\documentclass[reqno]{amsart}
\usepackage{amsmath,amsfonts,amssymb,amsthm,latexsym}
\usepackage{graphicx,subcaption}
\usepackage[hidelinks]{hyperref}
\numberwithin{equation}{section}
\usepackage[all,cmtip]{xy}

\newtheorem{theorem}{Theorem}[section]
\newtheorem{lemma}[theorem]{Lemma}
\newtheorem{corollary}[theorem]{Corollary}

\newtheorem{prop}[theorem]{Proposition}
\newtheorem{no}[theorem]{Notation}
\newtheorem{remark}[theorem]{Remark}

\theoremstyle{definition}
\newtheorem{definition}[theorem]{Definition}   

\renewcommand{\Im}{\operatorname{Im}}
\newcommand{\supp}{\operatorname{supp}}
\newcommand{\compconj}[1]{%
	\overline{#1}%
}
\renewcommand{\Im}{\operatorname{Im}}
\renewcommand{\Re}{\operatorname{Re}}
\newcommand{\tr}{\operatorname{tr}}

\begin{document}
	\title[Support of the Brown measure of a family]{Support of the Brown measure of a family of free multiplicative Brownian motions with non-negative initial condition}
	\author[S. Eaknipitsari]{Sorawit Eaknipitsari} 
	\address{University of Notre Dame\\
	Department of Mathematics\\
	255 Hurley Hall\\
	Notre Dame, IN, 46556\\U.S.A.} 
	\email{e.sorawit@gmail.com}
	\thanks{Eaknipitsari was supported by Development and Promotion of Science and Technology Talents Project (Royal Thai Government scholarship)}
	
	\author[B. Hall]{Brian C. Hall}
	\thanks{Hall was supported in part by a grant from the Simons Foundation}
	\address{University of Notre Dame\\
	Department of Mathematics\\
	255 Hurley Hall\\
	Notre Dame, IN, 46556\\U.S.A.} 
	\email{bhall@nd.edu}

	%----------classification, keywords, date
%	\subjclass{Primary ; Secondary }

	\keywords{Brown measure, free Brownian motion, Hamilton--Jacobi PDEs, random matrix theory}
	
	\date{\today}

	\begin{abstract}
	We consider a family $b_{s,\tau}$ of free multiplicative Brownian motions labeled by a real variance parameter $s$ and a complex covariance parameter 
	$\tau$. We then consider the element $xb_{s,\tau}$, where $x$ is non-negative and freely independent of $b_{s,\tau}$. Our goal is to identify the support of the Brown measure of $xb_{s,\tau}$. 
	
	In the case $\tau =s$, we identify a region $\Sigma_s$ such that the Brown measure is vanishing outside of $\overline{\Sigma}_s$ except possibly at the origin. For general values of $\tau$, we construct a map $f_{s-\tau}$ and define $D_{s,\tau}$ as the complement of $f_{s-\tau}(\overline{\Sigma}_s^c)$. Then the Brown measure is zero outside $D_{s,\tau}$ except possibly at the origin.
	
	The proof of these results is based on a two-stage PDE analysis, using one PDE (following the work of Driver, Hall, and Kemp) for the case $\tau=s$ and a different PDE (following the work of Hall and Ho) to deform the $\tau=s$ case to general values of $\tau$.
	\end{abstract}
	
	\maketitle
	\tableofcontents
	\section{Introduction}
	\subsection{Free multiplicative Brownian motions and their Brown measures}
	
In \cite{bianejfa}, Biane introduced the \textit{free multiplicative Brownian motion} $b_t$ as an element of a tracial von Neumann algebra. As conjectured by Biane and proved by Kemp \cite{kemp}, $b_t$ is the limit in $*$-distribution of the standard Brownian motion in the general linear group $GL(N;\mathbb C)$ as $N\rightarrow\infty$. (See \cite{banna} for a stronger version of Kemp's result.) For a fixed $t>0$, the element $b_t$ can be approximated by an element of the form 
\begin{equation}\label{approxBrownian}
b_t \sim \left(I+\sqrt{\frac{t}{k}}z_1\right)\dots \left(I+\sqrt{\frac{t}{k}}z_k\right),
\end{equation}
where $z_1,\dots,z_k$ are freely independent circular elements and $k$ is large. More precisely, $b_t$ is defined as the solution of a \emph{free} It\^o stochastic differential equation, as in Section 2.2 and then Theorem 1.14 in \cite{sevenAuthors} shows that (\ref{approxBrownian}) approximates this solution. 
	
	There is also a ``three-parameter'' generalization $b_{s,\tau}$ of $b_t$, labeled by a real variance parameter $s$ and a complex covariance parameter $\tau$. (The three parameters are $s$ and the real and imaginary parts of $\tau$.) The original free multiplicative Brownian motion $b_t$ corresponds to the case $s=\tau=t.$ The case in which $\tau=0$ gives Biane's free \textit{unitary} Brownian motion $u_s=b_{s,0}$.
	
		In the case that $\tau$ is real, the \textit{support} of the Brown measure of the $b_{s,\tau}$ was computed by Hall and Kemp \cite{hk}. Then Driver, Hall and Kemp \cite{dhk} computed the actual Brown measure of $b_t$ (not just its support). This result was then extended by Ho and Zhong \cite{hz}, who computed the Brown measure of $ub_t$, where $u$ is a unitary ``initial condition,'' assumed to be freely independent of $b_t$. Hall and Ho \cite{hho} then computed the Brown measure of $ub_{s,\tau}$ for arbitrary $s$ and $\tau$. 
		
		Finally, Demni and Hamdi \cite{dem} computed the support of the Brown measure of $pb_{s,0}$ where $p$ is a projection that is freely independent of $b_{s,0}$. Although Demni and Hamdi extend many of the techniques used in \cite{dhk,hz,hho} to their setting, the fact that the initial condition $p$ is not unitary causes difficult technical issues that prevent them from computing the actual Brown measure of $pb_{s,0}$. 
	
	In this paper, we study $xb_{s,\tau}$, where the initial condition $x$ is taken to be non-negative and freely independent of $b_{s,\tau}$. We will find a certain closed subset $D_{s,\tau}$ with the property that the Brown measure of $xb_{s,\tau}$ is zero outside $D_{s,\tau}$, except possibly at the origin.  Simulations and analogous results for other cases strongly suggest that the closed support of the Brown measure of $xb_{s,\tau}$ is precisely $D_{s,\tau}$ (or $D_{s,\tau}\cup \{0\}$).
	
	One important aspect of the problem is to understand how the domains $D_{s,\tau}$ vary with respect to $\tau$ with $s$ fixed. (Compare Definition 2.5 and Section 3 in \cite{hho} in the case of a unitary initial condition.) For each $s$ and $\tau$, we will construct a holomorphic map $f_{s-\tau}$ defined on the complement of $D_{s,s}$. We will show that this map is injective and tends to infinity at infinity. Then the \emph{complement} of $D_{s,\tau}$ will be the image of the \emph{complement} of $D_{s,s}$ under $f_{s-\tau}$. Thus, all the domains with a fixed value of $s$ can be related to the domain $D_{s,s}$ by means of $f_{s-\tau}$. It then follows that the topology of the complement of $D_{s,\tau}$ is the same for all $\tau$ with $s$ fixed. By contrast, Figure \ref{fig:first} shows that the topology of the complement of $D_{s,\tau}$ can change when $s$ changes (in this case, with $\tau=s$).

	When $\tau=0$ and $x$ is a projection, our result reduces to the one obtained by Demni and Hamdi. Thus, our work generalizes \cite{dem} by allowing arbitrary values of $\tau$ and arbitrary non-negative initial conditions. The difficulties in computing the Brown measure in the setting of Demni and Hamdi persist in our setting and we do not address that problem here. See Remark \ref{hard.remark} for an indication of why the case of a non-negative initial condition is harder than a unitary initial condition.
	
	\subsection{The support of $b_t$ with non-negative initial condition}\label{btOutline.sec}
	
	In this subsection, we briefly describe how our results are obtained in the case $\tau=s$. In the next subsection, we describe how the case of general $\tau$ is reduced to the case $\tau=s.$ Let $ \mu$ be the law (or spectral distribution) of the non-negative initial condition $x$. That is, $\mu$ is the unique probability measure on $[0, \infty)$ satisfying
	\begin{equation}\label{mudef}\int_{0}^{\infty}t^k \,d\mu(t) = \tr(x^k)\end{equation}
	for all $k = 0,1,2,\dots$, where $\tr$ is the trace on the relevant von Neumann algebra.
	
	We next define the regularized $\log$ potential function $S$ as
	\begin{align*}
		S(t,\lambda,\varepsilon) = \tr\left[\log\left((xb_t-\lambda)^\ast(xb_t -\lambda)+\varepsilon\right)\right], \quad \varepsilon > 0.		
	\end{align*}  
	and its limit as $\varepsilon \rightarrow 0^+$
	\[s_t(\lambda) = \lim_{\varepsilon \rightarrow 0^+} S(t,\lambda,\varepsilon).\]
	Then, following Brown \cite{brown} (see also Chapter 11 of the monograph of Mingo and Speicher \cite{mingo}), the Brown measure $\mu_t$ of $hb_t$ is the distributional Laplacian
	\[\mu_t = \frac{1}{4\pi}\Delta s_t(\lambda).\]
	According to Proposition \ref{limS.prop}, the function $s$ is in $L^1_{\mathrm{loc}}$ and is subharmonic.
	
	The function $S$ satisfies the following PDE, obtained similarly as in \cite{hz}, in logarithmic polar coordinates:
	\begin{equation}\label{firstPDE}
		\frac{\partial S}{\partial t} = \varepsilon\frac{\partial S}{\partial \varepsilon}\left(1+(|\lambda|^2-\varepsilon)\frac{\partial S}{\partial \varepsilon} - \frac{\partial S}{\partial \rho}\right), \quad \lambda = e^\rho e^{i\theta} = re^{i\theta}
	\end{equation}
	with initial condition
	\begin{equation}\label{initialCond}S(0,\lambda,\varepsilon) = \tr[\log(x-\lambda)^\ast(x-\lambda)+\varepsilon] = \int_0^{\infty} \log(|\xi-\lambda|^2 +\varepsilon) \,d\mu(\xi). \end{equation}
	
	Following the PDE method given by \cite{dhk}, we consider the following Hamiltonian, obtained by replacing each derivative on the right-hand side of \eqref{firstPDE} by a ``momentum'' variable, with an overall minus sign:
	\begin{equation}\label{Hdef}
		H(\rho,\theta,\varepsilon,p_\rho,p_\theta,p_\varepsilon) = -\varepsilon p_{\varepsilon}(1+(r^2 - \varepsilon)p_{\varepsilon} - p_{\rho}).
	\end{equation}
		We then consider Hamilton's equations, given as
	\begin{equation}\label{HamEq}\frac{d\rho}{dt} = \frac{\partial H}{\partial p_\rho},\quad \frac{dp_\rho}{dt} = -\frac{\partial H}{\partial \rho},\end{equation}
	and similarly for other pairs of variables.
	Given initial conditions for the ``position'' variables:
	\[\rho(0) = \rho_0, \quad r(0) = r_0, \quad \varepsilon(0) = \varepsilon_0,\]
	we take the initial conditions for the momentum variables to be
	\begin{align*}
		p_{\rho,0}(\lambda_0, \varepsilon_0) = \frac{\partial S(0,\lambda_0, \varepsilon_0)}{\partial \rho_0},\\ p_{\theta}(\lambda_0, \varepsilon_0) = \frac{\partial S(0,\lambda_0, \varepsilon_0)}{\partial \theta}, \\ p_{0}(\lambda_0, \varepsilon_0) = \frac{\partial S(0,\lambda_0, \varepsilon_0)}{\partial \varepsilon_0}.
	\end{align*}	
	
	\begin{no}
	In the above formulas, we follow \cite[Eq. (5.4)]{dhk} in using the expression $p_0$ (as opposed to $p_{\varepsilon,0}$) to denote the value of $p_{\varepsilon}$ at $t=0$. This notation is consistent the $k=0$ case of the notation $p_k$ introduced below in (\ref{pkdef}).
	\end{no}
	
	The solution $S$ to the PDE (\ref{firstPDE}) then satisfies the first Hamilton--Jacobi formula:
	\begin{equation}\label{pdecon}
		S(t,\lambda(t),\varepsilon(t)) = S(0,\lambda_0,\varepsilon_0) - H_0 t + \log(|\lambda(t)| )- \log(|\lambda_0|),
	\end{equation}
	\textit{provided} the solution to the Hamilton's equation exists up to time $t$. (See \cite[Eqs. (5.20) and (5.21)]{dhk}.)
	Since our aim is to compute
	\[s_t(\lambda) = S(t,\lambda,0),\] 
	we want to choose good initial values $\varepsilon_0$ and $\lambda_0$ so that 
	\begin{equation}\label{goal}\lambda(t) = \lambda \text{ and }\varepsilon(t) = 0.\end{equation}
	The next result says that we can achieve the goal in \eqref{goal} by taking $\varepsilon_0$ approaching 0 and taking $\lambda_0=\lambda$, \textit{provided} that the solution of Hamilton's equations exists up to time $t$.
	\begin{lemma}\label{smalleps.lem} 
	Assume that $\lambda_0$ is outside the support of $\mu$. Then in the limit as $\varepsilon_0\rightarrow 0$, we have $\varepsilon(t) \equiv 0$ and $\lambda(t) \equiv \lambda$, for as long as the solution to Hamilton's equations exists. 
	\end{lemma}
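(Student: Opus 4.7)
The plan is to identify the limiting solution explicitly at $\varepsilon_0=0$ and then invoke continuous dependence on initial data to justify the passage $\varepsilon_0\to 0^+$.

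First I would check that the initial data and the three initial momenta extend continuously to $\varepsilon_0 = 0$ whenever $\lambda_0\notin\supp(\mu)$. Since $|\xi-\lambda_0|\geq \mathrm{dist}(\lambda_0,\supp(\mu))>0$ on the support of $\mu$, the integrand in \eqref{initialCond} and its derivatives in $\rho_0,\theta_0,\varepsilon_0$ are uniformly bounded as $\varepsilon_0\to 0$; in particular,
\[
p_{0}(\lambda_0,0)=\int_0^\infty\frac{d\mu(\xi)}{|\xi-\lambda_0|^2}<\infty,
\]
and $p_{\rho,0}$, $p_\theta$ likewise remain bounded. Thus the full initial condition for Hamilton's equations has a well-defined limit.

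Next I would exploit the structural fact that the Hamiltonian \eqref{Hdef} carries an overall factor of $\varepsilon$. Differentiating explicitly gives
\[
\dot{\rho}=\frac{\partial H}{\partial p_\rho}=\varepsilon p_\varepsilon,\qquad \dot{\theta}=\frac{\partial H}{\partial p_\theta}=0,\qquad \dot{\varepsilon}=\frac{\partial H}{\partial p_\varepsilon}=-\varepsilon\bigl[1+2(r^2-\varepsilon)p_\varepsilon-p_\rho\bigr],
\]
and both $\partial H/\partial\rho$ and $\partial H/\partial\theta$ also vanish identically on $\{\varepsilon=0\}$. Hence, if one initializes the system with $\varepsilon_0=0$ and the momenta $p_{\rho,0}(\lambda_0,0),\,p_\theta(\lambda_0,0),\,p_0(\lambda_0,0)$ defined above, then
\[
\varepsilon(t)\equiv 0,\quad \rho(t)\equiv \rho_0,\quad \theta(t)\equiv \theta_0,\quad p_\rho(t)\equiv p_{\rho,0},\quad p_\theta(t)\equiv p_{\theta,0}
\]
is a solution, with $p_\varepsilon$ evolving by the scalar Bernoulli ODE $\dot{p_\varepsilon}=p_\varepsilon\bigl(1+r_0^2 p_\varepsilon-p_{\rho,0}\bigr)$. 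Since the right-hand side of Hamilton's equations is smooth, this is \emph{the} unique solution with $\varepsilon_0=0$; in particular $\lambda(t)=e^{\rho(t)}e^{i\theta(t)}\equiv \lambda_0=\lambda$.

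Finally I would conclude using standard continuous dependence of ODE flows on initial data: because the Hamilton vector field is $C^\infty$ and the initial data is continuous in $\varepsilon_0$ at $\varepsilon_0=0$, the trajectory with $\varepsilon_0>0$ converges uniformly, on every compact subinterval of the existence interval of the $\varepsilon_0=0$ trajectory, to the constant trajectory identified above. This yields $\varepsilon(t)\to 0$ and $\lambda(t)\to\lambda$, as claimed. The main obstacle I anticipate is making this limit uniform over the full maximal interval of existence; but since the only coordinate of the $\varepsilon_0=0$ trajectory that evolves nontrivially is the scalar $p_\varepsilon$, control reduces to that Bernoulli equation, whose solvability up to time $t$ is exactly the hypothesis that Hamilton's equations still have a solution at time $t$.
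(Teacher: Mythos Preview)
Your argument is correct and complete. The approach, however, differs from the paper's. You work directly with the Hamiltonian system at $\varepsilon_0=0$: you observe that the vector field carries an overall factor of $\varepsilon$ in every component except $\dot p_\varepsilon$, so the $\varepsilon_0=0$ trajectory is constant in $(\rho,\theta,\varepsilon,p_\rho,p_\theta)$, and then you invoke continuous dependence of ODE flows on initial data. The paper instead leans on the explicit constants of motion developed in Lemma~\ref{pepsilon formula}: from the closed-form identity $\varepsilon(t)=\varepsilon_0 p_0^2 e^{-Ct}/p_\varepsilon(t)^2$ one reads off $\varepsilon(t)\to 0$ (using that $p_\varepsilon$ stays positive), and the relation $\int_0^t \varepsilon p_\varepsilon\,ds=\log|\lambda(t)/\lambda_0|$ from \eqref{loglog} then forces $|\lambda(t)|\to|\lambda_0|$, with $\theta$ already constant.

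Your route is more self-contained and conceptual: it avoids the algebra of the constants of motion and would transfer unchanged to any Hamiltonian with an overall factor of $\varepsilon$. The paper's route reuses machinery that is being built anyway for the lifetime computation in Theorem~\ref{t*}, and it yields slightly more along the way (e.g., the explicit positivity $\varepsilon(t)>0$ for $\varepsilon_0>0$, needed to apply the Hamilton--Jacobi formula). Either argument suffices for the lemma as stated.
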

	The proof is given on p. \pageref{limitproof}. If the lemma applies, the Hamilton--Jacobi formula (\ref{pdecon}) becomes
	\begin{equation*} S(t,\lambda,0) = S(0, \lambda_0=\lambda,\varepsilon_0=0) - H_0 t. \end{equation*}
	Furthermore, when $\varepsilon_0=0$, we compute from (\ref{Hdef}) that $H_0=0$, so that we obtain
	\begin{equation}\label{limitHJ} S(t,\lambda,0) = S(0, \lambda,0). \end{equation}
	We emphasize, however, that this conclusion is valid only if the lifetime of the solution of Hamilton's equations is greater than $t$ when $\varepsilon_0 \rightarrow 0$.
	
	Now, we will compute that the limit of the lifetime of solutions to Hamilton's equation, in the limit when $\varepsilon_0 \rightarrow 0$, as
	\[T(\lambda) = \frac{1}{{\tilde p}_2 - {\tilde p}_0r^2}\log\left(\frac{{\tilde p}_2}{{\tilde p}_0r^2}\right)\]
	where
	\[{\tilde p}_k = \int_0^{\infty} \frac{\xi^k}{|\xi-\lambda|^2} \,d\mu(\xi).\] Thus, we define a domain $\Sigma_t$ as follows:
	\[\Sigma_t = \{\lambda \,|\, T(\lambda) < t\}.\]
	
	\begin{figure}[t]
		\begin{subfigure}{0.4\textwidth}
			\centering
			\includegraphics[height=1.6in]{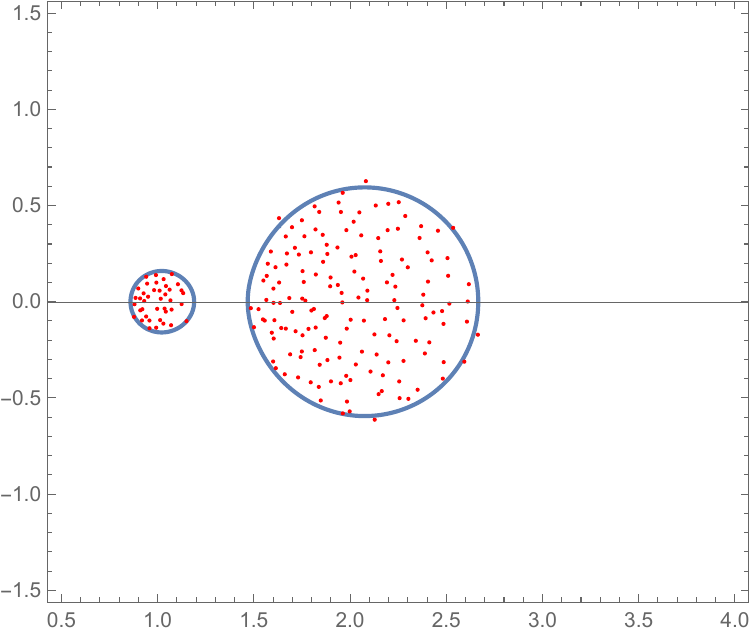}
			\caption{$t = 0.1$}
		\end{subfigure}
		\quad		\begin{subfigure}{0.4\textwidth}
			\centering
			\includegraphics[height=1.6in]{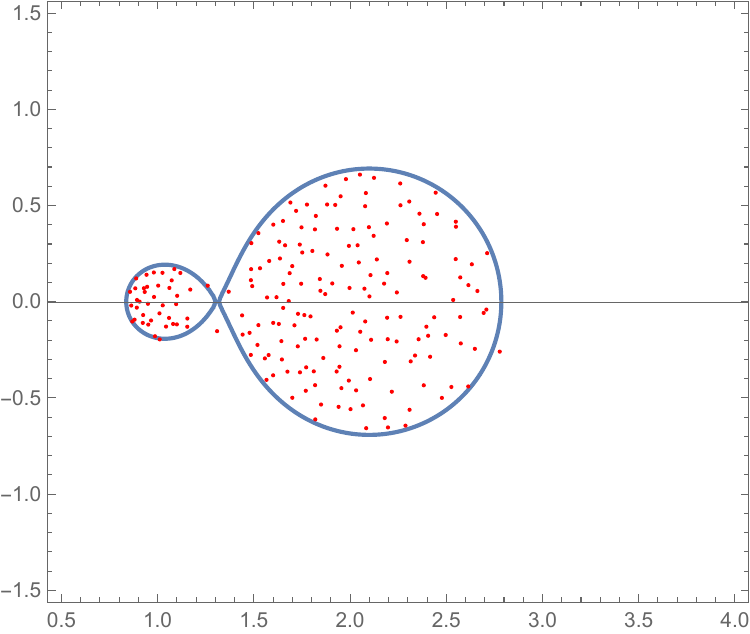}
			\caption{$t = 0.1319$}
		\end{subfigure}\\
		
		\begin{subfigure}{0.4\textwidth}
			\centering
			\includegraphics[height=1.6in]{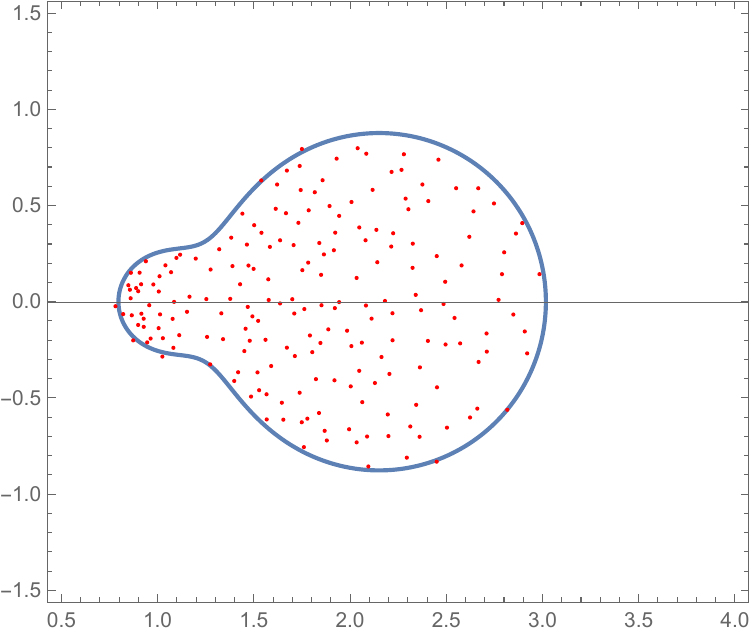}
			\caption{$t = 0.2$}
		\end{subfigure}
		\quad		\begin{subfigure}{0.4\textwidth}
			\centering
			\includegraphics[height=1.6in]{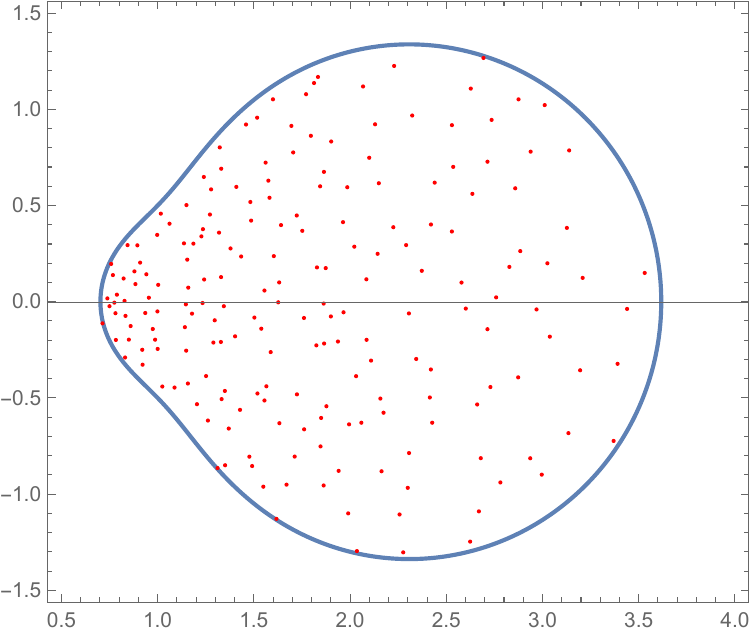}
			\caption{$t = 0.4$}
		\end{subfigure}
		
		\caption{The domain $\overline{\Sigma}_t$ with the eigenvalues (red dots) of a random matrix approximation to $xb_t$, in the case $\mu =\frac{1}{5}\delta_{1} +\frac{4}{5}\delta_2$}
		\label{fig:first}
	\end{figure}
	
	If we insert the initial condition (\ref{initialCond}) (at $\varepsilon=0$) into the formula (\ref{limitHJ}), we obtain the following result, whose proof has been outlined above.
	\begin{theorem}[Free multiplicative Brownian motion with non-negative initial condition]
		For all $(t,\lambda)$ with $\lambda$ outside $\compconj{\Sigma}_t$, we have
		$$ s_t(\lambda) := \lim_{\varepsilon_0 \rightarrow 0^+} S(t,\lambda,\varepsilon)= \int_0^{\infty}\log|\xi-\lambda|^2\,d\mu(\xi).$$
		Since, as we will show, the closed support of $\mu$ is contained in $\overline{\Sigma}_t\cup \{0\}$, it follows that the Brown measure $\mu_t$ is 
		zero outside of $\overline{\Sigma}_t$, except possibly at the origin. 
	\end{theorem}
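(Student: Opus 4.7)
The plan is to apply the Hamilton--Jacobi formula \eqref{pdecon} with the specific initial data $\lambda_0 = \lambda$ and $\varepsilon_0 \to 0^+$. Since $\lambda \notin \overline{\Sigma}_t$ translates (by continuity of $T$) into $T(\lambda) > t$, and $T(\lambda)$ has been identified as the limiting lifetime of the Hamilton flow as $\varepsilon_0 \to 0$, the solution to \eqref{HamEq} exists on the interval $[0,t]$ in this limit. Lemma \ref{smalleps.lem} then applies and gives $\lambda(t) \equiv \lambda$ and $\varepsilon(t) \equiv 0$ throughout $[0,t]$; for $\lambda \neq 0$ the announced support-containment guarantees $\lambda \notin \supp(\mu)$, as required by the hypothesis of the lemma.

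Next, I would collapse the Hamilton--Jacobi identity. The Hamiltonian \eqref{Hdef} has an overall factor of $\varepsilon$, so setting $\varepsilon_0 = 0$ gives $H_0 = 0$; and since $\lambda(t) = \lambda_0 = \lambda$, the logarithmic correction $\log|\lambda(t)| - \log|\lambda_0|$ vanishes. Consequently \eqref{pdecon} reduces to $s_t(\lambda) = S(0,\lambda,0)$, and substituting \eqref{initialCond} evaluated at $\varepsilon = 0$ yields
\[ s_t(\lambda) = \int_0^\infty \log|\xi-\lambda|^2 \, d\mu(\xi), \]
which is the first conclusion of the theorem.

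For the Brown measure assertion, observe that this integral is harmonic in $\lambda$ off $\supp(\mu)$, as it is a linear combination of the harmonic kernels $\log|\xi - \lambda|^2$. Granting the announced inclusion $\supp(\mu) \subset \overline{\Sigma}_t \cup \{0\}$, the function $s_t$ is harmonic on the open set $\mathbb{C} \setminus (\overline{\Sigma}_t \cup \{0\})$, so $\mu_t = \tfrac{1}{4\pi}\Delta s_t$ vanishes there.

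I expect the main obstacle to be the auxiliary support-containment statement $\supp(\mu)\setminus\{0\} \subset \overline{\Sigma}_t$, which is what connects the analytic formula to the measure-theoretic conclusion. Establishing it requires a careful analysis of $T(\lambda)$ as $\lambda$ approaches a positive point $\xi_0 \in \supp(\mu)$: both $p_0$ and $p_2$ diverge while $p_2/p_0 \to r^2$, so
\[ T(\lambda) = \frac{1}{p_2 - p_0 r^2}\log\!\frac{p_2}{p_0 r^2} \]
presents a $\tfrac{0}{\infty}$ indeterminate form, and showing $T(\lambda) \to 0$ requires balancing the rates of degeneration. A secondary but related point, requiring its own argument, is the identification of $T(\lambda)$ with the limiting lifetime of the Hamilton flow; I would derive this by linearizing the $\varepsilon$-equation around $\varepsilon_0 = 0$ and locating the first time at which the resulting ODE develops a singularity.
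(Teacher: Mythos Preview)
Your outline matches the paper's strategy, and your identification of the two technical obstacles (support containment and lifetime identification) is accurate---the paper proves these as Corollary~\ref{suppoutside} (via Zhong's Lemma~\ref{pz4.4}) and Theorem~\ref{t*}/Proposition~\ref{zerooutside}, respectively. However, there is a genuine gap that you have not flagged. The Hamilton--Jacobi formula~\eqref{pdecon} with $\lambda_0=\lambda$ and $\varepsilon_0>0$ evaluates $S$ at the point $(t,\lambda(t),\varepsilon(t))$, and for $\varepsilon_0>0$ one has $\lambda(t)\neq\lambda$. Letting $\varepsilon_0\to 0^+$ therefore yields a limit of $S$ along an oblique curve $(\lambda(t),\varepsilon(t))\to(\lambda,0)$, which is \emph{not} a~priori the vertical limit $\lim_{\varepsilon\to 0^+}S(t,\lambda,\varepsilon)$ that defines $s_t(\lambda)$. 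Since $S(t,\cdot,\cdot)$ is only defined for $\varepsilon>0$ and nothing is yet known about its joint boundary behavior at $\varepsilon=0$, identifying the two limits requires an argument. The paper closes this gap (proof of Theorem~\ref{main1}) by applying the inverse function theorem to the map $U_t:(\lambda_0,\varepsilon_0)\mapsto(\lambda(t),\varepsilon(t))$ at $(\lambda,0)$ (Lemma~\ref{jacobian}): once $U_t$ is shown to be a local diffeomorphism there, one writes $S(t,\lambda,\varepsilon)=\mathrm{HJ}(t,U_t^{-1}(\lambda,\varepsilon))$ exactly for each small $\varepsilon>0$, and since the right-hand side extends smoothly across $\varepsilon=0$, the limit $s_t(\lambda)$ can be read off directly.

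A second, smaller issue: your claim that $\lambda\notin\overline\Sigma_t$ implies $T(\lambda)>t$ ``by continuity'' is not justified on the positive real axis, where continuity of $T$ is delicate and the boundary characterization of Proposition~\ref{boundary} need not apply. The paper first proves the formula for $\lambda\notin\overline\Sigma_t\cup[0,\infty)$ (where the strict inequality $T(\lambda)>t$ does hold), and then extends to all of $(\overline\Sigma_t)^c$ by noting that $[0,\infty)$ has Lebesgue measure zero and so is irrelevant when computing the distributional Laplacian.
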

	\noindent See Figure \ref{fig:first} for the domain $\overline{\Sigma}_t$ plotted with the eigenvalues (red dots) of a random matrix approximation to $xb_t$, in the case $\mu =\frac{1}{5}\delta_{1} +\frac{4}{5}\delta_2$.

\subsection{The case of arbitrary $\tau$}	

We now consider a family $b_{s,\tau}$ of free multiplicative Brownian motions, labeled by a positive real number $s$ and a complex number $\tau$ satisfying
\[
|\tau-s|\leq s.
\]
These were introduced by Ho \cite{hoSBT} when $\tau$ is real and by Hall and Ho \cite{hho} when $\tau$ is complex. (See Section \ref{brownians.sec} for details.) When $\tau=s$, the Brownian motion $b_{s,s}$ has the same $*$-distribution as $b_s$ and when $\tau=0$, the Brownian motion $b_{s,0}$ has the same $*$-distribution as Biane's free unitary Brownian motion $u_s$.

When $\tau$ is real, the support of the Brown measure of $b_{s,\tau}$ was computed by Hall and Kemp \cite{hk}, using the large-$N$ Segal--Bargmann transform developed by Driver--Hall--Kemp \cite{dhk_largeN} and Ho \cite{hoSBT}. The Brown measure of $ub_{s,\tau}$ when $u$ is unitary and freely independent of $b_{s,\tau}$ was computed in \cite{hho}. In this paper, we determine the support of the Brown measure of $xb_{s,\tau}$ when $x$ is non-negative and freely independent of $b_{s,\tau}$. 

To attack the problem for arbitrary $\tau$, we will show that the regularized log potential of $xb_{s,\tau}$ satisfies a PDE with respect to $\tau$ with $s$ fixed. We solve this PDE using as our initial condition the case $\tau=s$---which we have already analyzed, as in the previous subsection. To solve the PDE, we again use the Hamilton--Jacobi method and we again put $\varepsilon_0$ equal to 0. With $\varepsilon_0=0$, we again find that $\varepsilon(t)$ is identically zero---but this time $\lambda(t)$ is not constant. Rather, for $\lambda_0$ outside $\overline{\Sigma}_t$, we find that with $\varepsilon_0=0$, we have
\begin{equation}
\lambda(t)=f_{s-\tau}(\lambda_0)
\end{equation}
where $f_{s-\tau}$ is a holomorphic function given by
	\[f_{s-\tau}(z) = z\exp\left[\frac{s-\tau}{2}\int_0^{\infty} \frac{\xi+z}{\xi-z}\,d\mu(\xi)\right].\]

The Hamilton--Jacobi method will then give a formula for the log potential of the Brown measure of $xb_{s,\tau}$, valid at any nonzero point $\lambda$ of the form 
$\lambda=f_{s-\tau}(\lambda_0)$, with $\lambda_0$ outside $\overline{\Sigma}_t$. This formula will show that the Brown measure of $xb_{s,\tau}$ is zero near $\lambda$. 

We summarize the preceding discussion with the following definition and theorem.
	\begin{definition}
		For all $s>0$ and $\tau \in \mathbb{C}$ such that $|\tau-s| \leq s$, we define a closed domain $D_{s,\tau}$ characterized by
		\[D_{s,\tau}^c = f_{s-\tau}(\overline{\Sigma}_s^c).\]
	\end{definition}

That is to say, the complement of $D_{s,\tau}$ is the image of the complement of $\overline{\Sigma}_s$ under $f_{s-\tau}$. See Figure \ref{fig:second}. When $\tau=s$, we have that $f_{s-\tau}(z)=z$, so that $D_{s,s}=\overline\Sigma_s$.

	\begin{theorem}
		For all $s>0$ and $\tau \in \mathbb{C}$ such that $|\tau-s| \leq s$, the Brown measure of $xb_{s,\tau}$ is zero outside $D_{s,\tau}$, except possibly at the origin.
	\end{theorem}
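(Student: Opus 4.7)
The plan is to mirror the Hamilton--Jacobi strategy of Section \ref{btOutline.sec}, but now using a deformation in $\tau$ with $s$ held fixed; the previous theorem supplies the initial data at $\tau=s$. First I would derive a first-order PDE for the regularized log potential
\[
S(s,\tau,\lambda,\varepsilon)=\tr\bigl[\log\bigl((xb_{s,\tau}-\lambda)^\ast(xb_{s,\tau}-\lambda)+\varepsilon\bigr)\bigr]
\]
with respect to $\tau$ (and its conjugate), along the lines of the deformation PDE obtained by Hall and Ho \cite{hho} in the unitary-initial-condition case. One then parameterizes a path in the disk $\{|\tau-s|\leq s\}$ from the basepoint $s$ to the desired target value of $\tau$ and flows along it. The initial condition at $\tau=s$ is $s_s(\lambda)=\int_0^\infty \log|\xi-\lambda|^2\,d\mu(\xi)$ on $\overline{\Sigma}_s^c$, which is harmonic in $\lambda$ off the support of $\mu$.

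Next I would convert this PDE into Hamilton's equations by the usual recipe, introducing momentum variables in place of each spatial derivative of $S$. The initial momenta are read off from $s_s$; in particular $p_0:=\partial S/\partial\varepsilon|_{\varepsilon=0}$ evaluates to the finite quantity $\int_0^\infty |\xi-\lambda_0|^{-2}\,d\mu(\xi)$ for $\lambda_0\notin\supp\mu$. The key dynamical claims, parallel to Lemma \ref{smalleps.lem} and the discussion after it, are (i) with $\varepsilon_0=0$ one has $\varepsilon(\tau)\equiv 0$ along the characteristic, so the characteristic stays on the locus $\varepsilon=0$; and (ii) once $\varepsilon$ is frozen at zero, the complex ODE for $\lambda$ reduces to an autonomous equation of the form $d\lambda/d\tau=-\tfrac{1}{2}\lambda\int_0^\infty \tfrac{\xi+\lambda}{\xi-\lambda}\,d\mu(\xi)$, whose solution with $\lambda(s)=\lambda_0$ is precisely $\lambda(\tau)=f_{s-\tau}(\lambda_0)$. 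Both claims should follow by direct inspection of the Hamiltonian once the PDE is written out, together with the fact that $\lambda_0\in\overline{\Sigma}_s^c$ keeps the relevant integrals finite along the characteristic.

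Given these facts, the Hamilton--Jacobi formula---combined with the vanishing of the Hamiltonian at $\varepsilon_0=0$---collapses to
\[
S(s,\tau,\lambda,0)=S(s,s,\lambda_0,0)=\int_0^\infty\log|\xi-\lambda_0|^2\,d\mu(\xi),\qquad \lambda=f_{s-\tau}(\lambda_0).
\]
Because $f_{s-\tau}$ is holomorphic and, as asserted in the introduction, injective with holomorphic inverse on $\overline{\Sigma}_s^c$, the composite $\lambda\mapsto\int_0^\infty\log|\xi-f_{s-\tau}^{-1}(\lambda)|^2\,d\mu(\xi)$ is harmonic in $\lambda$ throughout $D_{s,\tau}^c\setminus\{0\}$. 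Hence $\Delta s_{s,\tau}\equiv 0$ there, and the Brown measure $\tfrac{1}{4\pi}\Delta s_{s,\tau}$ vanishes outside $D_{s,\tau}$, except possibly at the origin.

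The main obstacle, as in the $\tau=s$ case, is controlling the \emph{lifetime} of the Hamiltonian flow after the limit $\varepsilon_0\to 0^+$ has been taken: one has to confirm that for $\lambda_0$ outside $\overline{\Sigma}_s$ the characteristic persists all the way from $\tau=s$ to the target $\tau$ without blowing up, and justify that the limiting procedure commutes with the Hamilton--Jacobi identity. A secondary technical burden is the rigorous proof that $f_{s-\tau}$ is injective on $\overline{\Sigma}_s^c$ and proper at infinity, so that $D_{s,\tau}^c=f_{s-\tau}(\overline{\Sigma}_s^c)$ is genuinely the complement of a closed set on which the harmonicity conclusion transports cleanly.
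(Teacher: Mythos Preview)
Your overall plan---the $\tau$-deformation PDE from \cite{hho}, Hamilton--Jacobi analysis, and the limit $\varepsilon_0\to 0$ yielding $\lambda(\tau)=f_{s-\tau}(\lambda_0)$---matches the paper's strategy. But there is a genuine error in the key step. You assert that the Hamiltonian vanishes at $\varepsilon_0=0$, so that the first Hamilton--Jacobi formula collapses to $S(s,\tau,\lambda,0)=S(s,s,\lambda_0,0)$. This is false: for the $\tau$-PDE the Hamiltonian is
\[
H=-\tfrac{1}{8}\bigl[1-(1-\varepsilon p_\varepsilon-2\lambda p_\lambda)^2\bigr],
\]
and at $\varepsilon_0=0$ one has $H_0=-\tfrac{1}{8}\bigl[1-(1-2\lambda_0 p_{\lambda,0})^2\bigr]$, which does \emph{not} vanish (unlike the $t$-PDE in Section~\ref{sequalstau.sec}, where $H_0=-\varepsilon_0 p_0 p_2$). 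The correct first Hamilton--Jacobi formula (\ref{HJ2}) at $\varepsilon_0=0$ therefore has extra terms $2\Re[(\tau-s)H_0]+\Re[(\tau-s)\lambda_0 p_{\lambda,0}]$. Your displayed identity is wrong as stated. (Relatedly, your ``autonomous ODE'' for $\lambda$ is also misphrased: the exponent in (\ref{4.6}) is determined by the \emph{initial} data, since $\lambda p_\lambda$ is a constant of motion; the right-hand side should involve $\lambda_0$, not the running $\lambda$.)

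The error is repairable: those extra terms are real parts of holomorphic functions of $\lambda_0=f_{s-\tau}^{-1}(\lambda)$, hence harmonic in $\lambda$, so your harmonicity conclusion survives once the formula is corrected. The paper sidesteps the issue entirely by working instead with the \emph{second} Hamilton--Jacobi formula (\ref{spz}) for $\partial S/\partial\lambda$, obtaining directly a holomorphic expression (Proposition~\ref{main}); this is cleaner and also feeds the injectivity proof. On that last point, note the logical order: the paper first establishes the Brown-measure vanishing near points with $f_{s-\tau}'(\lambda_0)\neq 0$ (handling the isolated zeros of $f_{s-\tau}'$ separately via the open mapping theorem, Proposition~\ref{brownzero.prop}), and only \emph{afterwards} proves injectivity of $f_{s-\tau}$ using the second Hamilton--Jacobi formula (Proposition~\ref{finject.prop}). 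Your proposal assumes injectivity up front to run the harmonicity argument, which inverts this dependence; you would need either an independent injectivity proof or to localize your argument as the paper does.
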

	
	When the origin is not in $D_{s,\tau}$, we will, in addition, show that the mass of the Brown measure at the origin equals the mass of the measure $\mu$ at the origin.	
	
	\begin{figure}[t]
		\begin{subfigure}{0.4\textwidth}
			\centering
			\includegraphics[height=1.7in]{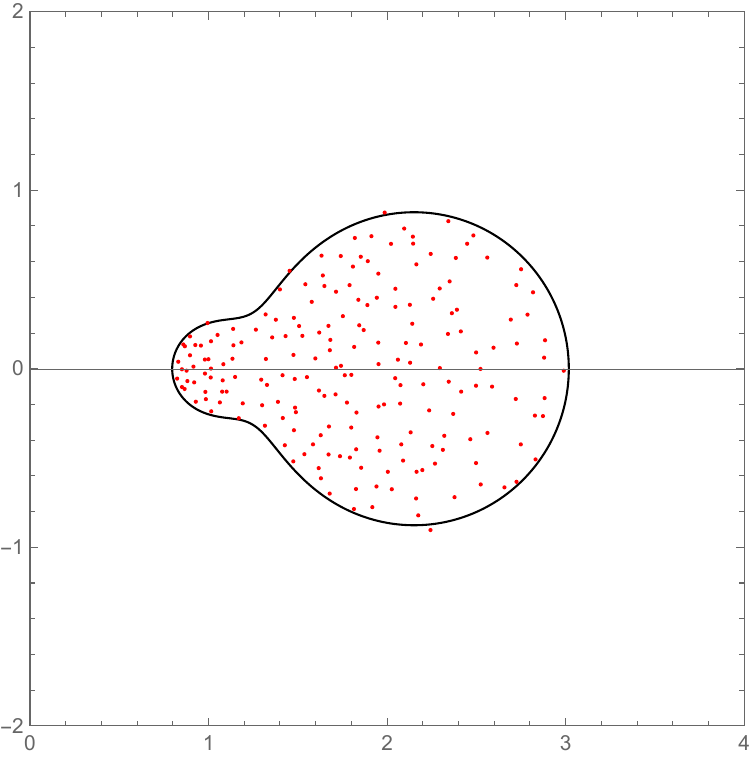}
			\caption{$\tau =s$}
		\end{subfigure}
		\quad		\begin{subfigure}{0.4\textwidth}
			\centering
			\includegraphics[height=1.7in]{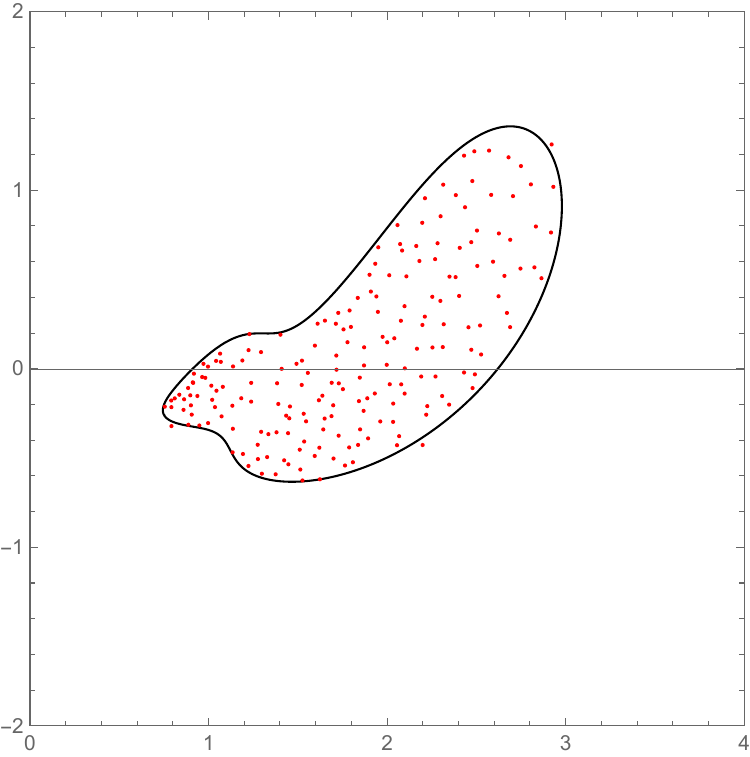}
			\caption{$\tau =s + 0.1i$}
		\end{subfigure}\\
		
		\begin{subfigure}{0.4\textwidth}
			\centering
			\includegraphics[height=1.7in]{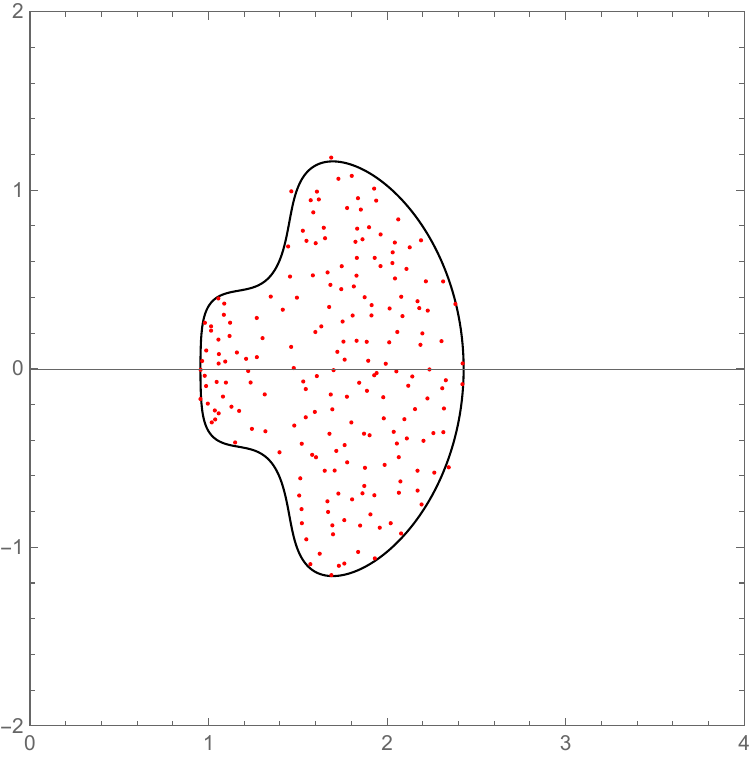}
			\caption{$\tau =s-0.1$}
		\end{subfigure}
		\quad		\begin{subfigure}{0.4\textwidth}
			\centering
			\includegraphics[height=1.7in]{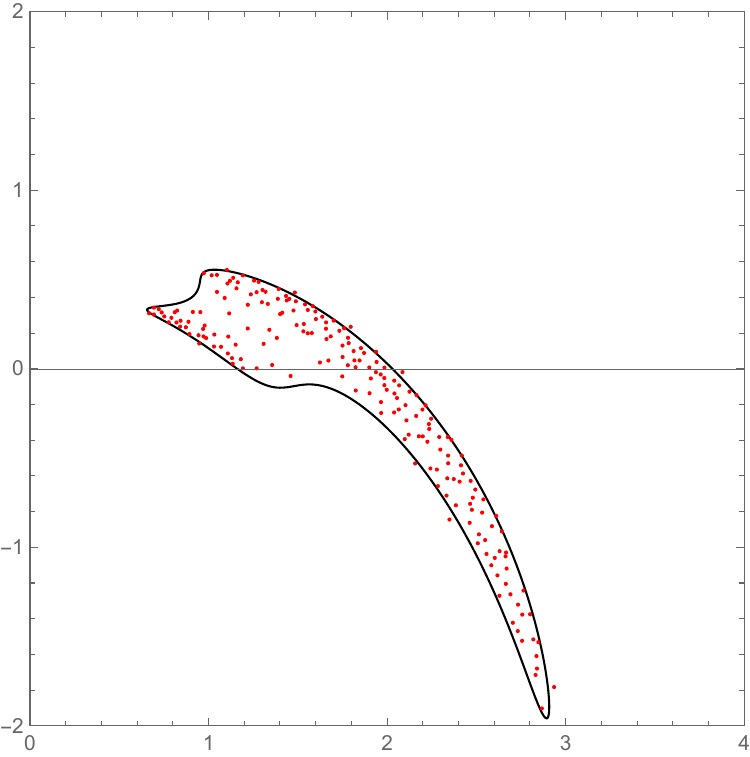}
			\caption{$\tau =s-0.19i$}
		\end{subfigure}
		
		\caption{The domain $D_{s,\tau}$ along with the eigenvalues (red dots) of a random matrix approximation to $xb_{s,\tau}$, with $s=0.2$ and $\mu =\frac{1}{5}\delta_{1} +\frac{4}{5}\delta_2$.}
		\label{fig:second}
	\end{figure}
	
	\begin{remark}
	Although this paper uses the PDE method introduced in \cite{dhk}, one could attempt to follow the method of Hall and Kemp in \cite{hk}, which computes the support of the Brown measure of $b_{s,\tau}$ (in the case $\tau$ is real). The paper \cite{hk} makes use of the free Segal--Bargmann transform introduced by Biane in \cite{bianejfa} and extended by Ho \cite{hoSBT}. This transform is the large-$N$ limit of the Segal--Bargmann transform of the second author \cite{hall1} for the unitary group $U(N)$. (See also \cite{newform}, \cite{dhk_largeN}, and \cite{chan}.)
	
	 One could then attempt to incorporate the non-negative element $x$ into the analysis of \cite{hk}. This would require extending the results of \cite{hall1} and \cite{bianejfa,hoSBT} to handle arbitrary (not necessarily unitary) initial conditions. Even if this extension were successful, one would still have to understand the support of the Brown measure of $xu_t$, where $u_t$ is the free unitary Brownian motion, as the starting point for the analysis. But the only known method for computing this support is the PDE method, either in the form used in \cite{dem} or in the form used in the present paper. At that point, it makes more sense to simply use the PDE method throughout. 
	\end{remark}
	
	\section{Preliminaries}
	
	\subsection{Free Probability}
	A \textit{tracial von Neumann algebra} is a pair $(\mathcal{A},\tr)$, where $\mathcal{A}$ is a von Neumann algebra and $\tr:\mathcal{A}\rightarrow\mathbb C$ is a faithful, normal, tracial state on $\mathcal{A}$. Here ``tracial'' means that $\tr(ab)=\tr(ba)$, ``faithful'' means that $\tr(a^*a)>0$ for all nonzero $a$, and ``normal'' means that $\tr$ is continuous with respect to the weak operator topology.	The elements in $\mathcal{A}$ are called (noncommutative) random variables.
	
	Unital $\ast$-subalgebras $\mathcal{A}_1,\dots,\mathcal{A}_n \subset \mathcal{A}$ are said to be \emph{freely independent} if given any $i_1,\dots, i_m \in \{1,\dots,n\}$ with $i_k \neq i_{k+1}$ and $a_{i_k} \in \mathcal{A}_{i_k}$ such that $\tr(a_{i_k}) = 0$ for all $1\leq k \leq m$, we have $\tr(a_{i_1}\cdots a_{i_m}) =0.$ Moreover, random variables $a_1,\dots,a_m$ are said to be freely independent if the unital $\ast$-subalgebras generated by them are freely independent.
	
	For any self-adjoint random variable $a\in \mathcal{A}$, the law or the \emph{distribution} of $a$ is the unique compactly supported probability measure on $\mathbb{R}$ such that for any bounded continuous function $f$ on $\mathbb{R}$, we have 
	\begin{equation}\label{lawdef} \int f\, d\mu = \tr(f(a)).\end{equation}

	\subsection{Free Brownian motions}\label{brownians.sec}
	In free probability, the semicircular law plays a role similar to the Gaussian distribution in classical probability. The semicircular law $\sigma_t$ with variance $t$ is the probability measure supported in $[-2\sqrt{t},2\sqrt{t}]$ with density there given by
	$$ d \sigma_t(\xi) = \frac{1}{2\pi t}\sqrt{4t-\xi^2}.$$
	\begin{definition}
		A \emph{free semicircular Brownian motion} $s_t$ in a tracial von Neumann algebra $(\mathcal{A},\tr)$ is a weakly continuous free stochastic process $(s_t)_{t\geq 0}$ with freely independent increments and such that the law of $s_{t_2}-s_{t_1}$ is semicircular with variance $t_2-t_1$ for all $0<t_1<t_2$.  A \emph{free circular Brownian motion} $c_t$ has the form $\frac{1}{\sqrt{2}}(s_t+is_t')$ where $s_t$ and $s_t'$ are two freely independent free semicircular Brownian motions.
	\end{definition}
	\begin{definition}\label{def b_t}
		The \emph{free multiplicative Brownian motion} $b_t$ is the solution of the free It\^o stochastic differential equation
		$$ db_t =b_t\,dc_t, \quad b_0 =I,$$
		where $c_t$ is a free circular Brownian motion.
	
	\end{definition}
	We refer to work of Biane and Speicher \cite{BianeSpeicher} and Nikitopoulos \cite{vaki} for information about free stochastic calculus and to \cite[Section 4.2.1]{bianejfa} for information about the free multiplicative Brownian motion (denoted there as $\Lambda_t$). According to \cite{bianejfa}, $b_t$ is invertible for all $t$. Then the \emph{right} increments of $b_t$ are freely independent. That is for every $0<t_1 <\dots<t_n$ in $[0, \infty)$, the random variables
	$$b_{t_1},b_{t_1}^{-1}b_{t_2}, \dots, b_{t_{n-1}}^{-1}b_{t_n}$$
	are freely independent. 
	
	Now, to define \emph{free multiplicative $(s,\tau)$-Brownian motion}, we introduce a \emph{rotated elliptic element} as follows.
	
	\begin{definition}
		A \emph{rotated elliptic element} is an element $Z$ of the following form
		\[ Z = e^{i\theta}\left(aX+ibY\right)\]
		where $X$ and $Y$ are freely independent semicircular elements, $a,b,$ and $\theta$ are real numbers, and we assume $a$ and $b$ are not both zero.
	\end{definition}
	
	As in Section 2.1 in \cite{hho}, we then parameterize rotated elliptic elements by two parameters: a positive variance parameter $s$ and a complex covariance parameter $\tau$ defined by 
	\begin{align*}
		s &= \tr[Z^\ast Z]\\
		\tau &= \tr[Z^\ast Z] - \tr[Z^2].
	\end{align*}
	By applying the Cauchy--Schwarz inequality to the inner product $\tr(A^\ast B)$ we see that any rotated elliptic element satisfies
	\begin{equation}\label{tauIneq}|\tau -s| \leq s.\end{equation}
	Conversely, if $s$ and $\tau$ satisfy (\ref{tauIneq}), we can construct a rotated elliptic element with those parameters by choosing $a, b$ and $\theta$ as
	\begin{align*}
		a &= \sqrt{\frac{1}{2}(s+|\tau-s|)}\\
		b &= \sqrt{\frac{1}{2}(s-|\tau-s|)}\\
		\theta &= \frac{1}{2}\arg(s-\tau).
	\end{align*}
		Note that if $\tau = s$, then we have $a=b$ and $Z$ is a circular element with variance $s$, having $\ast$-distribution independent of $\theta$.

		A \emph{free additive $(s,\tau)$-Brownian motion} is a continuous process $w_{s,\tau}(r)$ with $w_{s,\tau}(0) = 0$ having freely independent increments such that for all $r_2 >r_1$,
		\[\frac{w_{s,\tau}(r_2)-w_{s,\tau}(r_1)}{\sqrt{r_2-r_1}}\]
		is a rotated elliptic element with parameter $s$ and $\tau$. We can construct such an element as
		\[ w_{s,\tau}(r)= e^{i\theta}(aX_r+ibY_r)\]
		where $X_r$ and $Y_r$ are freely independent semicircular Brownian motion and $a,b,$ and $\theta$ are chosen as above.
		
		\begin{definition}\label{def b_st}
		A \emph{free multiplicative $(s,\tau)$-Brownian motion} $b_{s,\tau}(r)$ is the solution of the free stochastic differential equation 
	\begin{equation}\label{sde}
		db_{s,\tau}(r)= b_{s,\tau}(r)\left(i\, dw_{s,\tau}(r)-\frac{1}{2}(s-\tau)\,dr\right),
	\end{equation}
		with $$b_{s,\tau}(0) = 1.$$ 
		\end{definition}
		The $dr$ term in (\ref{sde}) is an It\^o correction. 
		Since $w_{s,\tau}(r)$ and $w_{rs,r\tau}(1)$ have the same $\ast$-distribution, it follows that  $b_{s,\tau}(r)$ and $b_{rs,r\tau}(1)$ also have the same $\ast$-distribution. Thus, without loss of generality, we may assume that $r=1$ and use the notation
		$$b_{s,\tau}:= b_{s,\tau}(1).$$
	
	When $\tau=s$, the It\^o correction vanishes and we find that 
		\[
		b_{s,s}=b_s.
		\]
	Furthermore, when $\tau =0$ we have that $a = s, b=0=\theta$ and $w_{s,0} = sX_r$. Then (\ref{sde}) becomes
	\[db_{s,0}(r)= sb_{s,0}(r)\left(i\, dX_r-\frac{1}{2}\,dr\right).\]
	This equation is the same SDE for the free unitary Brownian motion $U_s$ considered by Biane in Section 2.3 of \cite{bia}. Therefore we can identify $b_{s,0}$ with $U_s$ in \cite{bia}.
	\begin{remark}
	According to Proposition 6.10 in \cite{banna}, the $*$-distribution of $b_{s,\tau}$ is unchanged if we reverse the order of the factors on the right-hand side of (\ref{sde}), that is, putting the increments on the left instead of the right of $b_{s,\tau}$. This result is proved by using a matrix approximation to $b_{s,\tau}$ and appealing to a result of Driver \cite[Theorem 2.7]{driverSBT}. So far as we know, a general proof of this result in the free setting has not appeared. In the case $\tau=s$, however, the result follows from \cite[Theorem 1.14]{sevenAuthors}, using a discrete-time approximation to the SDE defining $b_{s,s}=b_s$.
	\end{remark}
		\subsection{The Brown measure}
	For a normal random variable $x \in \mathcal{A}$, we can define the law or distribution of $x$ as a compactly supported probability measure on the plane as follows. The spectral theorem (e.g., \cite[Section 10.3]{qmbook}) associates to $x$ a unique projection-valued measure $\nu^x$, supported on the spectrum of $x$, such that
	$$ x = \int \lambda\, d\nu^x(\lambda).$$
	Then the law $\mu_x$ of $x$ can be defined as 
	$$\mu_x(A) = \tr(\nu^x(A)),$$
	for each Borel set $A$.
	
	If, however, $x$ is not normal, the spectral theorem does not apply. Nevertheless, a candidate for the distribution of a non-normal operator was introduced by Brown \cite{brown}.  For an operator $a$, we use the standard notation $|a|$ for the non-negative square root of $a^*a$.
	\begin{definition}
	For any $x\in\mathcal A$, we define a function $S:\mathbb C\times (0, \infty)$ by
	\begin{equation*}
	S(\lambda,\varepsilon)=\operatorname{tr}[\log(|x-\lambda|^2+\varepsilon)]
	\end{equation*}
	and a function $s:\mathbb C\rightarrow [-\infty, \infty)$ by
	\begin{equation*}
	s(\lambda)=\lim_{\varepsilon\rightarrow 0^+}S(\lambda,\varepsilon).
	\end{equation*}
	\end{definition}
	
	The following result explains the sense in which the limit defining $s$ should be understood. Although it is possible the $L^1_{\mathrm{loc}}$ convergence is known, we have not seen such a result in the literature. 
	
	\begin{prop}\label{limS.prop}
Let $x$ be an element of a tracial von Neumann algebra and define a function
$S:\mathbb{C}\times(0, \infty)\rightarrow\mathbb{R}$ by%
\[
S(\lambda,\varepsilon)=\mathrm{tr}[\log(\left\vert x-\lambda\right\vert
^{2}+\varepsilon)].
\]
Then $S(\lambda,\varepsilon)$ decreases as $\varepsilon$ decreases, so that
\begin{equation}
s(\lambda)=\lim_{\varepsilon\rightarrow0^{+}}S(\lambda,\varepsilon
)\label{sLim}%
\end{equation}
exists, possibly with the value $-\infty.$ Then $s$ is in $L^1_{\mathrm{loc}}$ and is a subharmonic function. Furthermore, the convergence in (\ref{sLim}) is $L_{\mathrm{loc}}^{1}$ and, therefore,
in the distribution sense.
\end{prop}
The proof will be given after the following definition of the Brown measure; see Section 11.5 of the monograph of Mingo and Speicher \cite{mingo}.
\begin{definition}\label{brown.def}
The \textbf{Brown measure} of an element $x\in\mathcal A$ is defined as the measure $\mu$ computed as
\begin{equation}\label{browndef}
\mu=\frac{1}{4\pi}\Delta s(\lambda),
\end{equation}
where $\Delta$ is the Laplacian in the distribution sense. 
\end{definition}

\begin{remark}
Note that Definition \ref{brown.def} does not, by itself, guarantee that $s$ is the log potential of $\mu$---that is, the convolution of $\mu$ with the function $\log(|\lambda|^2)$. Rather, (\ref{browndef}) only directly tells us that $s$ is the sum of the log potential of $\mu$ and a harmonic function $h$. Nevertheless, for large $\lambda$, we can write $s(\lambda)=\operatorname{tr}[\log(|x-\lambda|^2)]$ without ambiguity, since $\lambda$ will be outside the spectrum of $x$. It is then not hard to see that $s(\lambda)=\log(|\lambda|^2)+o(1)$. The log potential of $\mu$ has the same behavior at infinity, showing that $h$ tends to zero at infinity and must therefore be identically zero. We conclude that $s$ is, actually, the log potential of $\mu$. 
\end{remark}

We now supply the proof of Proposition \ref{limS.prop}.

\begin{proof}
Let $U$ be a nonempty, open, connected subset of $\mathbb{R}^{n}.$ A function
$f:U\rightarrow\lbrack-\infty,\infty)$ is said to be subharmonic if (1) $f$ is
not identically equal to $-\infty,$ (2) $f$ is upper semicontinuous, and (3)
the average of $f$ over a sphere centered at $x\in U$ is greater than or equal
to $f(x),$ whenever the sphere is contained in $U.$ Such a function $f$ is
locally bounded above, because it is upper semicontinuous. Furthermore, $f$ is
in $L_{\mathrm{loc}}^{1}(U)$ and the distributional Laplacian of $f$ is a
non-negative distribution \cite[Theorem 4.1.8]{Hor}. If $f_{n}$ is a weakly
decreasing sequence of subharmonic functions on $U$, the pointwise limit $f$
of $f_{n}$ is easily seen to be subharmonic, provided $f$ is not identically
equal to $-\infty.$ (When computing the averages over spheres, apply monotone
convergence to $f_{1}-f_{n}$.) A smooth function $f:U\rightarrow\mathbb{R}$ is
subharmonic if and only if the Laplacian of $f$ is non-negative \cite[Theorem
2.6.4.2]{Azarin}

We now specialize to the case $n=2$ with $U=\mathbb{C}.$ The function
$S(\lambda,\varepsilon)$ is a smooth function of $\lambda$ for each
$\varepsilon>0$ and the Laplacian of $S$ with respect to $\lambda$ with
$\varepsilon$ fixed is positive \cite[Eq. (11.8)]{mingo}. Now, $S$ can
be computed as
\[
\int_{0}^{\infty}\log(\xi^{2}+\varepsilon)~d\mu_{\left\vert x-\lambda
\right\vert }(\xi),
\]
where $\mu_{\left\vert x-\lambda\right\vert }$ denotes the law (or spectral
distribution)\ of $\left\vert x-\lambda\right\vert .$ After separating the log
function into its positive and negative parts and applying monotone
convergence to the negative part, we see that $s(\lambda)$ can be computed as%
\begin{equation}
s(\lambda)=\int_{0}^{\infty}\log(\xi^{2})~d\mu_{\left\vert x-\lambda
\right\vert }(\xi).\label{sIntegral}%
\end{equation}
Here, the integral of the positive part of the logarithm is finite but the
integral of the negative part can be infinite, meaning that the integral is
well defined but can equal $-\infty.$ If $\lambda$ is outside the spectrum of
$x,$ then $\left\vert x-\lambda\right\vert $ is invertible, so the support of
$\mu_{\left\vert x-\lambda\right\vert }$ of $\left\vert x-\lambda\right\vert $
does not include 0. In that case, the integral in (\ref{sIntegral}) is finite.
We conclude that $s$ is not identically equal to $-\infty$ and is the
decreasing limit of subharmonic functions; therefore $s$ is subharmonic.

We now apply Theorem 4.1.9 in \cite{Hor} to the sequence $f_{n}(\lambda
)=S(\lambda,\varepsilon_{n}),$ for any decreasing sequence $\varepsilon_{n}$
of positive numbers tending to 0. We note that (1) $f_{n}(\lambda)$ is bounded
above by $f_{1}(\lambda),$ and (2) when $\lambda$ is outside the spectrum of
$x,$ the sequence $f_{n}(\lambda)$ is not tending to $-\infty$. Then
\cite[Theorem 4.1.9(a)]{Hor} tells us that $f_{n}$ has a subsequence that
converges in $L_{\mathrm{loc}}^{1}$ to some function $g.$ Then this
subsequence has a sub-subsequence convering pointwise almost everywhere to
$g.$ But the whole sequence $f_{n}$ converges pointwise to $s,$ which means
that $g=s$ almost everywhere. Finally, we apply a standard argument to the
sequence $f_{n}$ in the metric space $L^{1}(K),$ for any compact subset $K$ of
$\mathbb{C}.$ Since every subsequence will have a convergent sub-subsequence
and all the subsequential limits have the same value (namely $s$), the entire
sequence converges to $s$ in $L^{1}(K).$ It is then easily seen that
$S(\lambda,\varepsilon)$ converges to $s$ in $L^{1}(K)$ for every compact set
$K.$ 
\end{proof}

	\section{The case $\tau =s$}\label{sequalstau.sec}
	Recall that we consider the element $xb_{s,\tau}$ where $b_{s,\tau}$ is as in Definition \ref{def b_st} and where $x$ is non-negative and freely independent of $x$. We make the standing assumption that $x$ is not the zero operator. We let $\mu$ denote the law of $x$ as in (\ref{lawdef}). Since $x\neq 0$, the measure $\mu$ will not be a $\delta$-measure at 0. 
	
	We begin by analyzing the case in which $\tau=s$ following the strategy outline in Section \ref{btOutline.sec}.
	\subsection{The PDE for the regularized log potential and its solution}
	Since it is more natural to use $t$ instead of $s$ as the time variable of a PDE, we let $b_t:= b_{t,t}$ be the free multiplicative Brownian motion as defined in Definition \ref{def b_t} and \ref{def b_st}. We then let $x$ be a non-negative operator that is freely independent of $b_t$. We then define $$x_t = xb_t.$$ Consider the functions $S$ and $s_t$ defined by
	\begin{equation}\label{regpot} 
	S(t,\lambda,\varepsilon) = \tr[\log(|x_t-\lambda|^2+\varepsilon)],\quad\varepsilon>0,
	\end{equation}
	and 
	$$ s_t(\lambda) = \lim_{\varepsilon \rightarrow 0^+}S(t,\lambda,\varepsilon) .$$
	Then the density of the Brown measure $W(t,\lambda)$ of $x_t$ can be computed as
	$$ W(t,\lambda) = \frac{1}{4\pi}\Delta_\lambda s_t(\lambda).$$ 
	The function $s_t$ is the log potential of the Brown measure, and we refer to the function $S$ as the ``regularized log potential.''
	
	We use logarithmic polar coordinates $(\rho,\theta)$ defined by $$\lambda=e^\rho e^{i\theta},$$ so that $\rho$ is the logarithm of the usual polar radius $r$. In the case $x=1$, a PDE for $S$ was derived (in rectangular coordinates) in \cite[Theorem 2.7]{dhk}. This derivation applies without change in our situation, as in \cite{hz} in the case of a unitary initial condition. We record the result here.
	\begin{theorem}[Driver--Hall--Kemp]\label{pde_0}
		The function $S$ satisfies the following PDE in logarithmic polar coordinates:
		\begin{equation}\label{pde}
			\frac{\partial S}{\partial t} = \varepsilon\frac{\partial S}{\partial \varepsilon}\left(1+(|\lambda|^2-\varepsilon)\frac{\partial S}{\partial \varepsilon} - \frac{\partial S}{\partial \rho}\right), \quad \lambda = e^\rho e^{i\theta},
		\end{equation}
		with initial condition
		$$S(0,\lambda,\varepsilon) = \tr[\left((x-\lambda)^\ast(x-\lambda)+\varepsilon\right)] = \int_0^{\infty} \log(|\xi-\lambda|^2 +\varepsilon) \,d\mu(\xi) $$
		where $\mu$ is the law of non-negative initial condition $x$.
	\end{theorem}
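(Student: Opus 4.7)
The plan is to follow the derivation of \cite{dhk} for $b_t$ verbatim, observing that the substitution of $x_t = xb_t$ for $b_t$ changes only the initial data and not the structure of the PDE calculation. The reason is that the SDE $db_t = b_t\,dc_t$ combined with time-independence of $x$ gives
\[
dx_t = x\,db_t = xb_t\,dc_t = x_t\,dc_t,
\]
which has exactly the same multiplicative form as the SDE for $b_t$ itself. Thus each step in the DHK computation, which uses only this multiplicative form (not the specific initial value $b_0 = I$), will carry over when $b_t$ is replaced by $x_t$.

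With this in hand, I would apply the free It\^o product rule to compute $dA_t$, where $A_t := (x_t - \lambda)^\ast(x_t - \lambda) + \varepsilon$. The quadratic variation terms involve $dc_t\,dc_t^\ast$ and $dc_t^\ast\,dc_t$, whose It\^o rules (as in Biane--Speicher) produce factors of $\tr$ times the identity. Composing with $\log$ and then applying $\tr$ yields, after simplification, an ODE in $t$ for $S(t,\lambda,\varepsilon)$ whose right-hand side involves $\partial_\varepsilon S$, $(\partial_\varepsilon S)^2$, and mixed derivatives in $\lambda, \bar\lambda$. The final step is to convert from rectangular to logarithmic polar coordinates using
\[
\lambda\,\partial_\lambda + \bar\lambda\,\partial_{\bar\lambda} = \partial_\rho,
\]
which recombines the $\lambda$ and $\bar\lambda$ derivatives into the single $\partial_\rho S$ appearing in \eqref{pde}, together with the multiplicative factor $|\lambda|^2$ on the $(\partial_\varepsilon S)^2$ term. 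The initial condition is then immediate: since $x_0 = x$ is self-adjoint with spectral distribution $\mu$, the functional calculus gives
\[
S(0,\lambda,\varepsilon) = \tr\bigl[\log\bigl((x-\lambda)^\ast(x-\lambda)+\varepsilon\bigr)\bigr] = \int_0^\infty \log(|\xi-\lambda|^2 + \varepsilon)\,d\mu(\xi).
\]

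The main technical obstacle is the free It\^o bookkeeping in the second step. The decisive observation that makes it manageable is that $x$ appears only as a constant operator on the left of $b_t$, and freeness of $x$ from $b_t$ ensures the It\^o corrections behave under $\tr$ exactly as in the $x=I$ case once the resulting products are expressed in terms of $x_t$ and $x_t^\ast$. Consequently no new analytic input beyond \cite{dhk,hz} is required, and the derivation reduces to verifying that each intermediate expression in DHK's calculation transcribes without modification.
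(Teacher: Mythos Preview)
Your proposal is correct and matches the paper's own treatment: the paper does not prove this theorem but simply records it, noting that the derivation in \cite[Theorem 2.7]{dhk} ``applies without change in our situation, as in \cite{hz} in the case of a unitary initial condition.'' Your key observation---that $dx_t = x_t\,dc_t$ has the same multiplicative form as $db_t = b_t\,dc_t$, so the free It\^o calculation is identical and only the initial data changes---is precisely the reason the DHK argument carries over, and your sketch fills in what the paper leaves to the references.
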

	
		The PDE (\ref{pde}) is a first-order, nonlinear PDE of Hamilton--Jacobi type. We now analyze the solution using the method of characteristics. See Chapters 3 and 10 of the book of Evans \cite{evans} for more information. See also Section 5 of \cite{dhk} for a concise derivation of the formulas that are most relevant to the current problem. 
		
		We write $\lambda = e^\rho e^{i\theta} = r e^{i\theta}$ and define the Hamiltonian corresponding to (\ref{pde}) by replacing each derivative of $S$ by a ``momentum'' variable, with an overall minus sign: 
	\begin{equation}\label{hamil}
		H(\rho,\theta,\varepsilon,p_\rho,p_\theta,p_\varepsilon) = -\varepsilon p_{\varepsilon}(1+(r^2 - \varepsilon)p_{\varepsilon} - p_{\rho})\\
	\end{equation}
	Now we consider Hamilton's equations for this Hamiltonian:
	\begin{align}
		\frac{d\rho}{dt} &= \frac{\partial H}{\partial p_\rho},\quad \frac{d\theta}{dt} = \frac{\partial H}{\partial p_\theta},\quad \frac{d\varepsilon}{dt} = \frac{\partial H}{\partial p_\varepsilon},\label{Ham1}\\
		\frac{dp_\rho}{dt} &= -\frac{\partial H}{\partial \rho},\quad \frac{dp_\theta}{dt} = -\frac{\partial H}{\partial \theta},\quad \frac{dp_\varepsilon}{dt} = -\frac{\partial H}{\partial \varepsilon}\label{Ham2}
	\end{align}
	Since the right hand side of (\ref{hamil}) is independent of $\theta$ and $p_\theta$, it is obvious that $d\theta/dt = 0 = dp_\theta/dt.$ Thus, $\theta$ and $p_\theta$ are independent of $t$.
	
	To apply the Hamilton--Jacobi method, we take arbitrary initial conditions for the position variables: $$\rho(0) = \rho_0, \quad r(0) = r_0, \quad \varepsilon(0) = \varepsilon_0.$$
	Then the initial conditions for momentum variables, $p_{\rho,0} = p_\rho(0), p_\theta = p_\theta(0)$, and $ p_0 = p_\varepsilon(0) $, are chosen as follows:  
	\begin{align*}
		p_{\rho,0}(\lambda_0, \varepsilon_0) = \frac{\partial S(0,\lambda_0, \varepsilon_0)}{\partial \rho},\\ p_{\theta}(\lambda_0, \varepsilon_0) = \frac{\partial S(0,\lambda_0, \varepsilon_0)}{\partial \theta}, \\ p_{0}(\lambda_0, \varepsilon_0) = \frac{\partial S(0,\lambda_0, \varepsilon_0)}{\partial \varepsilon}.
	\end{align*}
	Recalling that $\mu$ is the law of $x$, we can write the initial momenta explicitly as
	\begin{align}
		p_{\rho,0}(\lambda_0, \varepsilon_0) &= \int_0^{\infty}\frac{2r^2_0 - 2\xi r_0\cos\theta}{|\xi-\lambda_0|^2+\varepsilon_0}\,d\mu(\xi)\label{prho} \\
		p_{\theta,0}(\lambda_0, \varepsilon_0) &= \int_0^{\infty}\frac{2r_0\xi\sin(\theta)}{|\xi-\lambda_0|^2+\varepsilon_0}\,d\mu(\xi)\label{ptheta} \\ 
		p_{0}(\lambda_0, \varepsilon_0)&= \int_0^{\infty}\frac{1}{|\xi-\lambda_0|^2+\varepsilon_0}\,d\mu(\xi). \label{pepsilon}
	\end{align}
	
		The following computations will be useful to us.
	
	\begin{lemma} The Hamiltonian $H$ is a constant of motion for Hamilton's equations and its value at $t=0$ may be computed as follows:
	\begin{equation}\label{h0}H_0 = -\varepsilon_0 p_0p_2,\end{equation}
	where 
	\begin{equation}\label{pkdef}  p_k = \int_0^{\infty}\frac{\xi^k}{|\xi-\lambda_0|^2+\varepsilon_0}\,d\mu(\xi),\quad k=0,2.\end{equation}
	\end{lemma}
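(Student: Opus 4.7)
The plan has two parts. First, to show that $H$ is a constant of motion, I would observe that the Hamiltonian in \eqref{hamil} depends on no variable named $t$ explicitly. Therefore, applying the chain rule to $t \mapsto H(\rho(t),\theta(t),\varepsilon(t),p_\rho(t),p_\theta(t),p_\varepsilon(t))$ and substituting Hamilton's equations \eqref{Ham1}--\eqref{Ham2} causes the terms $\partial H/\partial q_i \cdot \dot q_i + \partial H/\partial p_i \cdot \dot p_i$ to cancel pairwise. Hence $dH/dt \equiv 0$, so $H \equiv H_0$.

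Second, to evaluate $H_0$, I would plug the formulas \eqref{prho}--\eqref{pepsilon} for the initial momenta into \eqref{hamil} at $t=0$. The only work is to rewrite $p_{\rho,0}$ in terms of $p_0$ and $p_2$. Using the identity
\[
2r_0^2 - 2\xi r_0\cos\theta \;=\; (r_0^2 - \xi^2) + (\xi^2 - 2\xi r_0\cos\theta + r_0^2) \;=\; (r_0^2 - \xi^2) + |\xi - \lambda_0|^2,
\]
and splitting the integrand in \eqref{prho} accordingly, one finds
\[
p_{\rho,0} \;=\; 1 \;+\; (r_0^2 - \varepsilon_0)\,p_0 \;-\; p_2,
\]
where $p_0=p_{\varepsilon,0}$ and $p_2$ is as in \eqref{pkdef}, after writing the mass $\int d\mu(\xi)$ as $\int (|\xi-\lambda_0|^2+\varepsilon_0)/(|\xi-\lambda_0|^2+\varepsilon_0)\,d\mu(\xi) = 1$ and using the elementary split $r_0^2 - \xi^2 = (r_0^2-\varepsilon_0) + (\varepsilon_0-\xi^2)$.

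Substituting this expression for $p_{\rho,0}$ into
\[
H_0 \;=\; -\varepsilon_0\,p_0\bigl(1 + (r_0^2-\varepsilon_0)p_0 - p_{\rho,0}\bigr),
\]
the terms $1$ and $(r_0^2-\varepsilon_0)p_0$ cancel, leaving $H_0 = -\varepsilon_0\,p_0\,p_2$, which is the claimed identity \eqref{h0}. There is no real obstacle here: the constant-of-motion statement is standard for autonomous Hamiltonians, and the evaluation of $H_0$ is just the algebraic rearrangement above.
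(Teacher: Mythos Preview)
Your proposal is correct and follows essentially the same route as the paper. The paper computes the combination $(r_0^2-\varepsilon_0)p_{\varepsilon,0}-p_{\rho,0}=-1+p_2$ directly and then substitutes into \eqref{hamil}, whereas you first solve for $p_{\rho,0}=1+(r_0^2-\varepsilon_0)p_0-p_2$ and then substitute; these are the same algebraic rearrangement, and the constant-of-motion claim (which the paper leaves implicit) is indeed the standard observation for autonomous Hamiltonians.
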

	In the $k=0$ case of (\ref{pkdef})), we interpret $\xi^0$ as being identically equal to 1, even at $\xi=0$. Since we assume $x\neq 0$ so that $\mu\neq\delta_0$, neither $p_0$ nor $p_2$ can equal 0.
	\begin{remark}\label{hard.remark}
	If $x=1$, we find that $p_2=p_0$, in which case many of the formulas in the remainder of the paper simplify greatly. (Observe, for example, the simplification in the formula for $\delta$ in Theorem \ref{t*} or the formula for $T$ in Definition \ref{t.def} if $p_2=p_0$.) Meanwhile, if one considers $b_t$ with a unitary rather than non-negative initial condition, as in \cite{hz}, one again has $p_2=p_0$, because the quantity $\xi^2$ in the numerator in (\ref{pkdef}) is really $|\xi|^2$, which would equal $1$ in the unitary case. This observation helps explain why the case of a non-negative initial condition is so much more technically difficult than the case of a unitary initial condition. 
	\end{remark}
	\begin{proof} The Hamiltonian is easily seen to be a constant of motion for any Hamiltonian system. 
		If $\lambda_0=r_0e^{i\theta}$, we compute
		\begin{align*}
			(r_0^2 - \varepsilon_0)p_{\varepsilon,0} - p_{\rho,0} &= \int_0^{\infty}\frac{-r^2_0 + 2\xi r_0\cos\theta - \varepsilon_0}{\xi^2 +r_0^2 -2\xi r_0\cos\theta +\varepsilon_0}\,d\mu(\xi)\\
			&= -1 + p_2.
		\end{align*}
		Then
		$$ H_0 = -\varepsilon_0p_0(1 -1 + p_2) = -\varepsilon_0p_0p_2,$$
		as claimed. 
	\end{proof}

	Now, we are ready to solve the PDE using arguments similar to those in \cite[Section 5]{dhk}.
	\begin{theorem} \label{HJ}
		Assume $\lambda_0 \neq 0$ and $\varepsilon_0 > 0$. Suppose a solution to the system (\ref{Ham1})--(\ref{Ham2}) with initial conditions (\ref{prho})--(\ref{pepsilon}) exists with $\varepsilon(t)> 0$ for $0\leq t \leq T$. Then
		\begin{equation}\label{hjformula}S(t,\lambda(t),\varepsilon(t)) = S(0,\lambda_0,\varepsilon_0) - H_0 t + \log|\lambda(t)| - \log|\lambda_0|,\end{equation}
		for all $ 0\leq t <T$. Moreover, it also satisfies
		\begin{align*}
			\frac{\partial S}{\partial \varepsilon}(t,\lambda(t),\varepsilon(t)) &= p_\varepsilon(t) \\
			\frac{\partial S}{\partial \rho}(t,\lambda(t),\varepsilon(t)) &= p_\rho(t).
		\end{align*}
	\end{theorem}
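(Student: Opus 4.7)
The plan is to follow the classical method of characteristics for Hamilton--Jacobi PDEs, closely paralleling the argument of \cite[Section 5]{dhk}. Two preliminary facts need to be established along the characteristic curve: conservation of the Hamiltonian and the identification of the momentum variables with the corresponding partial derivatives of $S$ along the curve. The Hamilton--Jacobi formula (\ref{hjformula}) then follows from a chain-rule computation and integration.

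First, since $H$ in (\ref{hamil}) has no explicit $t$-dependence, Hamilton's equations (\ref{Ham1})--(\ref{Ham2}) yield $dH/dt = 0$ by the standard cancellation $\nabla_x H\cdot\nabla_p H - \nabla_p H\cdot \nabla_x H = 0$, so $H$ equals $H_0$ along the characteristic on $[0,T]$. Next---and this is the technical heart of the argument---I would establish the ``momentum/gradient identification''
\[
p_\rho(t) = \frac{\partial S}{\partial \rho}(t,\lambda(t),\varepsilon(t)),\qquad p_\varepsilon(t) = \frac{\partial S}{\partial \varepsilon}(t,\lambda(t),\varepsilon(t)),
\]
together with the analogous identity for $p_\theta$. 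The strategy is to introduce the discrepancy $q(t) := p(t) - \nabla_{(\rho,\theta,\varepsilon)} S(t,\lambda(t),\varepsilon(t))$, to observe that $q(0)=0$ by the very choice of initial momenta (\ref{prho})--(\ref{pepsilon}), and then to verify that $q$ solves a linear homogeneous ODE along the characteristic. This last step is done by differentiating the PDE (\ref{pde}) in each position variable, combining the result with Hamilton's equations through the chain rule, and noting that the coefficients involve only second derivatives of $S$; uniqueness for linear ODEs then forces $q\equiv 0$ on $[0,T)$.

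With those in hand, I would compute
\[
\frac{d}{dt} S(t,\lambda(t),\varepsilon(t)) = \frac{\partial S}{\partial t} + \frac{\partial S}{\partial \rho}\dot{\rho} + \frac{\partial S}{\partial \theta}\dot{\theta} + \frac{\partial S}{\partial \varepsilon}\dot{\varepsilon},
\]
substitute $\partial_t S = -H$ from (\ref{pde}), replace each $\partial_i S$ by the corresponding $p_i$ via the identification, and use Hamilton's equations for $(\dot\rho,\dot\theta,\dot\varepsilon)$. The $\dot\theta$ contribution drops out because $\partial H/\partial p_\theta = 0$. A direct algebraic simplification using the explicit form of $H$ in (\ref{hamil}) collapses the remaining expression to a constant multiple of $H_0$ plus $\dot\rho = \varepsilon p_\varepsilon$. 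Integrating over $[0,t]$ and using $\rho(t)-\rho_0 = \log|\lambda(t)|-\log|\lambda_0|$ gives (\ref{hjformula}), and the momentum identification is precisely the second assertion of the theorem.

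The main obstacle is the momentum/gradient identification. It requires $S$ to be $C^2$ in a neighborhood of the characteristic so that the linear ODE for $q$ is well-defined. This regularity is available because the initial data (\ref{initialCond}) is real-analytic in $(\lambda,\varepsilon)$ on $\{\varepsilon>0\}$, and the hypothesis $\varepsilon(t)>0$ on $[0,T]$ keeps the characteristic inside this smooth regime. Once the identification is secured, the remaining computation is straightforward manipulation of the explicit Hamiltonian.
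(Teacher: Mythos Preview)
Your proposal is correct and follows essentially the same route as the paper. The only difference is one of packaging: the paper outsources the momentum/gradient identification and the general formula $S(t,\lambda(t),\varepsilon(t)) = S(0,\lambda_0,\varepsilon_0) - H_0 t + \int_0^t \varepsilon(u)p_\varepsilon(u)\,du$ to \cite[Proposition~5.3]{dhk}, and then simply observes that $\dot\rho = \partial H/\partial p_\rho = \varepsilon p_\varepsilon$ so the integral equals $\rho(t)-\rho_0 = \log|\lambda(t)|-\log|\lambda_0|$; you instead spell out the identification argument (via the linear ODE for the discrepancy $q$) and the chain-rule computation in full.
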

	\begin{proof}
		We calculate
		\begin{align*}
			p_\rho\frac{d\rho}{dt} +	p_\varepsilon\frac{d\varepsilon}{dt}  &= p_\rho\frac{\partial H}{\partial p_\rho} +	p_\varepsilon\frac{\partial H}{\partial p_\varepsilon} \\
			&= \varepsilon p_\varepsilon - 2H = \varepsilon p_\varepsilon - 2H_0
		\end{align*}
		By Proposition 5.3 in \cite{dhk}, we have 
		$$S(t,\lambda(t),\varepsilon(t)) = S(0,\lambda_0,\varepsilon_0) -  H_0 t + \int_0^t \varepsilon(s)p_\varepsilon(s) \,ds.$$
		Since 
		$$ \frac{d\rho}{dt} = \frac{\partial H}{\partial p_\rho}= \varepsilon p_\varepsilon,$$ 
		we have 
		\begin{equation}\label{loglog} \int_0^t \varepsilon(s)p_\varepsilon(s) \,ds = \log|\lambda(t)| - \log|\lambda_0|.\end{equation}
		Thus, we are done.
	\end{proof}
	\subsection{The lifetime of the solution and its $\varepsilon_0\rightarrow 0$ limit}
	We wish to apply the Hamilton--Jacobi method using the strategy outlined in Section \ref{btOutline.sec}. Thus, we try to choose initial conditions $\lambda_0$ and $\varepsilon_0$ for the Hamiltonian system (\ref{Ham1})--(\ref{Ham2})---with the initial momenta then be determined by (\ref{prho})--(\ref{ptheta})---so that $\lambda(t)$ equals $\lambda$ and $\varepsilon(t)$ equals 0. Our strategy for doing this is to choose $\varepsilon_0=0$ and $\lambda_0=\lambda$, \emph{provided} that the lifetime of the solution remains greater than $t$ as $\varepsilon$ approaches zero. Thus, we need to determine the lifetime and take its $\varepsilon\rightarrow 0$ limit. 
	
	We will eventually want to let $\varepsilon_0$ tend to zero in the Hamilton--Jacobi formula (\ref{hjformula}). We will then want to apply the inverse function theorem to solve for $\lambda_0$ and $\varepsilon_0$ in terms of $\lambda$ and $\varepsilon$. To do this, we need to analyze solutions to the Hamiltonian system (\ref{Ham1})--(\ref{Ham2}) with $\varepsilon_0$ in a neighborhood of 0. Thus, in this section, we allow $\varepsilon_0$ to be slightly negative. We emphasize that even though the Hamiltonian system makes sense for negative values of $\varepsilon,$ the Hamilton--Jacobi formula (\ref{hjformula}) is only applicable when $\varepsilon(s)>0$ for all $s\leq t$, because the regularized log potential $S$ is only defined for $\varepsilon>0$.

	\begin{lemma} \label{pepsilon formula}
	The quantity $$\phi(t)=\varepsilon(t)p_\varepsilon(t)+p_\rho(t)/2$$ is a constant of motion for the Hamiltonian system (\ref{Ham1})--(\ref{Ham2}). Then if we let $C=2\phi(0)-1$, we have
		\begin{equation}\label{ep2}\varepsilon(t)p_{\varepsilon}(t)^2 = \varepsilon_0p_{\varepsilon,0}^2e^{-Ct}\end{equation}
		for all $t$. The constant $C$ may be computed as 
		\[ C= p_0(r_0^2+\varepsilon_0 )-p_2,
		\]
		where $p_0$ and $p_2$ are defined by (\ref{pkdef}).
	\end{lemma}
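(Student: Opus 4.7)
The proof falls into three computational steps, all driven by writing out Hamilton's equations for the Hamiltonian $H$ in \eqref{hamil} and rearranging.

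First I would compute the partial derivatives of $H$ that are needed. From $H=-\varepsilon p_\varepsilon -\varepsilon(r^2-\varepsilon)p_\varepsilon^2+\varepsilon p_\varepsilon p_\rho$ (with $r^2=e^{2\rho}$) the relevant Hamilton equations become
\begin{align*}
\tfrac{d\rho}{dt}&=\varepsilon p_\varepsilon,\\
\tfrac{d\varepsilon}{dt}&=-\varepsilon-2\varepsilon(r^2-\varepsilon)p_\varepsilon+\varepsilon p_\rho,\\
\tfrac{dp_\rho}{dt}&=2\varepsilon r^2 p_\varepsilon^{\,2},\\
\tfrac{dp_\varepsilon}{dt}&=p_\varepsilon\bigl[1+(r^2-2\varepsilon)p_\varepsilon-p_\rho\bigr].
\end{align*}
This is the routine part; the only subtlety is that $r^2$ depends on $\rho$, which produces the $\dot p_\rho$ formula above.

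Next I would verify that $\phi=\varepsilon p_\varepsilon+p_\rho/2$ is conserved by differentiating and substituting the equations above. A short computation shows
\[
\tfrac{d}{dt}(\varepsilon p_\varepsilon)=-\varepsilon r^2 p_\varepsilon^{\,2},
\]
all other terms cancelling, and then $\tfrac{d}{dt}(p_\rho/2)=\varepsilon r^2 p_\varepsilon^{\,2}$ gives $d\phi/dt=0$. For the exponential decay formula, I would set $Y(t)=\varepsilon(t)p_\varepsilon(t)^{2}$ and differentiate; after collecting terms the result simplifies to
\[
\tfrac{dY}{dt}=\varepsilon p_\varepsilon^{\,2}\bigl(1-2\varepsilon p_\varepsilon-p_\rho\bigr)=(1-2\phi)\,Y=-C\,Y,
\]
where in the last step I use the conservation of $\phi$ together with the definition $C=2\phi(0)-1$. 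Integrating yields \eqref{ep2}.

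Finally, I would evaluate $C$ explicitly. Using the initial momenta from \eqref{prho}--\eqref{pepsilon} together with $\int d\mu=1$, write
\[
C=2\varepsilon_0 p_{\varepsilon,0}+p_{\rho,0}-1=\int_0^\infty\frac{2\varepsilon_0+2r_0^{\,2}-2\xi r_0\cos\theta-\bigl(|\xi-\lambda_0|^2+\varepsilon_0\bigr)}{|\xi-\lambda_0|^2+\varepsilon_0}\,d\mu(\xi).
\]
Expanding $|\xi-\lambda_0|^2=\xi^2+r_0^{\,2}-2\xi r_0\cos\theta$, the numerator collapses to $\varepsilon_0+r_0^{\,2}-\xi^2$, giving $C=(r_0^{\,2}+\varepsilon_0)p_0-p_2$ as claimed.

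I do not anticipate any real obstacle: the whole argument is a Hamilton-equation computation, and the only points to be careful about are (i) keeping the $\rho$-dependence of $r^2$ in the Poisson bracket that produces $\dot p_\rho$, and (ii) using $\mu$ a probability measure at the last step. The qualitative point that $\phi$ is a constant of motion follows, in retrospect, from the fact that $H$ depends on $\rho$ only through $r^2 p_\varepsilon$, so the combination $p_\rho+2\varepsilon p_\varepsilon$ is a Noether-type invariant associated with the scaling $(\rho,\varepsilon)\mapsto(\rho+c,e^{-2c}\varepsilon)$.
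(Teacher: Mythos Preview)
Your proof is correct and follows essentially the same route as the paper's own argument: both verify directly from Hamilton's equations that $d\phi/dt=0$ and that $\tfrac{d}{dt}(\varepsilon p_\varepsilon^{2})=-(2\phi-1)\,\varepsilon p_\varepsilon^{2}$, then integrate and evaluate $C$ from the initial momenta. The paper merely states these as ``an easy computation,'' whereas you have spelled out the intermediate identities $\tfrac{d}{dt}(\varepsilon p_\varepsilon)=-\varepsilon r^{2}p_\varepsilon^{2}$ and $\tfrac{d}{dt}(p_\rho/2)=\varepsilon r^{2}p_\varepsilon^{2}$ and the explicit Hamilton equations; your Noether-type scaling remark is a nice addition not present in the paper.
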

	\begin{proof} It is an easy computation to show, using (\ref{Ham1})--(\ref{Ham2}), that $d\phi/dt=0$ and that $$\frac{d}{dt}(\varepsilon p_{\varepsilon}^2)=-\varepsilon p_{\varepsilon}^2(2\phi-1).$$
			Then since $\phi$ is a constant of motion, we obtain the claimed formula for $\varepsilon p_{\varepsilon}^2$. Meanwhile, we can compute that
			\begin{equation*}
			2\phi(0)-1=\int_0^{\infty} \frac{2\varepsilon_0+(2r^2_0 - 2\xi r_0\cos(\theta))-(\xi^2 +r_0^2 -2\xi r_0\cos\theta +\varepsilon_0)}{\xi^2 +r_0^2 -2\xi r_0\cos\theta +\varepsilon_0}\,d\mu(\xi),
		\end{equation*}
		which simplifies to the claimed expression for $C$.
		\end{proof}

	We are now ready to compute the blow-up time of the solutions to the Hamiltonian system. We recall the definition in (\ref{pkdef}) of the quantities $p_0$ and $p_2$.
	
	\begin{theorem}\label{t*} 
	Take $\lambda_0\neq 0$ with $\lambda_0$ outside $\operatorname{supp}(\mu)$. 
		As long as $\varepsilon_0$ is not too negative, the blow-up time $t_\ast$ of the system (\ref{Ham1})--(\ref{Ham2}) is
		$$t_{\ast} = \frac{1}{r_0\sqrt{p_2p_0} \sqrt{\delta^2-4}}\log\left(\frac{\delta + \sqrt{\delta^2-4}}{\delta - \sqrt{\delta^2-4}}\right),$$
		where
		\begin{equation}\label{deltadef}
			\delta = \frac{p_0r^2_0 + p_2 + p_0\varepsilon_0}{r_0\sqrt{p_2p_0}}.
		\end{equation}
		If $\varepsilon_0$ is positive, $\varepsilon(t)$ will remain positive for all $t<t_\ast$. 
	\end{theorem}
	The precise assumptions on $\varepsilon_0$ are given in the first paragraph of the proof.
	\begin{proof}
	If $\lambda_0$ is outside $\operatorname{supp}(\mu)$ and $\varepsilon_0$ is not too negative, the initial momenta in (\ref{prho}), (\ref{ptheta}), and (\ref{pepsilon}) will be well defined and the quantities $p_k$ in (\ref{pkdef}) will be well defined and positive. Specifically, we need $\varepsilon_0> -d(\lambda_0,\operatorname{supp}(\mu))^2$. In what follows, the statement ``$\varepsilon_0$ is slightly negative'' will mean that $\varepsilon_0<0$ is chosen to be greater than $-d(\lambda_0,\operatorname{supp}(\mu))^2$ and so that the quantity $\delta$ in (\ref{deltadef}) remains positive.

Recall that
\[
\frac{dp_{\varepsilon}}{dt}=-\frac{\partial H}{\partial\varepsilon}=-\frac
{H}{\varepsilon}-\varepsilon p_{\varepsilon}^{2}.
\]
Using Lemma \ref{pepsilon formula}, we can express $dp_{\varepsilon}/dt$ in a
form that involves only $p_{\varepsilon}$ and constants of motion:
\[
\frac{dp_{\varepsilon}}{dt}=-\frac{H}{\varepsilon_{0}p_{0}^{2}}p_{\varepsilon
}^{2}e^{Ct}-\varepsilon_{0}p_{0}^{2}e^{-Ct}.
\]
Let $B=\varepsilon_{0}p_{0}^{2}$ and $y(t)=-\frac{H}{B}p_{\varepsilon}%
e^{Ct}+\frac{C}{2}$. Since a Hamiltonian $H$ is a constant of motion, we
compute that, on the one hand
\[
\frac{dy}{dt}=\frac{H^{2}}{B^{2}}p_{\varepsilon}^{2}e^{2Ct}-\frac{HC}%
{B}p_{\varepsilon}e^{Ct}+H,
\]
but on the other hand,
\[
y^{2}=\frac{H^{2}}{B^{2}}p_{\varepsilon}^{2}e^{2Ct}-\frac{HC}{B}%
p_{\varepsilon}e^{Ct}+\frac{C^{2}}{4}.
\]

We therefore find that
\begin{equation}
y^{2}-\frac{dy}{dt}=a^{2},\label{sepODE}%
\end{equation}
where
\[
a^{2}=\frac{C^{2}}{4}-H.
\]
Now, (\ref{sepODE}) is a separable differential equation which can be solved
as in Lemma 5.8 in \cite{dhk}, as
\begin{equation}
y(t)=\frac{y_{0}\cosh(at)-a\sinh(at)}{\cosh(at)-y_{0}\frac{\sinh(at)}{a}%
}.\label{yoft1}%
\end{equation}
We note that the right-hand side of (\ref{yoft1}) is an even function of $a,$
so that either of the square roots of $a^{2}$ may be used. Now, $y(t)$ (and
therefore also $p_{\varepsilon}(t)$) will blow up when the denominator is
zero, i.e., at $t=t_{\ast}$ where
\begin{equation}
t_{\ast}=\frac{1}{2a}\log\left(  \frac{1+a/y_{0}}{1-a/y_{0}}\right)
.\label{tstarFirst}%
\end{equation}

We now compute the value of $a^{2}$ as
\begin{align}
a^{2} &  =\frac{C^{2}}{4}-H\nonumber\\
&  =\frac{p_{0}^{2}(r_{0}^{2}+\varepsilon_{0}-\frac{p_{2}}{p_{0}})^{2}}%
{4}+\varepsilon_{0}p_{0}p_{2}.\label{a2value1}%
\end{align}
We further simplify this result as%
\begin{align}
a^{2} &  =\frac{p_{0}^{2}}{4}\left(  \left(  r_{0}^{2}+\varepsilon_{0}%
+\frac{p_{2}}{p_{0}}\right)  ^{2}-4r_{0}^{2}\frac{p_{2}}{p_{0}}\right)
\label{a2value2}\\
&  =\frac{p_{0}p_{2}r_{0}^{2}}{4}\left(  \delta^{2}-4\right)  .\nonumber
\end{align}
We then compute
\begin{align*}
y_{0} &  =p_{2}+\frac{p_{0}r_{0}^{2}+p_{0}\varepsilon_{0}-p_{2}}{2}\\
&  =\frac{p_{0}r_{0}^{2}+p_{0}\varepsilon_{0}+p_{2}}{2}\\
&  =\frac{r_{0}\sqrt{p_{2}p_{0}}}{2}\delta.
\end{align*}

We can then find the value of $p_{\varepsilon}(t)$ from the value of
$y(t),$ with the result that%
\begin{equation}
p_{\varepsilon}(t)=p_{0}e^{-Ct}\frac{\sqrt{\delta^{2}-4}\cosh(at)+(2r_{0}%
\sqrt{p_{0}/p_{2}}-\delta)\sinh(at)}{\sqrt{\delta^{2}-4}\cosh(at)-\delta
\sinh(at)}.\label{pEpsFormula}%
\end{equation}
This expression is very similar to the one in \cite[Eq. (5.45)]{dhk}, with the
only difference being the factor of $\sqrt{p_{0}/p_{2}}$ in the second term in
the numerator. (This factor does not appear in \cite{dhk} because $p_{2}%
=p_{0}$ when $x=1.$) We now claim that if $\varepsilon_{0}$ is either
non-negative or slightly negative, the solution to the whole Hamiltonian
system (\ref{Ham1})--(\ref{Ham2}) will exist up to the time $t_{\ast}$ when
the denominator on the right-hand side of (\ref{pEpsFormula}) becomes zero. A
key step is to solve for $\varepsilon(t)$ in (\ref{ep2}) as
\begin{equation}
\varepsilon(t)=\frac{1}{p_{\varepsilon}(t)^{2}}\varepsilon_{0}p_{0}^{2}%
e^{-Ct}.\label{epsilonoft}%
\end{equation}
This formula is meaningful as long as $p_{\varepsilon}(t)$ remains nonzero. 

We now claim that $p_{\varepsilon}(t)$ remains positive until it blows up,
even if $\varepsilon_{0}$ is slightly negative. We consider first the case
$\varepsilon_{0}\geq0.$ In that case, by (\ref{a2value1}) and (\ref{a2value2}), $a^{2}\geq0$
and $\delta\geq 4$. We then claim that the coefficient of
$\cosh(at)$ in the numerator of the right-hand side of (\ref{pEpsFormula}) is
at least as big as the absolute value of the coefficient of $\sinh(at).$ Thus,
the numerator will be positive for all $t>0.$ To verify this claim, we
compute that
\[
\delta^{2}-4-(2r_{0}\sqrt{p_{0}/p_{2}}-\delta)^{2}=\frac{4p_{0}\varepsilon
_{0}}{p_{2}}\geq0.
\]

We consider next the case in which $\varepsilon_{0}$ is slightly negative.
From (\ref{a2value1}), we see that typically, $a^{2}$ remains positive even
when $\varepsilon_{0}$ becomes slightly negative, in which case, the argument is
as in the case $\epsilon_0\geq 0$. But if $r_{0}^{2}%
=p_{2}/p_{0}$ at $\varepsilon_{0}=0,$ the value of $a^{2}$---and thus the
value of $\delta^{2}-4$---will become negative when $\varepsilon_{0}$ is
slightly negative. In that case, we write $a=i\alpha,$ where we can choose
$\alpha>0.$ Then the expression in (\ref{pEpsFormula}) becomes%
\begin{equation}
p_{\varepsilon}(t)=p_{0}e^{-Ct}\frac{\sqrt{4-\delta^{2}}\cos(\alpha t)-(\delta-2r_{0}
\sqrt{p_{0}/p_{2}})\sin(\alpha t)}{\sqrt{4-\delta^{2}}\cos
(\alpha t)-\delta\sin(\alpha t)}.\label{pEpsNeg}%
\end{equation}

The numerator on the right-hand side of (\ref{pEpsNeg}) becomes zero at the
time
\begin{equation*}
t_1=\frac{1}{\alpha}\tan^{-1}\left(  \frac{\sqrt{4-\delta^{2}}%
}{\delta-2r_{0}\sqrt{p_{0}/p_{2}}}\right)  
\end{equation*}
while denominator becomes zero at%
\begin{equation*}
t_2=\frac{1}{\alpha}\tan^{-1}\left(  \frac{\sqrt{4-\delta^{2}}%
}{\delta}\right)  .
\end{equation*}
Let $\theta_1$ and $\theta_2$ denote the arguments of 
the inverse tangents in the formulas for $t_1$ and $t_2$, respectively. 
Then $\theta_2$ is positive, so that the
value of the inverse tangent is in $(0, \pi/2).$ Now, $\theta_1$  could be positive and bigger than
$\theta_2$, in which case $t_1$ is bigger
than $t_2$ Alternatively, $\theta_1$ could be negative, in which case, to get a positive value of
$t_1,$ the value of the inverse tangent must be bigger than
$\pi/2.$ Either way, $t_1$ is greater than $t_2,$
showing that $p_{\varepsilon}(t)$ remains positive until it blows up. 

Since $p_{\varepsilon}(t)$ remains positive until it blows up, the formula for
$\varepsilon(t)$ in (\ref{epsilonoft}) remains nonsingular up to time
$t_{\ast}.$ We can then follow the proof of \cite[Proposition 5.11]{dhk} to
construct a solution to the system (\ref{Ham1})--(\ref{Ham2}) up to time
$t_{\ast}$.
		
		Finally, by (\ref{epsilonoft}), if $\varepsilon_0>0$, then $\varepsilon(t)$ will remain positive for $t<t_\ast$.
	\end{proof}
	
		We now supply the proof of Lemma \ref{smalleps.lem}, stating that in the limit as $\varepsilon_0\rightarrow 0$, we obtain $\varepsilon(t)\equiv 0$ and $\lambda(t)\equiv\lambda_0$.\label{limitproof}
		
		\begin{proof}[Proof of Lemma \ref{smalleps.lem}]
		Since $\lambda_0$ is assumed to be outside $\supp(\mu)$, we may take $\varepsilon\rightarrow 0$ in (\ref{epsilonoft}) and $\varepsilon_0$ and $p_0$ will remain finite. Furthermore, as discussed in the proof of Theorem \ref{t*}, as long as $\varepsilon_0$ is at most slightly negative, $p_\varepsilon(t)$ will remain positive for as long as the solution to the whole system exists. Thus, $\varepsilon(t)$ becomes identically zero in the limit, until the solution of the system ceases to exist. Meanwhile, when $\varepsilon_0$ approaches 0 (so that $\varepsilon(t)$ also approaches 0), we can see from (\ref{loglog}) that $\vert\lambda(t)\vert$ approaches $\vert\lambda_0\vert$. Since, the argument of $\lambda(t)$ is a constant of motion, the proof is complete.
		\end{proof}

	We now study the behavior of the lifetime $t_\ast$ in the limit as $\varepsilon$ tends to zero.
	\begin{definition}\label{t.def}Define the function $T : \mathbb{C}\backslash\supp(\mu) \rightarrow [0, \infty)$ by 
	\begin{align*}
		T(\lambda_0) = \begin{cases}
			\frac{\log(\tilde{p}_2) - \log(\tilde{p}_0r_0^2)}{ \tilde{p}_2 - \tilde{p}_0r_0^2 } &\tilde{p}_0r_0^2 \neq \tilde{p}_2 \\
			\frac{1}{\tilde{p}_2} &\tilde{p}_0r_0^2 = \tilde{p}_2
		\end{cases},
	\end{align*}
	where $r_0=\left\vert\lambda_0\right\vert$ and $\tilde{p}_k$ denotes the value of the quantity $p_k$ in (\ref{pkdef}) at $\varepsilon_0=0$: 
	\begin{equation}\label{pktildedef}  \tilde{p}_k = \int_0^{\infty}\frac{\xi^k}{|\xi-\lambda_0|^2}\,d\mu(\xi),\quad k=0,2.\end{equation}
	\end{definition}
	
	Note that $T$ has a removable singularity at $\tilde{p}_0r_0^2 = \tilde{p}_2$, so that $T$ is an analytic function of the positive quantities $\tilde{p}_0$, $\tilde{p}_2$, and $r_0$. From the definition of $T$ and (\ref{pktildedef}), we can easily see that $$T(\overline{\lambda_0})=T(\lambda_0).$$
	\begin{prop}\label{zerooutside}
		For all $\lambda_0$ outside $\supp(\mu)$, we have
		$$\lim_{\varepsilon_0 \rightarrow 0} t_\ast(\lambda_0,\varepsilon_0) = T(\lambda_0).$$
		Moreover, when $\lambda_0 =0$ is outside $\supp(\mu)$, we have
		$$\lim_{\varepsilon_0 \rightarrow 0} t_\ast(0,\varepsilon_0) = T(0) = \infty.$$
	\end{prop}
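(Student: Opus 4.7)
The plan is to derive the stated limits directly from the closed-form expression for $t_\ast$ in Theorem \ref{t*} by taking $\varepsilon_0\to 0^+$, treating separately the generic case $\tilde p_0 r_0^2\ne\tilde p_2$, the degenerate case $\tilde p_0 r_0^2=\tilde p_2$, and the special point $\lambda_0=0$.

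First, I would observe that since $\lambda_0\notin\supp(\mu)$, the denominators $|\xi-\lambda_0|^2+\varepsilon_0$ appearing in (\ref{pkdef}) are bounded away from zero uniformly in $\xi\in\supp(\mu)$ for small $\varepsilon_0\ge 0$. Thus $p_0(\varepsilon_0)\to\tilde p_0$ and $p_2(\varepsilon_0)\to\tilde p_2$ as $\varepsilon_0\to 0$, both finite and strictly positive (the positivity uses $\mu\ne\delta_0$). Consequently the quantity $\delta$ from Theorem \ref{t*} converges to $\tilde\delta:=(\tilde p_0 r_0^2+\tilde p_2)/(r_0\sqrt{\tilde p_0\tilde p_2})$.

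Second, for the generic case $\tilde p_0 r_0^2\ne\tilde p_2$ (assuming $r_0>0$), I would introduce $a=\sqrt{\tilde p_0}\,r_0$ and $b=\sqrt{\tilde p_2}$. Then $\tilde\delta=(a^2+b^2)/(ab)=a/b+b/a>2$ by AM--GM, with strict inequality precisely because $a\ne b$. A short algebraic manipulation gives $\sqrt{\tilde\delta^2-4}=|a^2-b^2|/(ab)$, so
\[
r_0\sqrt{\tilde p_0\tilde p_2}\sqrt{\tilde\delta^2-4}=|a^2-b^2|=|\tilde p_0 r_0^2-\tilde p_2|,
\]
while
\[
\frac{\tilde\delta+\sqrt{\tilde\delta^2-4}}{\tilde\delta-\sqrt{\tilde\delta^2-4}}=\frac{\max(a^2,b^2)}{\min(a^2,b^2)},
\]
whose logarithm is $|\log(\tilde p_0 r_0^2)-\log(\tilde p_2)|$. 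The absolute values in numerator and denominator cancel and yield $T(\lambda_0)$ as in Definition \ref{t.def}.

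Third, for the degenerate case $\tilde p_0 r_0^2=\tilde p_2$, we have $\tilde\delta=2$ and $\sqrt{\tilde\delta^2-4}\to 0$, so the formula is $0/0$. I would resolve this with a Taylor expansion: set $u=\sqrt{\delta^2-4}$ and use $\log\!\bigl((\delta+u)/(\delta-u)\bigr)=2u/\delta+O(u^3)$ as $u\to 0$, giving
\[
t_\ast\sim\frac{2}{\delta\, r_0\sqrt{p_0 p_2}}\longrightarrow\frac{1}{r_0\sqrt{\tilde p_0\tilde p_2}}=\frac{1}{\tilde p_2},
\]
where the last equality uses $\tilde p_0 r_0^2=\tilde p_2$. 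This matches the second branch of Definition \ref{t.def} and also shows, together with the generic case, that $T$ extends continuously across the locus $\tilde p_0 r_0^2=\tilde p_2$.

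Finally, for $\lambda_0=0$ (assumed outside $\supp(\mu)$), the formula for $\delta$ blows up and must be handled directly. I would return to the expressions for $a$ and $y_0$ in the proof of Theorem \ref{t*}, substitute $r_0=0$, and compute $a=\tfrac{p_0}{2}(\varepsilon_0+p_2/p_0)$ and $y_0=\tfrac{1}{2}(p_0\varepsilon_0+p_2)$, so that $a/y_0=1$ identically. Then $t_\ast=\tfrac{1}{2a}\log\!\bigl((1+a/y_0)/(1-a/y_0)\bigr)=+\infty$ for every $\varepsilon_0>0$, so the limit is $+\infty=T(0)$, where the value $T(0)=\infty$ follows from Definition \ref{t.def} since $\tilde p_0 r_0^2=0$ forces $\log(\tilde p_0 r_0^2)=-\infty$. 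The main technical nuisance I anticipate is bookkeeping of signs and absolute values in the generic case; the degenerate case and $\lambda_0=0$ case, while singular-looking, are reduced to straightforward Taylor expansions once the right substitutions are made.
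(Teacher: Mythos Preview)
Your proposal is correct and follows essentially the same route as the paper: the same three-way case split (generic, degenerate, and $\lambda_0=0$), and the same reduction to the closed-form expression for $t_\ast$ from Theorem \ref{t*}. The only notable difference is in the $\lambda_0=0$ case: the paper manipulates the final $\delta$-formula directly (which is a bit awkward since $\delta$ itself has $r_0$ in the denominator and is thus undefined at $r_0=0$), whereas you step back to the intermediate quantities $a$ and $y_0$ from the proof of Theorem \ref{t*}, observe that $a=y_0$ identically when $r_0=0$, and read off $t_\ast=+\infty$ for every $\varepsilon_0>0$. Your treatment of this case is cleaner and avoids the $0/0$ ambiguity in the $\delta$-formulation; otherwise the two arguments are interchangeable.
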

	\begin{proof}
	Recall the definition of $\delta$ in Theorem \ref{t*}. The key point is that when $\varepsilon_0=0$, we have 
	\[
	\delta^2-4=\frac{(\tilde{p}_0 r_0-\tilde{p}_2)^2}{\tilde{p}_0 \tilde{p}_2r_0^2}.
	\]
	We then note that the quantity
	\[
	\frac{1}{\sqrt{\delta^2-4}}\log\left(\frac{\delta + \sqrt{\delta^2-4}}{\delta - \sqrt{\delta^2-4}}\right)
	\]
	has the same value no matter which square root of $\delta^2-4$ we use. It is therefore harmless to choose 
	\[
	\sqrt{\delta^2-4}=\frac{\tilde{p}_0 r_0-\tilde{p}_2}{r_0 \sqrt{\tilde{p}_0 \tilde{p}_2}}.
	\]
	In that case, we find that
	
		\[ \lim_{\varepsilon_0 \rightarrow 0}\frac{\delta + \sqrt{\delta^2-4}}{\delta - \sqrt{\delta^2-4}} = \frac{\tilde{p}_0r_0^2}{\tilde{p}_2}\]
	and
	\[ 
	\lim_{\varepsilon_0 \rightarrow 0}r_0\sqrt{p_0 p_2}\sqrt{\delta^2 -4} = \tilde{p}_0r_0^2 -\tilde{p}_2
	\]
	and the claimed formula holds, provided $\tilde{p}_0 r_0^2\neq \tilde{p}_2$.
	
	In the case $\tilde{p}_0 r_0^2= \tilde{p}_2$, we have $\lim_{\varepsilon_0 \rightarrow 0} \delta = 2$ and, so that
		$$ \lim_{\varepsilon \rightarrow 0} \frac{1}{ \sqrt{\delta^2-4}}\log\left(\frac{\delta + \sqrt{\delta^2-4}}{\delta - \sqrt{\delta^2-4}}\right) = 1$$
		and $$\lim_{\varepsilon_0 \rightarrow 0} t_\ast  = \frac{1}{\tilde{p}_2}$$
		as claimed.
		
		Finally, when $\lambda_0 = 0 \notin \supp(\mu)$, we have that $\tilde{p}_0r_0^2 = 0< 1 =\tilde{p}_2.$ Then 
		\[ \lim_{\varepsilon_0 \rightarrow 0}\frac{\delta + \sqrt{\delta^2-4}}{\delta - \sqrt{\delta^2-4}} = \infty \quad \text{and } \lim_{\varepsilon_0 \rightarrow 0}r_0p_0\frac{\sqrt{\tilde{p}_2}}{\sqrt{\tilde{p}_0}}\sqrt{\delta^2 -4} = 1 .\]
		Thus,
		\[\lim_{\varepsilon \rightarrow 0} t_\ast(0,\varepsilon_0) = \infty = T(0),\]
		completing the proof.		
	\end{proof} 
	Next, we show that, for nonzero $\lambda$, $\lim_{\theta \rightarrow 0^+} T(\lambda) = 0$ $\mu$-almost everywhere. First, we state a result Zhong \cite[Lemma 4.4]{pz}, as follows.
	\begin{lemma}[Zhong] \label{pz4.4}
		Let $\mu$ be a nonzero, finite Borel measure on $\mathbb{C}$. Define $I : \mathbb{C} \rightarrow (0, \infty]$ by
		$$ I(\lambda) = \int_\mathbb{C} \frac{1}{|z-\lambda|^2} \,d\mu(z).$$
		Then $I(\lambda)$ is infinite almost everywhere relative to $\mu$.
	\end{lemma}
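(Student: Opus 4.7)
The plan is to combine a layer-cake representation of $I(\lambda)$ with a classical density estimate for finite Borel measures on $\mathbb{C}$. First, using the layer-cake formula together with the substitution $r = t^{-1/2}$, I would rewrite
\[
I(\lambda) = \int_0^\infty \mu(\{z : |z-\lambda|^{-2} > t\}) \, dt = 2\int_0^\infty \frac{\mu(B(\lambda, r))}{r^3} \, dr.
\]
This identity makes it transparent that finiteness of $I(\lambda)$ is governed entirely by how fast $\mu(B(\lambda,r))$ decays as $r \to 0^+$.

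Next, I would observe that if $\liminf_{r \to 0^+} \mu(B(\lambda,r))/r^2 > 0$, then there exist constants $c,r_0 > 0$ with $\mu(B(\lambda,r)) \geq c r^2$ for $r \in (0,r_0)$, and hence $I(\lambda) \geq 2c \int_0^{r_0} dr/r = \infty$. So it suffices to show that the set
\[
A := \{\lambda \in \mathbb{C} : \liminf_{r \to 0^+} \mu(B(\lambda,r))/r^2 = 0\}
\]
has $\mu$-measure zero, which is a standard density statement.

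For this density statement I would run a Besicovitch-type covering argument. Write $A = \bigcap_{c > 0} A_c$ where $A_c = \{\lambda : \liminf_r \mu(B(\lambda,r))/r^2 < c\}$. For every $\lambda \in A_c$ and every $\delta > 0$, there is a radius $r_\lambda \in (0,\delta)$ with $\mu(B(\lambda,r_\lambda)) < c r_\lambda^2$. Applying the Besicovitch covering theorem in $\mathbb{R}^2$ yields a countable subcollection $\{B(\lambda_i, r_i)\}$ covering $A_c$ with overlap bounded by a universal constant $N$. This gives
\[
\mu(A_c) \leq \sum_i \mu(B(\lambda_i, r_i)) < c\sum_i r_i^2 \leq \frac{Nc}{\pi}\big|A_c^{(\delta)}\big|,
\]
where $A_c^{(\delta)}$ denotes the $\delta$-neighborhood of $A_c$ and $|\cdot|$ is Lebesgue measure. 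After localizing so that $A_c$ lies in a fixed bounded set (splitting into countably many bounded pieces if needed), letting $\delta \to 0^+$ and then $c \to 0^+$ forces $\mu(A) = 0$.

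The main obstacle I expect is the covering step: one needs to invoke the Besicovitch covering theorem and convert the bounded-multiplicity property into the estimate $\sum r_i^2 \lesssim |A_c^{(\delta)}|$, which must be done with a little care to keep track of the dimensional constants and the localization. Once the layer-cake identity and this density estimate are in hand, the theorem follows immediately since $\liminf_{r\to 0^+} \mu(B(\lambda,r))/r^2 > 0$ for $\mu$-a.e.\ $\lambda$ and each such $\lambda$ satisfies $I(\lambda) = \infty$.
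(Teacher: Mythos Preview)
The paper does not actually prove this lemma; it merely quotes it as \cite[Lemma 4.4]{pz} (Zhong) and uses it as a black box. So there is no ``paper's own proof'' to compare against.

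Your argument is correct. The layer-cake identity
\[
I(\lambda)=\int_0^\infty \mu\big(\{|z-\lambda|^{-2}>t\}\big)\,dt
=2\int_0^\infty \frac{\mu(B(\lambda,r))}{r^3}\,dr
\]
is valid, and from it the implication ``$\liminf_{r\to 0}\mu(B(\lambda,r))/r^2>0\Rightarrow I(\lambda)=\infty$'' is immediate. The remaining density statement---that $\mu$-almost every point has strictly positive lower $2$-density---is the standard fact you prove: for $\lambda\in A_c$ you have arbitrarily small radii with $\mu(B(\lambda,r))<cr^2$, Besicovitch extracts a bounded-overlap cover, and the overlap bound converts $\sum_i \pi r_i^2\le N\,\big|\bigcup_i B(\lambda_i,r_i)\big|$ into the estimate $\mu(A_c\cap K)\le (Nc/\pi)\,|K^{(\delta)}|$ after localizing to a bounded $K$. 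Letting $c\to 0$ and exhausting $\mathbb{C}$ by bounded sets gives $\mu(A)=0$. The only point to make explicit when you write it up is that you bound $|A_c^{(\delta)}\cap K^{(\delta)}|$ by a constant depending only on $K$ (not on $c$ or $\delta$), so that sending $c\to 0$ genuinely forces $\mu(A\cap K)=0$; this is exactly the localization you flag.
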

	\begin{lemma}\label{log.lem}
	Let $x_n$ and $y_n$ be sequences of positive real numbers such that $x_n\rightarrow\infty$ and such that $y_n\geq a$ for some constant $a>0$. Then
		\[\lim_{n\rightarrow \infty} \frac{\log(x_n) -\log(y_n)}{x_n-y_n} = 0. \]
	Similarly, if $x_n\leq b<\infty$ and $y_n\rightarrow 0$, then
	\[\lim_{n\rightarrow \infty} \frac{\log(x_n) -\log(y_n)}{x_n-y_n} = +\infty. \]
	\end{lemma}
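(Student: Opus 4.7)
The plan is to base both parts on the mean value theorem for the natural logarithm. For any positive $x\neq y$, there exists $\xi=\xi(x,y)$ strictly between $x$ and $y$ such that
\[
\frac{\log x-\log y}{x-y}=\frac{1}{\xi},
\]
and in particular $\xi\in[\min(x,y),\max(x,y)]$. The first claim then reduces to showing that the associated $\xi_n$ tends to $+\infty$, and the second to showing that it tends to $0$.

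For the first claim I would split into two cases according to the size of $y_n$ relative to $x_n$. If $y_n\geq\sqrt{x_n}$, then $\min(x_n,y_n)\geq\sqrt{x_n}\to\infty$, so $\xi_n\to\infty$ by the elementary bound. If instead $y_n\leq\sqrt{x_n}$, then for $n$ sufficiently large $x_n-y_n\geq x_n/2$, while the hypothesis $y_n\geq a$ gives $\log x_n-\log y_n\leq\log x_n-\log a$; hence
\[
\xi_n=\frac{x_n-y_n}{\log x_n-\log y_n}\geq\frac{x_n/2}{\log x_n-\log a}\longrightarrow\infty,
\]
since $x/\log x\to\infty$. In either case $1/\xi_n\to 0$.

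For the second claim, because $\xi_n\leq\max(x_n,y_n)$, it suffices to show that this maximum tends to $0$. I will argue by subsequences: given any subsequence, use Bolzano--Weierstrass to extract a further subsequence along which $x_n$ converges to some $x^\ast\in[0,b]$. If $x^\ast=0$, then $x_n,y_n\to 0$, so $\max(x_n,y_n)\to 0$ and $\xi_n\to 0$. If $x^\ast>0$, then $x_n-y_n\to x^\ast>0$ while $\log x_n\to\log x^\ast$ is finite and $\log y_n\to-\infty$, so $\log x_n-\log y_n\to+\infty$; hence $(\log x_n-\log y_n)/(x_n-y_n)\to+\infty$ directly. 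In both cases the conclusion holds along the subsequence, and since every subsequence has such a further subsequence, the whole sequence satisfies the asserted limit.

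The argument is essentially a calculus exercise. The only point requiring any care is choosing the correct case split so that the hypothesis on $y_n$ (first claim) or on $x_n$ (second claim) is invoked exactly where the coarse bound $\xi_n\in[\min(x_n,y_n),\max(x_n,y_n)]$ is insufficient; I do not foresee a substantive obstacle.
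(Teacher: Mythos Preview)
Your argument is correct, but it follows a different route from the paper's. The paper first proves that the difference quotient $g(x,y)=\frac{\log x-\log y}{x-y}$ is monotonically decreasing in each variable, by differentiating in $x$ and using the elementary bound $\log t\le t-1$. Once this monotonicity is established, both claims drop out in a single line: for the first, $0\le g(x_n,y_n)\le g(x_n,a)\to 0$; for the second, $g(x_n,y_n)\ge g(b,y_n)\to+\infty$.

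Your mean-value-theorem approach is equally valid and arguably more self-contained, since it avoids the derivative computation. The price is the case split (on $y_n\lessgtr\sqrt{x_n}$) in the first part and the subsequence extraction in the second, both of which the paper's monotonicity lemma sidesteps. On the other hand, the paper's monotonicity of $g$ is itself reused later (in the proof of Lemma~\ref{monotoneT}), so proving it here is not wasted effort in that context; your argument, while clean for this lemma in isolation, would not feed into that later step.
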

	\begin{proof}
		We first claim that the function $\frac{\log(x) -\log(y)}{x-y}$ is decreasing in $x$ with $y$ fixed---and therefore, by symmetry, decreasing in $y$ with $x$ fixed. Taking the derivative with respect to $x$, we get
		\begin{align*}
			\frac{d }{dx}\left(\frac{\log(x) -\log(y)}{x-y}\right) &=\frac{(x-y)\frac{1}{x} + \log(\frac{y}{x})}{(x-y)^2} \\
			&\leq \frac{(x-y)\frac{1}{x}  +\frac{y}{x}-1}{(x-y)^2} \\
			&=0,
		\end{align*}
		where we have used the elementary inequality $\log(x) \leq x-1$.
		Using this result, we find, in the first case, that
		\[
		0\leq	\frac{\log(x_n) -\log(y_n)}{x_n-y_n} \leq\frac{\log(x_n) -\log(a)}{x_n-a}\rightarrow 0
		\]
		and, in the second case, that 
		\[
		\frac{\log(x_n) -\log(y_n)}{x_n-y_n}\geq \frac{\log(b) -\log(y_n)}{b-y_n}\rightarrow +\infty,
		\]
		as claimed.
	\end{proof}
	We remind the reader of our standing assumption that $x\neq 0$ so that $\mu\neq\delta_0$.
	\begin{prop}\label{valuet}
		The function $T$ in Definition (\ref{t.def}) satisfies:
		\begin{enumerate}
		\item $\lim_{\theta_0 \rightarrow 0} T(r_0e^{i\theta_0})$ exists for every nonzero $r_0$,
		\item $\lim_{\theta_0 \rightarrow 0} T(r_0e^{i\theta_0}) = 0$ for $\mu$-almost every nonzero $r_0$, and
		\item $\lim_{\lambda_0\rightarrow\infty}T(\lambda_0)=+\infty.$
		\end{enumerate}
	\end{prop}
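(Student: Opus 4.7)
The plan is to parametrize $\lambda_0 = r_0 e^{i\theta_0}$ and exploit the identity $|\xi - \lambda_0|^2 = \xi^2 - 2\xi r_0\cos\theta_0 + r_0^2$, which for $\xi, r_0 > 0$ is monotone decreasing in $|\theta_0|$. By the monotone convergence theorem,
\[
\tilde{p}_k(r_0 e^{i\theta_0}) \nearrow L_k(r_0) := \int_0^\infty \frac{\xi^k}{(\xi-r_0)^2}\,d\mu(\xi) \in (0,\infty]
\]
as $\theta_0 \downarrow 0$; combined with the symmetry $T(\overline{\lambda_0}) = T(\lambda_0)$ already noted after Definition \ref{t.def}, this gives a two-sided limit for each $\tilde{p}_k$, and the question for claim (1) reduces to how the ratio defining $T$ behaves under these limits.

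For claim (1), I would split on whether $L_0(r_0)$ is finite. If $L_0 < \infty$, then since $\mu$ is compactly supported in $[0,\infty)$, $\xi^2$ is bounded on $\supp\mu$, so $L_2 < \infty$ as well, and continuity of the expression in Definition \ref{t.def} (using the removable singularity at $L_2 = L_0 r_0^2$ already noted there) yields a finite limit. If $L_0(r_0) = \infty$, then because $r_0 \neq 0$ one has $\xi^2 \geq (r_0/2)^2$ on the interval $[r_0/2, 3r_0/2]$, and since the tail $\int_{|\xi - r_0| \geq r_0/2}\,d\mu/(\xi - r_0)^2$ is bounded, the divergence of $L_0$ is localized at $\xi = r_0$ and transfers to force $L_2 = \infty$. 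Thus both $\tilde{p}_2(\lambda_0)$ and $\tilde{p}_0(\lambda_0) r_0^2$ tend to infinity as $\theta_0 \to 0$, and the first half of Lemma \ref{log.lem} yields $T \to 0$.

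For claim (2), the key input is Zhong's Lemma \ref{pz4.4}: applied to $\mu$ viewed as a measure on $\mathbb{C}$, it says $L_0(r_0) = I(r_0) = \infty$ for $\mu$-a.e.\ $r_0$. Hence for $\mu$-a.e.\ nonzero $r_0$ the second case of claim (1) applies and the limit equals $0$. For claim (3), as $\lambda_0 \to \infty$ the compactness of $\supp\mu$ forces $|\xi - \lambda_0|^2 \sim r_0^2$ uniformly in $\xi \in \supp\mu$, so $\tilde{p}_0 r_0^2 \to \mu([0,\infty)) = 1$ (bounded above and below) while $\tilde{p}_2 \to 0$. The second half of Lemma \ref{log.lem}, applied with $x_n = \tilde{p}_0 r_0^2$ and $y_n = \tilde{p}_2$, then forces $T \to +\infty$.

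The main technical point is the $L_0 = \infty$ case of (1): one must rule out the possibility that $L_2$ remains bounded, which would otherwise give a divergent limit for $T$ rather than $0$. This is handled cleanly by the assumption $r_0 \neq 0$, which supplies a positive lower bound for $\xi^2$ on a neighborhood of the singular point $\xi = r_0$; combined with the localization of the singularity of $1/(\xi - r_0)^2$ at $\xi = r_0$, this forces $L_2$ to diverge whenever $L_0$ does, so Lemma \ref{log.lem} applies in the required form with both sequences tending to infinity.
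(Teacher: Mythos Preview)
Your proof is correct and follows essentially the same route as the paper: monotone convergence for $\tilde p_k$ as $\theta_0\to 0$, Zhong's Lemma~\ref{pz4.4} for claim~(2), and Lemma~\ref{log.lem} for the infinite/vanishing cases. The only difference is a minor variation in the $L_0=\infty$ case of~(1): you localize the singularity near $\xi=r_0$ to deduce $L_2=\infty$ as well, whereas the paper simply observes that $\tilde p_2$ stays bounded away from zero (since $\mu\neq\delta_0$) and applies Lemma~\ref{log.lem} directly with one sequence tending to infinity and the other bounded below---so your extra step, while correct, is not needed. For claim~(3) your treatment is in fact slightly more explicit than the paper's: you correctly track that the relevant bounded quantity is $\tilde p_0 r_0^2\to 1$ (not $\tilde p_0$ itself), which is exactly what Lemma~\ref{log.lem} requires.
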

	Point (2) says that $T(r_0e^{i\theta_0})$ approaches zero as $\theta_0$ approaches zero, for ``most'' (but not necessarily all) nonzero $r_0$ inside the support of $\mu$. The proposition allows us to extend the definition of $T$ from $\mathbb C\setminus\supp(\mu)$ to all of $\mathbb C\setminus \{0\}$. This extension, however, may not be continuous. 
	\begin{proof}
		We write the quantities $\tilde{p}_k$ in (\ref{pkdef}) in polar coordinates as
		\[
		\int_0^{\infty} \frac{\xi^k}{r_0^2+\xi^2-2\xi r_0\cos(\theta_0)}\,d\mu(\xi).
		\]
		Since $\tilde{p}_k$ is an even function of $\theta_0$, so is $T(r_0 e^{i\theta_0})$. We therefore consider only the limit at $\theta_0$ approaches 0 from above. We then note that $\theta_0$ decreases toward zero, the value of $\tilde{p}_k$ increases. Thus, by the monotone convergence theorem,
		\begin{equation}\label{pklim}
		\lim_{\theta_0\rightarrow 0}\tilde{p}_k=\int_0^{\infty}\frac{\xi^k}{(\xi-r_0)^2}\,d\mu(\xi).
		\end{equation}
		Point (1) of the proposition is then clear in the case that $\tilde{p}_0$ and $\tilde{p}_2$ are finite at $\theta_0=0$. 
		
		We then consider the case when at least one of $\tilde{p}_0$ and $\tilde{p}_2$ is infinite at $\theta_0=0$, in which case, $\tilde{p}_0$ must be infinite. Now, since $\mu\neq\delta_0$, the value of $\tilde{p}_2$ at $\theta_0 = 0$ is not zero. Furthermore, the value of $\tilde{p}_2$ at any $\theta_0$ is bounded below by its (positive) value at $\theta_0=0$. Thus, by (\ref{pklim}) and Lemma \ref{log.lem}, the limit of $T(r_0 e^{i\theta_0})$ exists and is zero.
		We conclude that the limit in Point (1) of the proposition exists for all nonzero $r_0$ and that this limit is zero whenever the value of $\tilde{p}_0$ at $\theta_0=0$ is infinite. But by Lemma \ref{pz4.4}, $\tilde{p}_0=\infty$ (at $\theta_0=0$) for $\mu$-almost every nonzero value of $r_0$.
		
		Finally, if $\lambda_0\rightarrow\infty$, then the quantities $\tilde p_0$ and $\tilde p_2$ in (\ref{pktildedef}) tend to zero, so that by the second part of Lemma \ref{log.lem}, $T(\lambda_0)$ tends to $+\infty$.
	\end{proof}
	\begin{remark}\label{allsupport.rem}
	In some cases, Point 2 of Proposition \ref{valuet} will hold for \emph{every} $r_0$ in the support of $\mu$. By examining the preceding proof, we see that this result will hold if
	\begin{equation}\label{p0infinity}
	\tilde p_0:=\int_0^\infty\frac{1}{(\xi-r_0)^2}\,d\mu(\xi)=\infty,\quad\forall r_0\in\supp(\mu).
	\end{equation}
	The condition (\ref{p0infinity}) will hold if, for example, $\mu$ is a finite sum of measures each of which is either a point mass or is supported on a closed interval with a Lebesgue density that is bounded away from zero on that interval. 
	\end{remark}
	
	\subsection{The domain and its properties}
	In the previous section, we obtain a lifetime $t_\ast$ of the solution to (\ref{Ham1})--(\ref{Ham2}) and its limit $T$ as $\varepsilon_0
 \rightarrow 0$, where $T$ is given outside the support of $\mu$ by Definition \ref{t.def}. Also by Proposition \ref{valuet}, we can extend the domain of $T(\lambda_0)$ to every nonzero $\lambda_0$ by letting $\theta_0$ tend to $0$. 
	
	\begin{definition}\label{sigma.def}
		For all $t>0$, define a domain $\Sigma_t$ by
		\[\Sigma_t = \{\lambda_0 \neq 0 : T(\lambda_0) < t\}.\]
	\end{definition} 
	By the last part of Proposition \ref{valuet}, $\Sigma_t$ is bounded for every $t>0$.

	\begin{corollary}\label{suppoutside}
		For all $t>0$, we have that $\supp(\mu)\setminus\{0\}$ is contained in  $\overline{\Sigma}_t$. In particular, if $0$ is in the support of $\mu$ but outside $\overline{\Sigma}_t$, then $0$ is an isolated point mass of $\mu$.
	\end{corollary}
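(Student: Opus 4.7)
The plan is to leverage Proposition \ref{valuet}(2) together with the closedness of $\overline{\Sigma}_t$ and the definition of the support of a measure.

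First, I would fix $t>0$ and let $r_0 \in \supp(\mu)\setminus\{0\}$ be arbitrary. The goal is to exhibit a sequence in $\Sigma_t$ converging to $r_0$. By Proposition \ref{valuet}(2), the set
\[
E=\{r>0 : \lim_{\theta\to 0}T(re^{i\theta})=0\}
\]
has full $\mu$-measure among the nonzero reals, and in particular $\mu(\supp(\mu)\cap E)=\mu(\supp(\mu)\setminus\{0\})$. Because $r_0\in\supp(\mu)$, every neighborhood of $r_0$ has positive $\mu$-measure; combining this with the previous sentence shows that $\supp(\mu)\cap E$ accumulates at $r_0$. Pick a sequence $r_n\in \supp(\mu)\cap E$ with $r_n\to r_0$. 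For each $n$ the defining property of $E$ gives a $\theta_n>0$ with $T(r_n e^{i\theta_n})<t$, so $r_n e^{i\theta_n}\in \Sigma_t$; we may also insist $\theta_n<1/n$, so that $r_n e^{i\theta_n}\to r_0$. Hence $r_0\in\overline{\Sigma}_t$.

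For a general nonzero $\lambda_0=r_0 e^{i\theta_0}\in\supp(\mu)$, note that $\supp(\mu)\subset[0,\infty)$ because $\mu$ is supported on the spectrum of the nonnegative operator $x$. Thus $\theta_0=0$ automatically, and the argument above applies. This proves $\supp(\mu)\setminus\{0\}\subset\overline{\Sigma}_t$.

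For the second statement, suppose $0\in\supp(\mu)$ but $0\notin\overline{\Sigma}_t$. Since $\overline{\Sigma}_t$ is closed and disjoint from $\{0\}$, there is an open neighborhood $U$ of $0$ with $U\cap\overline{\Sigma}_t=\emptyset$. By the first part, $\supp(\mu)\setminus\{0\}\subset\overline{\Sigma}_t$, so $U\cap\supp(\mu)\subset\{0\}$. Thus $0$ is an isolated point of $\supp(\mu)$, which forces $\mu(\{0\})>0$, i.e., $0$ is an isolated point mass of $\mu$.

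I expect the only subtle step is the accumulation argument in the first paragraph, which requires the interplay between a $\mu$-almost-everywhere statement and the topological description of $\supp(\mu)$; the rest is straightforward from the definitions. The fact that $\supp(\mu)\subset[0,\infty)$ is what lets us reduce the general case of the first claim to the $\theta_0=0$ case treated by Proposition \ref{valuet}(2).
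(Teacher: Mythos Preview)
Your proof is correct and follows essentially the same route as the paper: both rely on Proposition~\ref{valuet}(2) to conclude that $\mu$-almost every nonzero real lies in $\overline{\Sigma}_t$, and then use the standard interplay between a full-measure closed set and the topological support. The paper compresses this into one line (``the closed set $\overline{\Sigma}_t$ contains $\mu$-almost every nonzero real number''), invoking implicitly that the support is the smallest closed set of full measure, whereas you unpack the same implication via an explicit accumulation argument; your detour through points $r_n e^{i\theta_n}$ with $\theta_n>0$ is harmless but unnecessary, since for $r_n\in E$ the extended function already satisfies $T(r_n)=0<t$, placing $r_n$ itself in $\Sigma_t$.
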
  
	\begin{proof}
	By Point (2) of Proposition \ref{valuet}, the closed set $\overline\Sigma_t$ contains $\mu$-almost every nonzero real number.
	\end{proof}
	
	\begin{figure}
		\centering
		\includegraphics[width=0.5\linewidth]{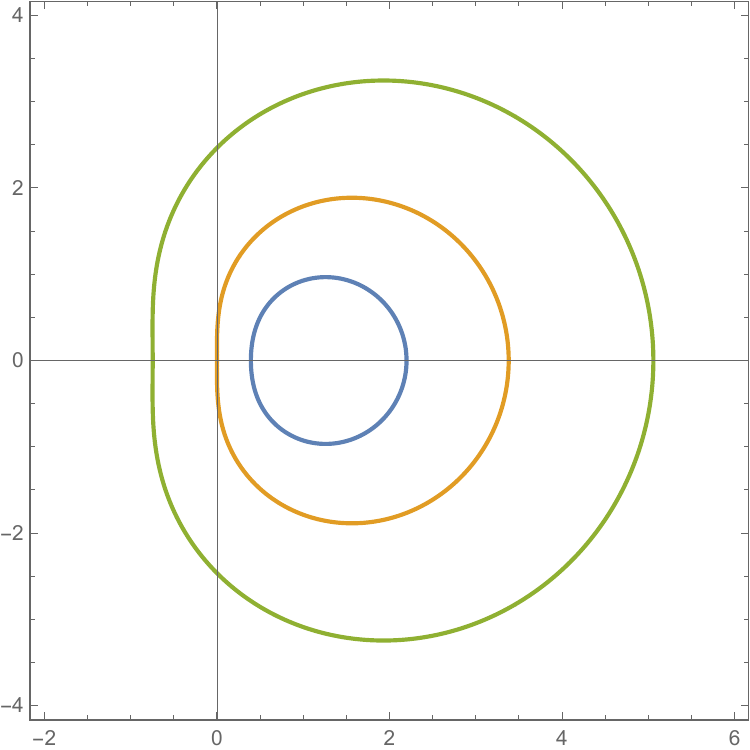}
		\caption{The domain $\compconj{\Sigma}_t$ with $\mu = \frac{1}{2}\delta_0 +\frac{1}{2}\delta_1$, for $t=1$ (blue), $t=2$ (orange), and $t=3$ (green).}
	\end{figure}
	
		\begin{figure}
		\centering
		\includegraphics[width=0.5\linewidth]{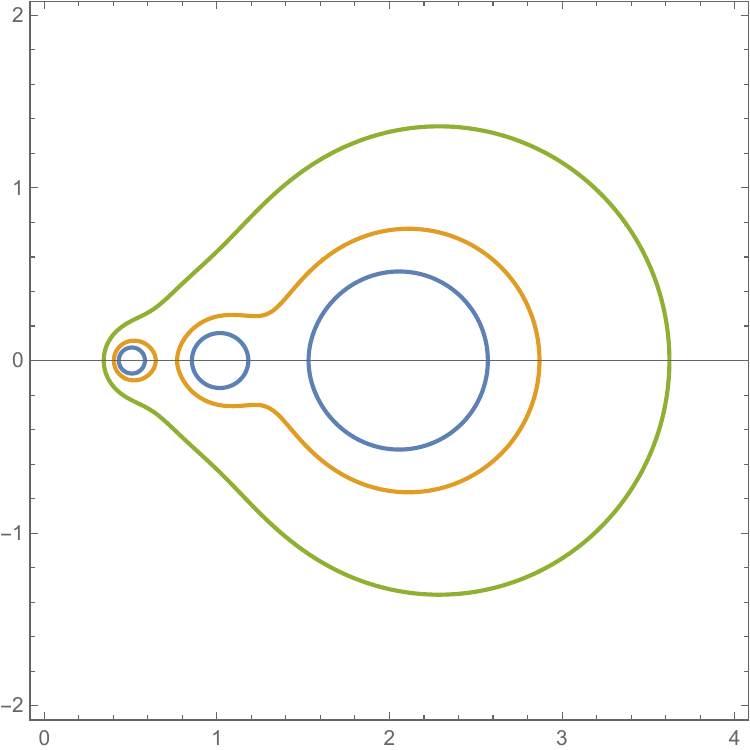}
		\caption{The domain $\compconj{\Sigma}_t$ with $\mu =\frac{1}{5}\delta_{\frac{1}{2}} +\frac{1}{5}\delta_1 + \frac{3}{5}\delta_{2}$, for $t =\frac{1}{10}$ (blue), $t=\frac{1}{5}$ (orange), and $t=\frac{1}{2}$ (green).}
		\label{fig:3pointstest}
	\end{figure}
	The analogous domain to $\Sigma_t$ in \cite{dhk} and \cite{hz} can be described as the set of points $re^{i\theta}$ with $1/r_t(\theta)<r<r_t(\theta)$, for a certain function $r_t$. We now give a similar description of our domain, but with the roles of $r$ and $\theta$ reversed. 
	\begin{definition}
		Let $\pi^+$ be any number larger than $\pi$. Define $\theta_t : (0, \infty) \rightarrow [0, \pi] \cup\{ \pi^+\}$ by 
		$$\theta_t(r_0)= \inf\{\theta_0 \in (0, \pi] : T(r_0e^{i\theta_0}) \geq t\},$$
		if the set is nonempty. Otherwise, $\theta_t(r_0) = \pi^+$.
	\end{definition}

	\begin{prop}\label{equidomain}
		The domain $\Sigma_t$ in Definition \ref{sigma.def} can be characterized as
		\[\Sigma_t =  \{r_0e^{i\theta_0} : r_0 \neq 0 \text{ and } |\theta_0|<\theta_t(r_0)\}.\]	
	\end{prop}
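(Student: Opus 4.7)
The plan is to reduce the proposition to a single monotonicity statement: for each fixed $r_0>0$, the function $\theta_0\mapsto T(r_0 e^{i\theta_0})$ is strictly increasing on $(0,\pi)$ and even in $\theta_0$. Granted this, the slice of $\Sigma_t$ at radius $r_0$ is the symmetric interval $(-\theta_t(r_0),\theta_t(r_0))$, or all of $(-\pi,\pi]$ when $\theta_t(r_0)=\pi^+$, which is exactly the slice of the claimed set.

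The monotonicity proceeds in two steps. First, since
\[
|\xi-r_0 e^{i\theta_0}|^2 = \xi^2+r_0^2-2\xi r_0\cos\theta_0
\]
is strictly increasing in $\theta_0\in(0,\pi)$ for every $\xi>0$, and the standing assumption $\mu\neq\delta_0$ forces $\mu$ to have positive mass on $(0,\infty)$, both $\tilde p_0(\theta_0)$ and $\tilde p_2(\theta_0)$ are strictly decreasing on $(0,\pi)$. Second, the function $F(u,v)=(\log u-\log v)/(u-v)$ (extended to $1/u$ at $u=v$) is strictly decreasing in each of $u,v>0$; this is essentially the derivative bound carried out in the proof of Lemma \ref{log.lem}, where the inequality $\log z\leq z-1$ is strict for $z\neq 1$. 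Writing $T(r_0 e^{i\theta_0})=F(\tilde p_2(\theta_0),\,\tilde p_0(\theta_0)r_0^2)$ and composing these monotonicities yields strict increase of $T$ in $\theta_0\in(0,\pi)$. Evenness is the symmetry $T(\overline{\lambda_0})=T(\lambda_0)$ already noted after Definition \ref{t.def}.

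With monotonicity and evenness in hand, continuity of $T$ on $(0,\pi]$ (which follows from dominated convergence for the $\tilde p_k$'s, whose integrands are uniformly bounded once $\theta_0$ stays in $(0,\pi]$) gives $\{\theta_0\in(0,\pi]:T(r_0 e^{i\theta_0})<t\}=(0,\theta_t(r_0))$, and evenness extends this to the symmetric interval. The only genuine bookkeeping is at the boundary $\theta_0=0$: the definition of $\Sigma_t$ uses the boundary-limit extension of $T$ from Proposition \ref{valuet}, so $r_0\in\Sigma_t$ iff $\lim_{\theta_0\to 0^+}T(r_0 e^{i\theta_0})<t$. By monotonicity this limit equals $\inf_{\theta_0>0}T(r_0 e^{i\theta_0})$, and a short argument shows this infimum is $<t$ precisely when $\theta_t(r_0)>0$, which is exactly the condition placed on the point $\theta_0=0$ by $|\theta_0|<\theta_t(r_0)$. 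That final matching is the main obstacle I anticipate; the structural content of the proof is entirely the monotonicity argument above.
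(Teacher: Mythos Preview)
Your proposal is correct. Both your argument and the paper's rest on the same key fact---that $\theta_0\mapsto T(r_0 e^{i\theta_0})$ is strictly increasing on $(0,\pi]$ and even---and then read off the radial slices of $\Sigma_t$. The difference lies in how the monotonicity is established. The paper's Lemma~\ref{monotoneT} computes $T_\theta$ directly, writes $T$ in terms of $R=\tilde p_0 r_0^2/\tilde p_2$, and splits into two cases according to the sign of $\bigl(1/(\tilde p_2(R-1))\bigr)_\theta$, applying the two halves of $1-1/x\le\log x\le x-1$ separately. Your route avoids this case split by factoring $T=F(\tilde p_2,\tilde p_0 r_0^2)$ with $F(u,v)=(\log u-\log v)/(u-v)$, then combining the elementary fact that each $\tilde p_k$ is strictly decreasing in $\theta_0$ with the strict monotonicity of $F$ in each variable (implicit in the proof of Lemma~\ref{log.lem}). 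This is a cleaner organization of essentially the same inequality content. You also handle the endpoint $\theta_0=0$ explicitly via the limit extension from Proposition~\ref{valuet}, which the paper's proof passes over; your treatment there is correct and worth keeping.
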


		Figure \ref{fig:domainex} illustrates the meaning of three cases $\theta_t<\pi$, $\theta_t=\pi$, and $\theta_t=\pi^+$. When $\theta_t(r_0)=\pi^+$, the entire circle of radius $r_0$ (centered at the origin) is contained in $\Sigma_t$. When $\theta_t(r_0)=\pi$, the entire circle of radius $r_0$, \emph{except} the point on the negative real axis, is contained in $\Sigma_t$.
		
		The following result is the key to proving Proposition \ref{equidomain}.
	\begin{lemma}\label{monotoneT}
		For each $\lambda_0$ with $\Im(\lambda_0) \neq 0$, the sign of $\partial T(\lambda_0)/\partial\theta$ is the same as the sign of $\Im(\lambda_0)$. 
	\end{lemma}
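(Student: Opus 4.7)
My plan is to factor the problem through two clean monotonicity facts. Under the hypothesis $\Im\lambda_0\neq 0$ we have $\lambda_0\notin\supp(\mu)\subset\mathbb R$, so the quantities
\[
\tilde p_k(r_0,\theta_0)=\int_0^\infty\frac{\xi^k}{\xi^2+r_0^2-2\xi r_0\cos\theta_0}\,d\mu(\xi)
\]
are smooth in $\theta_0$. I would rewrite Definition~\ref{t.def} as
\[
T(\lambda_0)=G\bigl(\tilde p_2,\,\tilde p_0 r_0^2\bigr),\qquad G(A,B)=\frac{\log A-\log B}{A-B},\quad G(A,A)=\tfrac{1}{A},
\]
so that $G=1/L$ where $L$ is the classical logarithmic mean. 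Because $G$ extends analytically across the diagonal $A=B$, the two cases in Definition~\ref{t.def} are handled uniformly.

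The first step is to show that $G$ is strictly decreasing in each argument on $(0,\infty)^2$. A direct computation gives
\[
\frac{\partial L}{\partial A}=\frac{A(\log A-\log B)-(A-B)}{A(\log A-\log B)^2},
\]
and with $u=A/B$ the numerator equals $B(u\log u-u+1)$, so positivity reduces to the elementary inequality $\log u>1-1/u$ for $u>0$, $u\neq 1$. By the symmetry $L(A,B)=L(B,A)$ we also obtain $\partial L/\partial B>0$, hence $\partial G/\partial A<0$ and $\partial G/\partial B<0$ off the diagonal, and by continuity everywhere on $(0,\infty)^2$.

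The second step is differentiation under the integral, valid because $\lambda_0\notin\supp(\mu)$ keeps the denominator bounded below:
\[
\frac{\partial\tilde p_k}{\partial\theta_0}=-\int_0^\infty\frac{2\xi r_0\sin\theta_0\cdot\xi^k}{(\xi^2+r_0^2-2\xi r_0\cos\theta_0)^2}\,d\mu(\xi).
\]
The standing assumption $\mu\neq\delta_0$ makes this integral nonzero whenever $\sin\theta_0\neq 0$, and its sign is opposite to that of $\sin\theta_0$. The chain rule then gives
\[
\frac{\partial T}{\partial\theta_0}=\frac{\partial G}{\partial A}\cdot\frac{\partial\tilde p_2}{\partial\theta_0}+r_0^2\,\frac{\partial G}{\partial B}\cdot\frac{\partial\tilde p_0}{\partial\theta_0};
\]
each summand is the product of a strictly negative factor with a factor whose sign is opposite to $\sin\theta_0$, so each summand, and hence $\partial T/\partial\theta_0$, has the same sign as $\sin\theta_0=\Im\lambda_0/r_0$. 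Since $r_0>0$, this is the desired conclusion.

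The main computational item is the logarithmic-mean monotonicity; everything else is routine. I do not foresee a genuine obstacle, only the bookkeeping to ensure that the chain-rule argument applies uniformly across the locus $\tilde p_0 r_0^2=\tilde p_2$.
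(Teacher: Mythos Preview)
Your argument is correct and is organized more cleanly than the paper's. Both proofs rest on the same elementary inequality $\log u\ge 1-1/u$, but you package it as monotonicity of the reciprocal logarithmic mean $G(A,B)=(\log A-\log B)/(A-B)$ and then apply the chain rule once, whereas the paper writes $T=\frac{\log R}{\tilde p_2(R-1)}$ with $R=\tilde p_0 r_0^2/\tilde p_2$, differentiates as a product, and splits into two cases according to the sign of $\bigl(\tfrac{1}{\tilde p_2(R-1)}\bigr)_\theta$, using $\log R\ge 1-1/R$ in one case and $\log R\le R-1$ in the other. Your route avoids the case split and makes the structure transparent; the paper's route is slightly more hands-on but requires no appeal to properties of the logarithmic mean.

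One small point: your continuity argument for the strict sign of $\partial G/\partial A$ and $\partial G/\partial B$ on the diagonal $A=B$ only yields $\le 0$, not $<0$, and you need strict negativity of at least one of them to conclude $\partial T/\partial\theta_0\neq 0$ there. This is easily repaired: by symmetry $\partial G/\partial A\big|_{A=B}=\partial G/\partial B\big|_{A=B}$, and their sum equals $\frac{d}{dA}G(A,A)=\frac{d}{dA}(1/A)=-1/A^2$, so each equals $-1/(2A^2)<0$.
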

	The lemma says that $T$ increases as we move away from the positive $x$-axis in the radial direction.
	
	\begin{figure}
		\centering
		\includegraphics[width=0.7\linewidth]{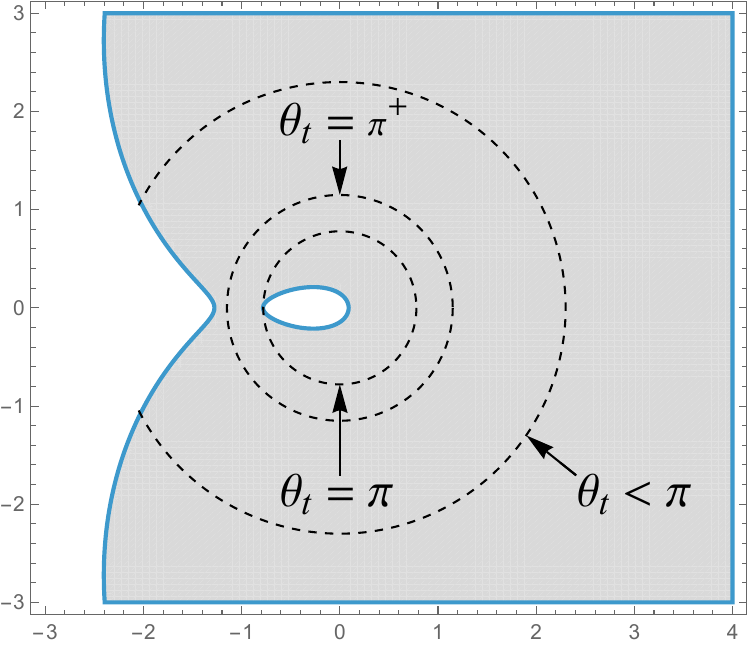}
		\caption{A portion of the domain $\Sigma_t$ with $\mu=\delta_1$ and $t=4.02$}
		\label{fig:domainex}
	\end{figure}

	\begin{proof} 

	We use the subscript notation $f_\theta$ for the partial derivative of a function $f$ in the angular direction. Since $T(\overline{\lambda_0})=T(\lambda_0)$, we see that $T(r_0 e^{i\theta})$ is an even function of $\theta$. It therefore suffices to show that $T_\theta$ is positive when $\Im(\lambda_0)$ is positive. 
	Let $$R = \frac{\tilde{p}_0r_0^2}{\tilde{p}_2}.$$ Note that $T$ can be computed as
		$$ T(r_0e^{i\theta}) = \frac{1}{\tilde{p}_2(R-1)}\log(R).$$
		
		We first make a preliminary calculation:
		\begin{equation*}
		T_\theta = \left(\frac{1}{\tilde{p}_2 (R-1)}\right)_\theta \log(R)+\frac{1}{\tilde{p}_2(R-1)}\frac{R_\theta}{R}.
		\end{equation*}
		We then use the inequality
		\begin{equation}\label{logineq}1-1/x\leq \log x \leq x-1,\end{equation}
		which may be proved by writing $\log x$ as the integral of $1/y$ from 1 to $x$ and then bounding $1/y$ between 1 and $1/x$ (for $x<1$) and between $1/x$ and 1 (for $x>1$). 
		
		In the case that $\left(\frac{1}{\tilde{p}_2 (R-1)}\right)_\theta$ is positive, we use the first inequality in (\ref{logineq}) to give	
		\begin{equation*}
		T_\theta\geq \left(\frac{1}{\tilde{p}_2 (R-1)}\right)_\theta (1-1/R)+\frac{1}{\tilde{p}_2(R-1)}\frac{R_\theta}{R}.
		\end{equation*}
		Simplifying this result gives
		\begin{align*}
			T_\theta \geq   -\frac{(\tilde{p}_2)_\theta}{\tilde{p}_2^2R}.
		\end{align*}
		But
		\begin{align*}
			(\tilde{p}_2)_\theta = -\Im(\lambda_0)\int_0^\infty \frac{\xi^3}{(\xi^2 -2\xi r_0\cos \theta + r_0^2)^2} \,d\mu(\xi).
		\end{align*}
		Thus, $T_\theta$ is positive when $\Im(\lambda_0)$ is positive and we are done in this case.
		
		In the case that $\left(\frac{1}{\tilde{p}_2 (R-1)}\right)_\theta$ is negative, we use the second inequality in (\ref{logineq}) to give
		\begin{equation*}
		T_\theta\geq \left(\frac{1}{\tilde{p}_2 (R-1)}\right)_\theta (R-1)+\frac{1}{\tilde{p}_2(R-1)}\frac{R_\theta}{R}.
		\end{equation*}
		Simplifying this result gives
		\begin{align*}
		T_\theta &\geq - \frac{[(\tilde{p}_2)_\theta R+\tilde{p}_2R_\theta]}{\tilde{p}_2^2 R} \\
			&= -\frac{(\tilde{p}_2R)_\theta}{\tilde{p}_2^2R}.
		\end{align*}
		But since $\tilde{p}_2R = \tilde{p}_0r_0^2$, we compute that
		\begin{align*}
			[\tilde{p}_2R]_\theta = -\Im(\lambda_0)\int_0^\infty \frac{r_0^2\xi}{(\xi^2 -2\xi r_0\cos \theta + r_0^2)^2} \,d\mu(\xi).
		\end{align*} 
		Thus, $T_\theta$ is positive when $\Im(\lambda_0)$ is positive and we are done in this case.
	\end{proof}

	\begin{proof}[Proof of Proposition $\ref{equidomain}$.]
		Suppose that $\lambda_0 =r_0e^{i\theta_0}$ with $\theta_0 \neq 0$. Then by the monotonicity of $T(r_0e^{i\theta_0})$ with respect to $\theta_0$ (Lemma \ref{monotoneT}), $|\theta_0| < \theta_t(r_0)$ if and only if $T(\lambda_0) <t$. \qedhere
	\end{proof}

		\begin{prop}\label{openness}
		For $t>0$,	$\Sigma_t$ is open.
	\end{prop}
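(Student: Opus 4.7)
The plan is to show that the complement $\{\lambda_0 \in \mathbb{C}^\ast : T(\lambda_0) \geq t\}$ is closed in $\mathbb{C}^\ast = \mathbb{C}\setminus\{0\}$. So I would take a convergent sequence $\lambda_n \to \lambda$ in $\mathbb{C}^\ast$ with $T(\lambda_n) \geq t$, and aim to conclude $T(\lambda) \geq t$. The subtle feature to work around is that $T$ is analytic on $\mathbb{C}\setminus\supp(\mu)$ but on $\supp(\mu)\setminus\{0\}$ it is defined only as the one-sided angular limit from Proposition \ref{valuet}, which in general is discontinuous. The argument then splits into two cases depending on whether $\lambda$ lies in $\supp(\mu)$.

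If $\lambda \notin \supp(\mu)$, I would simply use that $T$ is analytic on a neighborhood of $\lambda$ disjoint from $\supp(\mu)$, so $\lambda_n$ eventually lies in this neighborhood and continuity forces $T(\lambda) = \lim_n T(\lambda_n) \geq t$.

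If $\lambda = r_0 \in \supp(\mu)\setminus\{0\}$ lies on the positive real axis, I would transfer the inequality off the axis via Lemma \ref{monotoneT}. Writing $\lambda_n = r_n e^{i\theta_n}$ with $r_n \to r_0$ and $\theta_n \to 0$, the lemma together with the symmetry $T(\overline{\lambda_0}) = T(\lambda_0)$ makes $\theta \mapsto T(r_n e^{i\theta})$ increasing in $|\theta|$ on $(0,\pi]$, and the angular-limit definition of the extension identifies $T(r_n)$ with its $\theta \to 0^+$ infimum. Hence $T(r_n e^{i\theta}) \geq T(\lambda_n) \geq t$ for any $\theta$ with $|\theta| \geq |\theta_n|$ and $\theta \neq 0$. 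Fix a small $\theta > 0$; since $\theta_n \to 0$, eventually $|\theta_n| \leq \theta$, and so $T(r_n e^{i\theta}) \geq t$. Because $r_n e^{i\theta} \to r_0 e^{i\theta} \notin \supp(\mu)$, continuity of $T$ off the support then yields $T(r_0 e^{i\theta}) \geq t$. Finally, sending $\theta \to 0^+$ and invoking the definition of the extension gives $T(r_0) = \lim_{\theta \to 0^+} T(r_0 e^{i\theta}) \geq t$.

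The main obstacle is exactly this possible discontinuity of the extended $T$ on $\supp(\mu)$. Lemma \ref{monotoneT} is the tool that circumvents it: it lets one promote the inequality $T(\lambda_n) \geq t$ onto an off-axis circular arc where $T$ is analytic, pass to the limit there, and only then return to the axis through the one-sided angular limit that defines the extension. In particular, the two-stage limit ($n \to \infty$ off the axis, then $\theta \to 0^+$ onto the axis) is what makes the axial case work despite the lack of joint continuity.
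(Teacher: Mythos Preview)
Your argument is correct. Both your proof and the paper's rely on the same two ingredients---continuity of $T$ away from $\supp(\mu)$ and the angular monotonicity of Lemma~\ref{monotoneT}---but you package them differently. The paper introduces the auxiliary function $\theta_t(r_0)$, proves a continuity lemma for it (Lemma~\ref{conttheta}), and then reads off openness from the polar description $\Sigma_t = \{r_0 e^{i\theta_0} : |\theta_0| < \theta_t(r_0)\}$ of Proposition~\ref{equidomain}. You instead run a direct sequential argument on the complement, using monotonicity to push the inequality $T(\lambda_n)\geq t$ onto an off-axis arc where $T$ is continuous, and then taking the angular limit back to the axis. Your route is shorter and avoids the $\theta_t$ machinery for this particular proposition; the paper's route has the advantage that $\theta_t$ and its continuity are reused later (e.g., in Proposition~\ref{boundary}). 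One small point worth tightening: when $\theta_n = 0$ and $r_n \notin \supp(\mu)$, the value $T(r_n)$ is given directly by Definition~\ref{t.def} rather than by the angular-limit extension, but continuity of $T$ at such $r_n$ still identifies $T(r_n)$ with $\lim_{\theta\to 0^+} T(r_n e^{i\theta})$, so the inequality $T(r_n e^{i\theta}) \geq T(r_n)$ goes through in that case as well.
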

	To show that $\Sigma_t$ is open, we first show that $\theta_t$ is continuous in the following sense.
	\begin{lemma}\label{conttheta}
		Let $r_0 >0$. We have 
		\begin{itemize}
			\item[$(1)$] If $\theta_t(r_0) = \pi^+$, there exists a neighborhood $B$ of $r_0$ such that $\theta_t(s) = \pi^+$ for all $s \in B$.
			\item[$(2)$] If $\theta_t(r_0) = \pi$, then for all $\varepsilon >0$, there exists a neighborhood $B$ of $r_0$ such that $\theta_t(s) \in (\pi-\varepsilon,\pi]$ or $\theta_t(s) = \pi^+$ for all $s \in B$.
			\item[$(3)$] If $\theta_t(r_0) < \pi$, then for all $\varepsilon>0$,  there exists a neighborhood $B$ of $r_0$ such that $|\theta_t(r_0) - \theta_t(s)| <\varepsilon$ for all $s \in B_3$.
		\end{itemize}
		
	\end{lemma}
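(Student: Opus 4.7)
The plan rests on two inputs already established in the excerpt. First, by Lemma~\ref{monotoneT}, for each fixed $r>0$ the map $\theta\mapsto T(re^{i\theta})$ is strictly increasing on $(0,\pi)$. Second, for any $s>0$ and any $\theta\in(0,\pi]$, the point $se^{i\theta}$ lies in $\mathbb{C}\setminus[0,\infty)\supseteq \mathbb{C}\setminus\supp(\mu)$, so $T$ is analytic---hence jointly continuous---at $se^{i\theta}$. My first step is to translate the definition of $\theta_t(r)$ into a trichotomy on the single number $T(-r)$: combining strict monotonicity with continuity at $\theta=\pi$, one gets $\theta_t(r)=\pi^+$ iff $T(-r)<t$; $\theta_t(r)=\pi$ iff $T(-r)=t$; and $\theta_t(r)=\alpha\in[0,\pi)$ with $T(re^{i\alpha})=t$ (whenever $\alpha>0$) iff $T(-r)>t$. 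This reduces each of (1)--(3) to a routine continuity-plus-monotonicity perturbation argument.

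For (1) I would use $T(-r_0)<t$ together with continuity of $T$ at $-r_0$ to produce a neighborhood $B$ of $r_0$ with $T(-s)<t$, giving $\theta_t(s)=\pi^+$ on $B$. For (2) I would fix $\varepsilon>0$ and test at the angle $\theta_1=\pi-\varepsilon/2$: strict monotonicity gives $T(r_0 e^{i\theta_1})<T(-r_0)=t$; continuity propagates $T(se^{i\theta_1})<t$ to a neighborhood $B$ of $r_0$; and strict monotonicity in $\theta$ for each such $s$ then forces $\theta_t(s)\in(\theta_1,\pi]\cup\{\pi^+\}\subset(\pi-\varepsilon,\pi]\cup\{\pi^+\}$. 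For (3) with $\alpha=\theta_t(r_0)\in(0,\pi)$ I would choose small $\varepsilon>0$ with $\alpha\pm\varepsilon\in(0,\pi]$, use strict monotonicity to get $T(r_0 e^{i(\alpha-\varepsilon)})<t<T(r_0 e^{i(\alpha+\varepsilon)})$, push both strict inequalities to a common neighborhood $B$ of $r_0$ by continuity, and squeeze $\theta_t(s)\in[\alpha-\varepsilon,\alpha+\varepsilon]$ using monotonicity and the definition of the infimum.

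I do not anticipate a serious obstacle; the whole argument is an intermediate-value-plus-continuity squeeze. The only fiddly points are (i) the endpoint case $\alpha+\varepsilon=\pi$ in (3), where strict monotonicity is only asserted on the open interval $(0,\pi)$ and must be combined with continuity at $\theta=\pi$ to still conclude $T(r_0 e^{i\pi})>T(r_0 e^{i\alpha})$; and (ii) the degenerate sub-case $\alpha=0$ of (3), in which one cannot test below $\alpha$---but here the lower bound $\theta_t(s)\geq 0$ is free, and the upper bound $\theta_t(s)\leq\varepsilon$ follows by evaluating at any $\theta^*\in(0,\varepsilon)$, since strict monotonicity on $(0,\pi)$ together with $\lim_{\theta\to 0^+}T(r_0 e^{i\theta})\geq t$ forces $T(r_0 e^{i\theta^*})>t$, and continuity then extends this strict inequality to a neighborhood of $r_0$.
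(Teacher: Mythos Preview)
Your proposal is correct and follows essentially the same route as the paper: both proofs rest on the joint continuity of $T$ off $[0,\infty)$ and the strict monotonicity in $\theta$ from Lemma~\ref{monotoneT}, then test at angles $\theta_t(r_0)\pm\varepsilon$ (or $\pi-\varepsilon$) and propagate the strict inequalities to a neighborhood of $r_0$. Your treatment of the endpoint cases $\alpha+\varepsilon=\pi$ and $\alpha=0$ is slightly more explicit than the paper's, which simply restricts $\varepsilon$ at the outset (``it is harmless to assume'') to avoid these edge situations, but the arguments are otherwise interchangeable.
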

	\begin{proof} The function $T$ in Definition \ref{t.def} is continuous outside $\supp(\mu)$ and, in particular, outside $[0, \infty)$. 
	
	First, assume $\theta_t(r_0) = \pi^+$. Note that by monotonicity of $T(r_0e^{i\theta_0})$ with respect to $\theta_0$ (Lemma $\ref{monotoneT}$), $\theta_t(r_0) = \pi^+$ if and only if  $T(r_0e^{i\pi}) <t$. By the continuity of $T$, there exists a neighborhood $B$ of $r_0$ such that $T(s e^{i\pi}) <t$ for all $s \in B_1$. Hence $\theta_t(s) = \pi^+$ for all $s \in B$.
		
		Next, assume $\theta_t(r_0) = \pi$. Let $\varepsilon>0$, where it is harmless to assume $\varepsilon<\pi$. Then, by the definition of $\theta_t$, we must have $T(r_0 e^{i(\pi-\varepsilon)})<t$. Thus, by the continuity of $T$, there is some neighborhood $B$ of $r_0$ such that $T(s e^{i(\pi-\varepsilon)})<t$ for all $s\in B$. Finally, by the monotonicity of $T$, we must have $\theta_t(s)>\pi-\varepsilon$ for $s\in B$, meaning that either $\theta_t(s)\in (\pi-\varepsilon,\pi]$ or $\theta_t(s)=\pi^+$.
		
		Finally, assume $\theta_t(r_0) < \pi.$ Let $\varepsilon >0$, where it is harmless to assume that $\theta_t(r_0)+\varepsilon<\pi$ and (in the case $\theta_t(r_0)>0$) that $\theta_t(r_0)-\varepsilon>0$. Then by monotonicity of $T$, we have $T(r_0e^{i(\theta_t(r_0)+\varepsilon)}) > t$. Since $T$ is continuous, there exists a neighborhood $U$ of $r_0$ such that 
		$$T(s e^{i(\theta_t(r_0)+\varepsilon)}) > t, \quad \forall s \in U.$$
		Hence, $\theta_t(s) \leq \theta_t(r_0) + \varepsilon$, for all $s \in U.$ 
		
		In the case $\theta_t(r_0) =0$, we may then take $B=U$. In the case $\theta_t(r_0) \neq 0$, the monotonicity of $T$ shows that $T(r_0 e^{i(\theta_0(r_0)-\varepsilon)}) < t.$ Since $T$ is continuous, there exists a neighborhood $V$ of $r_0$ such that 
		$$T(s e^{i(\theta_0(r_0)-\varepsilon)}) < t, \quad \forall s \in V.$$
		Hence, $\theta_t(s) \geq \theta_t(r_0) - \varepsilon$,  for all $s \in V.$ Thus, we may take $B=U\cap V$.
	\end{proof}

	\begin{proof}[Proof of Proposition $\ref{openness}$.]
		By Proposition \ref{equidomain}, $\Sigma_t$ is the set $r_0 e^{i\theta_0}$ with $r_0\neq 0$ and $\vert\theta_0\vert<\theta_t(r_0)$. Fix $r_0e^{i\theta_0}\in\Sigma_t$. If $\theta_t(r_0)=\pi^+$, then by Point (1) of Lemma \ref{conttheta} there is a neighborhood $B$ of $r_0$ on which $\theta_t=\pi^+$, in which case all $s e^{i\theta}$ with $s\in B$ belong to $\Sigma_t$.
		
		If $\theta_t(r_0)\neq\pi^+$, set $\varepsilon=(\theta_t(r_0)-\vert\theta_0\vert)/2$. Then by Points (2) and (3) of Lemma \ref{conttheta}, there is a neighborhood $B$ of $r_0$ on which $$\theta_t(s)>\theta_t(r_0)-\varepsilon=\theta_0+\varepsilon.$$ Then all $s e^{i\theta}$ with $s\in B$ and $\theta$ within $\varepsilon$ of $\theta_0$ are in $\Sigma_t$.
	\end{proof}
	
	\begin{figure}[t]
	\centering
	\includegraphics[scale=0.6]{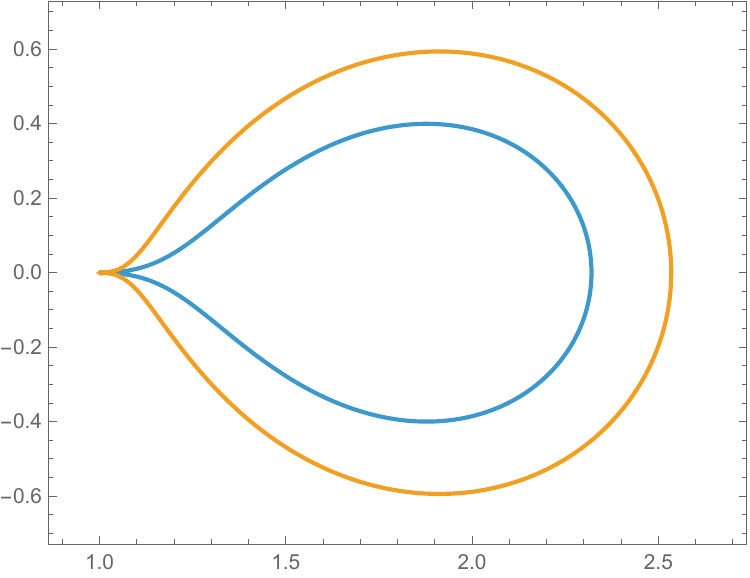}
	\caption{The domain $\overline\Sigma_t $ with $d\mu(\xi)=1_{[1,2]}(\xi-1)^2/3\,d\xi$ for $t=1/2$ (blue) and $t=1$ (orange). The function $T$ has a value of approximately 1.91 at the point 1, which is on the boundary of both domains}
	\label{cusp.fig}
	\end{figure}
	
	\begin{prop}\label{boundary}
	For nonzero $\lambda_0=r_0 e^{i\theta_0} \in \partial\Sigma_t$, we have
		\begin{itemize}
			\item[(1)] $\theta_t(r_0) \neq \pi^+$ and $\lambda_0 = r_0 e^{i\theta_t(r_0)}$,
			\item[(2)] $T(\lambda_0) \geq t$,
			\item[(3)] if $\theta_0 \neq 0$ or $\theta_0=0$ but $r_0\notin\supp(\mu)$, then $T(\lambda_0) =t$.
		\end{itemize}
		Furthermore, if  $\arg(\lambda_0) \neq 0$ and $T(\lambda_0) =t$, then $\lambda_0 \in \partial\Sigma_t$.   
	\end{prop}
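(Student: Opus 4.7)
The plan is to work throughout with the characterization in Proposition \ref{equidomain}, writing $\Sigma_t=\{r_0 e^{i\theta_0}:|\theta_0|<\theta_t(r_0)\}$, and combining this with the openness of $\Sigma_t$ (Proposition \ref{openness}), the semi-continuity of $\theta_t$ (Lemma \ref{conttheta}), the monotonicity of $T$ in $|\theta|$ (Lemma \ref{monotoneT}), the evenness $T(\overline{\lambda_0})=T(\lambda_0)$, and the continuity of $T$ on $\mathbb{C}\setminus[0,\infty)$ together with its continuity at points of $(0,\infty)\setminus\supp(\mu)$.

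For part (1), since $\Sigma_t$ is open, $\lambda_0\notin\Sigma_t$ forces $|\theta_0|\geq\theta_t(r_0)$ at once. To rule out $\theta_t(r_0)=\pi^+$, I would invoke Lemma \ref{conttheta}(1): that would produce a neighborhood $B$ of $r_0$ on which $\theta_t\equiv\pi^+$, so every $s e^{i\phi}$ with $s\in B$ would lie in $\Sigma_t$, placing $\lambda_0$ in the interior---a contradiction. For the reverse inequality I would argue by contradiction: if $|\theta_0|>\theta_t(r_0)$, then since $|\theta_0|\leq\pi$ we must have $\theta_t(r_0)<\pi$, and Lemma \ref{conttheta}(3) applied with $\varepsilon=(|\theta_0|-\theta_t(r_0))/2$ gives a neighborhood of $r_0$ on which $\theta_t(s)<|\theta_0|-\varepsilon$; combining with a thin angular window around $\theta_0$ produces a neighborhood of $\lambda_0$ entirely outside $\Sigma_t$, contradicting $\lambda_0\in\overline{\Sigma}_t$. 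This pins $|\theta_0|=\theta_t(r_0)$, and the evenness of $T$ lets me write $\lambda_0=r_0 e^{i\theta_t(r_0)}$.

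With $|\theta_0|=\theta_t(r_0)$ in hand, part (2) splits on whether $\theta_t(r_0)=0$. If $\theta_t(r_0)>0$, then $T$ is continuous at $\lambda_0$, and the infimum definition of $\theta_t(r_0)$ combined with the monotonicity of $T$ yields $T(\lambda_0)\geq t$. If $\theta_t(r_0)=0$, then monotonicity forces $T(r_0 e^{i\theta})\geq t$ for every $\theta\in(0,\pi]$, and sending $\theta\to 0^+$ uses the definition of $T$ on the positive real axis (Proposition \ref{valuet}(1)) to conclude $T(\lambda_0)\geq t$. For part (3), the hypothesis on $(\theta_0,r_0)$ is precisely what makes $T$ continuous at $\lambda_0$; then any sequence $\lambda_n\to\lambda_0$ inside $\Sigma_t$ satisfies $T(\lambda_n)<t$, so $T(\lambda_0)\leq t$, and combining with (2) yields equality.

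For the converse, assume $\arg(\lambda_0)\neq 0$ and $T(\lambda_0)=t$; by evenness I may take $\Im(\lambda_0)>0$, so Lemma \ref{monotoneT} gives $T_\theta(\lambda_0)>0$, which means $T(r_0 e^{i\theta})<t$ for $\theta$ slightly below $\arg(\lambda_0)$. These points lie in $\Sigma_t$ and approach $\lambda_0$, giving $\lambda_0\in\overline{\Sigma}_t$, while $T(\lambda_0)=t$ precludes $\lambda_0\in\Sigma_t$. The main technical care will be in part (1), in coordinating the three regimes of Lemma \ref{conttheta} and in handling the boundary possibility $\theta_t(r_0)=\pi$ (where $\lambda_0=-r_0$ lies on the negative real axis, but $T$ is still continuous there because $-r_0\notin\supp(\mu)$), and in part (2) where the $\theta_0=0$ case has to be pushed through the defining $\theta\to 0^+$ limit rather than any direct continuity statement.
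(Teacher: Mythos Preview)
Your proposal is correct and follows essentially the same approach as the paper: both arguments hinge on the characterization of $\Sigma_t$ via $\theta_t$, the openness of $\Sigma_t$, the continuity of $\theta_t$ from Lemma~\ref{conttheta}, and the monotonicity of $T$ in $\theta$. The one noteworthy difference is in part~(2): the paper dispatches this in one line by observing that $\lambda_0\in\partial\Sigma_t$ together with $\Sigma_t$ open gives $\lambda_0\notin\Sigma_t$, and then Definition~\ref{sigma.def} (which already uses the extended $T$ on all of $\mathbb{C}\setminus\{0\}$) immediately yields $T(\lambda_0)\geq t$---no case split on $\theta_t(r_0)$ or limiting argument is needed. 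Your more elaborate treatment via the infimum definition and the $\theta\to 0^+$ limit is correct but redundant.
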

	Figure \ref{cusp.fig} shows an example in which $T(\lambda_0)>t$ at a boundary point of $\Sigma_t$. In the example, $T(\lambda_0)=0$ for all $\lambda_0\in(1,2]$ but $T(1)>0$, because the momenta $\tilde p_0$ and $\tilde p_2$ from (\ref{pktildedef}) are infinite on $(0, 2]$ but finite at 1.
	\begin{remark}\label{tboundary.rem}
	If the condition (\ref{p0infinity}) in Remark \ref{allsupport.rem} holds, then $T\equiv 0$ on $\supp(\mu)\setminus\{0\}$, so that $\supp(\mu)\setminus\{0\}$ is contained in the open set $\Sigma_t$ for every $t>0$. In that case, we can conclude that $T(\lambda_0)=t$ for every nonzero $\lambda_0$ in the boundary of $\Sigma_t$.
	\end{remark}
	\begin{proof}
	Fix $\lambda_0=r_0 e^{i\theta_0}$, with $-\pi<\theta\leq\pi$, belonging to $\partial\Sigma_t\setminus\{0\}$ Then $\lambda_0$ cannot be in the open set $\Sigma_t$. Since $T(\bar\lambda_0)=T(\lambda_0)$, we can assume $\lambda_0$ is in the closed upper half plane. Then $\theta_t(r_0)$ cannot be $\pi^+$ or $\lambda_0$ would be in $\Sigma_t$. If $\theta_0=0$, then $\theta_t(r_0)$ must be zero, or $\lambda_0$ would be in $\Sigma_t$. If $0<\theta_0\leq\pi$, then $\theta_0$ cannot be less then $\theta_t(r_0)$ or $\lambda_0$ would be in $\Sigma_t$. But also $\theta_0$ cannot be greater than $\theta_t(r_0)$ or $\theta_t(r_0)$ would be less than $\pi$---and then by the continuity of $\theta_t$, there would be a neighborhood of $\lambda_0$ outside $\Sigma_t$. In that case, $\lambda_0$ could not be in $\partial\Sigma_t$. Thus, in all cases, $\theta_0=\theta_t(r_0)$, establishing the first part of the proposition. 
	
	For the second part, $T(\lambda_0)$ cannot be less than $t$ or $\lambda_0$ would be in $\Sigma_t$. For the third part, the assumptions ensure that $\lambda_0$ is not in $\supp(\mu)$, in which case,  $T$ is continuous at $\lambda_0$. Then since $\lambda_0$ is a limit of points with $T<t$, we have $T(\lambda_0)\leq t$ (and also $T(\lambda_0)\geq t$).		
	
		Lastly, let $\lambda_0$ be such that $T(\lambda_0) =t$ and $\arg(\lambda_0) \neq 0$. Then $\lambda_0$ is, by definition, not in $\Sigma_t$. But $\lambda_0$ must be in the closure of $\Sigma_t$, by the monotonicity of $T$ with respect to $\theta$. Thus, $\lambda_0$ must be in the boundary of $\Sigma_t$.	
	\end{proof}

	\subsection{The ``outside the domain'' calculation}
	In this section, we will always take $\lambda_0$ outside $\supp(\mu)$, in which case, the initial momenta in (\ref{prho})--(\ref{pepsilon}) will have finite limits as $\varepsilon_0$ tends to zero. We use the notation $$\mu_t=\text{ Brown measure of }xb_t.$$
	
	Our aim is to calculate
		\[s_t(\lambda) = \lim_{\varepsilon \rightarrow 0^+} S(t,\lambda,\varepsilon),\]
	using the Hamilton--Jacobi formula in Theorem $\ref{HJ}$. Our approach is to choose ``good'' initial conditions $\lambda_0$ and $\varepsilon_0$ in the Hamiltonian system (\ref{Ham1})--(\ref{Ham2}) (with initial momenta given by (\ref{prho})--(\ref{pepsilon})) so that $\lambda(t)=\lambda$ and $\varepsilon(t)=0$. 
	
	Now, in light of (\ref{epsilonoft}), we see that if $\varepsilon_0$ is very small, then $\varepsilon(s)$ will be positive but small for $0\leq s\leq t$, provided that the solution of the system (\ref{Ham1})--(\ref{Ham2}) exists up to time $t$. Furthermore, the argument $\theta=\arg(\lambda)$ is a constant of motion. Finally, by (\ref{loglog}), if $\varepsilon_0$ is small (so that $\varepsilon(s)$ is small for $s<t$), then $\vert\lambda(s)\vert$ will be close to $\vert\lambda_0\vert$ for $s<t$.
	
	We therefore propose to take $\lambda_0=\lambda$ and $\varepsilon_0\rightarrow 0$, with the result that
	\[\lambda(t) \equiv \lambda \quad \text{and} \quad \varepsilon(t) \equiv 0.\]
	Now we recall the Hamilton--Jacobi formula in (\ref{hjformula}), which reads (after using (\ref{h0})):
	\begin{equation}\label{HJrepeat}S(t,\lambda(t),\varepsilon(t)) = S(0,\lambda_0,\varepsilon_0) + \varepsilon_0p_0p_2t +\log|\lambda(t)| - \log|\lambda_0|.\end{equation}
	Suppose we can simply let $\varepsilon_0 \rightarrow 0$. Then the second term on the right-hand side of (\ref{HJrepeat}) will be zero for $\lambda_0\notin\supp(\mu)$, because $p_0p_2$ remains finite. Furthermore, since $\lambda(t)=\lambda_0$, the last two terms in the Hamilton--Jacobi formula cancel, leaving us with 
	\[
	S(t,\lambda,0)=S(0,\lambda,0)=\int_0^{\infty}\log(|\xi-\lambda|^2)\,d\mu(\xi).
	\]
	
	If the preceding formula holds on any open set $U$ outside the support of $\mu$, then $S(0,\lambda,0)$ will be harmonic there, meaning that the Brown measure $\mu_t$ is zero on $U$.
		We note, however, that in order for the preceding strategy to work, the lifetime of the solution to (\ref{Ham1})--(\ref{Ham2}) must remain greater than $t$ in the limit $\varepsilon_0\rightarrow 0$. We may then hope to carry out the strategy for $\lambda_0$ outside the closed domain $\overline\Sigma_t$, as the following theorem confirms.
	\begin{theorem}\label{main1}
		Fix a pair $(t,\lambda)$ with $\lambda$ outside $\overline{\Sigma}_t$. Then 
		$$ s_t(\lambda) := \lim_{\varepsilon_0 \rightarrow 0^+} S(t,\lambda,\varepsilon)= \int_0^{\infty}\log(|\xi-\lambda|^2)\,d\mu(\xi).$$
		In particular, the Brown measure $\mu_t$ of $xb_t$ is zero outside of $\overline{\Sigma}_t$ except possibly at the origin. Furthermore, if  $0\notin \overline{\Sigma}_t$, then
		\[\mu_t(\{0\}) = \mu(\{0\}).\]
	\end{theorem}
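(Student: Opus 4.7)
The plan is to apply the Hamilton--Jacobi formula of Theorem \ref{HJ} with initial data $\lambda_0=\lambda$ and $\varepsilon_0\to 0^+$, following the strategy outlined at the start of this subsection. I first verify the two hypotheses: (i) the initial momenta remain finite so that Lemma \ref{smalleps.lem} applies, and (ii) the lifetime of the characteristic system exceeds $t$ in the limit. For (i), Corollary \ref{suppoutside} places $\supp(\mu)\setminus\{0\}$ inside $\overline{\Sigma}_t$, so any nonzero $\lambda\notin\overline{\Sigma}_t$ is outside $\supp(\mu)$; at such a point the initial momenta (\ref{prho})--(\ref{pepsilon}) and the integrals $\tilde p_0,\tilde p_2$ from (\ref{pktildedef}) are all finite. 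For (ii), Proposition \ref{zerooutside} gives $\lim_{\varepsilon_0\to 0^+}t_\ast(\lambda,\varepsilon_0)=T(\lambda)$, and combining Definition \ref{sigma.def} with the last statement of Proposition \ref{boundary} shows $T(\lambda)>t$ strictly (the case $\arg(\lambda)=0$ being handled by continuity of $T$ on the complement of $\supp(\mu)$). Hence $t_\ast>t$ for all sufficiently small $\varepsilon_0>0$.

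With Theorem \ref{HJ} applicable, substituting (\ref{h0}) gives
\[
S(t,\lambda(t),\varepsilon(t))=S(0,\lambda,\varepsilon_0)+\varepsilon_0 p_0 p_2\, t+\log|\lambda(t)|-\log|\lambda|.
\]
Sending $\varepsilon_0\to 0^+$: by Lemma \ref{smalleps.lem} the $\log$ terms cancel; the $\varepsilon_0 p_0 p_2 t$ term vanishes because $p_0 p_2$ stays bounded; and $S(0,\lambda,\varepsilon_0)\to\int_0^\infty\log|\xi-\lambda|^2\,d\mu(\xi)$ by dominated convergence. To handle the left-hand side I would use the inverse function theorem, as flagged in the paragraph preceding Lemma \ref{pepsilon formula}: for each small $\varepsilon>0$ one reparametrises the flow $(\lambda_0,\varepsilon_0)\mapsto(\lambda(t),\varepsilon(t))$ so that the endpoint is $(\lambda,\varepsilon)$ exactly; Hamilton--Jacobi then reads $S(t,\lambda,\varepsilon)=S(0,\lambda_0,\varepsilon_0)+\varepsilon_0 p_0 p_2 t+\log|\lambda|-\log|\lambda_0|$, and letting $\varepsilon\to 0^+$ (with $\lambda_0\to\lambda$ and $\varepsilon_0\to 0$) produces the claimed identity for $s_t(\lambda)$.

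The Brown measure statements follow directly. On the open set $U=\mathbb C\setminus\overline{\Sigma}_t$ the function $s_t$ is the logarithmic potential of $\mu$; by Corollary \ref{suppoutside} the only possible mass of $\mu$ inside $U$ is a point mass at $0$, so $s_t$ is harmonic on $U\setminus\{0\}$ and $\mu_t=\frac{1}{4\pi}\Delta s_t$ vanishes there. If additionally $0\notin\overline{\Sigma}_t$, split the integral in a neighborhood of the origin as
\[
s_t(\lambda)=\mu(\{0\})\log|\lambda|^2+\int_{(0,\infty)}\log|\xi-\lambda|^2\,d\mu(\xi);
\]
the second term is harmonic at $0$ since $\supp(\mu)\setminus\{0\}\subset\overline{\Sigma}_t$ is bounded away from $0$, and $\frac{1}{4\pi}\Delta\log|\lambda|^2=\delta_0$ distributionally gives $\mu_t(\{0\})=\mu(\{0\})$.

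The main obstacle is the identification $\lim_{\varepsilon_0\to 0^+}S(t,\lambda(t),\varepsilon(t))=s_t(\lambda)$: because $S$ is evaluated at the moving point $(\lambda(t),\varepsilon(t))$ rather than at the fixed $(\lambda,\varepsilon)$ used to define $s_t$, one cannot simply quote continuity of $S$, and the inverse-function-theorem reparametrisation above is essential; verifying the nondegeneracy of the flow near $\varepsilon_0=0$ is where the real analytic work lies.
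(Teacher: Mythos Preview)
Your proposal is correct and follows essentially the same approach as the paper: apply the Hamilton--Jacobi formula with $\lambda_0=\lambda$ and $\varepsilon_0\to 0^+$, use the inverse function theorem to justify the limit on the left-hand side, and then read off the Brown measure from the resulting log potential. Your identification of the nondegeneracy check as ``where the real analytic work lies'' corresponds exactly to the paper's Lemma \ref{jacobian}, which computes the Jacobian of the flow map $U_t(\lambda_0,\varepsilon_0)=(\lambda(t),\varepsilon(t))$ at $\varepsilon_0=0$ explicitly and shows $\partial\varepsilon/\partial\varepsilon_0>0$ there.

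The one place your argument has a small gap is the treatment of $\lambda$ on the positive real axis. Your claim that $T(\lambda)>t$ for such $\lambda$ follows ``by continuity of $T$'' is not quite justified: continuity only yields $T(\lambda)\geq t$, and the last part of Proposition \ref{boundary} that you invoke requires $\arg(\lambda_0)\neq 0$. The paper sidesteps this by first proving the pointwise formula only for $\lambda\notin\overline{\Sigma}_t\cup[0,\infty)$, then arguing that since $s_t$ is locally integrable, the identity extends in the distributional sense across the Lebesgue-null set $[0,\infty)$; this already suffices to conclude the Brown measure vanishes on $\overline{\Sigma}_t^c\setminus\{0\}$, after which the pointwise formula on the real axis follows a posteriori. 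This distributional patch is a cleaner way to close the gap than trying to push the strict inequality $T(\lambda)>t$ onto the axis directly.
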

	The theorem tells us that the log potential of the Brown measure $\mu_t$ agrees outside $\Sigma_t$ with the log potential of the law $\mu$ of the self-adjoint element $x$. (This does not, however, mean that $\mu_t=\mu$.)
	
	We now begin working toward the proof of Theorem \ref{main1}. If $\lambda_0$ is outside $\Sigma_t$, then by definition, $T(\lambda_0)$ must be at least $t$. But if $T(\lambda_0)=t$ then by the last part of Proposition \ref{boundary}, $\lambda_0$ must be in the boundary of $\Sigma_t$, provided that $\lambda_0$ is not on the positive real axis. That is, $T(\lambda_0)>t$ for all $\lambda_0 \notin \compconj{\Sigma}_t \cup [0, \infty)$. Thus, for such $\lambda_0$, the lifetime $t_\ast(\lambda_0,\varepsilon_0)$ will be greater than $t$ for all sufficiently small $\varepsilon_0$. For $\lambda_0 \notin \compconj{\Sigma}_t \cup [0, \infty)$ and $\varepsilon_0 >0$,
	we define for each $t>0$ a map $U_t$ by 
	\begin{equation}\label{utdef} U_t(\lambda_0,\varepsilon_0) = (\lambda(t;\lambda_0,\varepsilon_0),\varepsilon(t;\lambda_0,\varepsilon_0)).\end{equation}
	where $\lambda(t;\lambda_0,\varepsilon_0)$ and $\varepsilon(t;\lambda_0,\varepsilon_0)$ denote the $\lambda$- and $\varepsilon$-components of the solution of (\ref{Ham1})--(\ref{Ham2}) with the given initial conditions. We note by Theorem \ref{t*} that this map makes sense even if $\varepsilon_0$ is slightly negative.
	
	We will evaluate the derivative of this map at $(\lambda_0,\varepsilon_0) = (\lambda_0,0)$.
	\begin{lemma}\label{jacobian} For all $\lambda_0\notin \compconj{\Sigma}_t \cup [0, \infty)$,
		the Jacobian of $U_t$ at $(\lambda_0,0)$ has the form
		$$ U_t^{'}(\lambda_0,0) = \begin{pmatrix}
			I_{2\times2} & \frac{\partial \lambda}{\partial \varepsilon_0}(t;\lambda_0,0)  \\
			0 & \frac{\partial \varepsilon}{\partial \varepsilon_0}(t;\lambda_0,0)
		\end{pmatrix}$$
		with $\frac{\partial \varepsilon}{\partial \varepsilon_0}(t;\lambda_0,0) > 0.$
	\end{lemma}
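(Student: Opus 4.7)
I would treat the two columns of $U_t'(\lambda_0,0)$ separately, using Lemma~\ref{smalleps.lem} and Theorem~\ref{t*} plus formula (\ref{epsilonoft}) as the main tools. For the left column, the key observation is that when $\varepsilon_0=0$, Hamilton's equations give $d\varepsilon/dt=\partial H/\partial p_\varepsilon=\varepsilon\bigl(p_\rho-1-2(r^2-\varepsilon)p_\varepsilon\bigr)=0$, and then $d\rho/dt=\varepsilon p_\varepsilon=0$ (while $\theta$ is always conserved). Hence the identities $\varepsilon(s;\lambda_0,0)\equiv 0$ and $\lambda(s;\lambda_0,0)\equiv\lambda_0$ hold throughout $[0,t]$ for every $\lambda_0$ in an open neighborhood of the given point (inside $\mathbb{C}\setminus(\overline{\Sigma}_t\cup[0,\infty))$, where by Corollary~\ref{suppoutside} the initial momenta (\ref{prho})--(\ref{pepsilon}) are smooth in $(\lambda_0,\varepsilon_0)$ for $\varepsilon_0$ near $0$, possibly slightly negative). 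Differentiating these identities in $\lambda_0$ yields $\partial\lambda(t)/\partial\lambda_0=I_{2\times 2}$ and $\partial\varepsilon(t)/\partial\lambda_0=0$, which is the left column of $U_t'(\lambda_0,0)$.

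For the right column, only $\partial\varepsilon/\partial\varepsilon_0|_{\varepsilon_0=0}$ requires work. Since $\varepsilon(t;\lambda_0,0)=0$, formula (\ref{epsilonoft}) gives
\[
\frac{\partial\varepsilon}{\partial\varepsilon_0}\bigg|_{\varepsilon_0=0}=\lim_{\varepsilon_0\to 0}\frac{\varepsilon(t;\lambda_0,\varepsilon_0)}{\varepsilon_0}=\lim_{\varepsilon_0\to 0}\frac{p_0^2\,e^{-Ct}}{p_\varepsilon(t)^2}.
\]
Because $\lambda_0\notin\supp(\mu)$, both $p_0\to\tilde p_0>0$ and $C\to\tilde C=\tilde p_0 r_0^2-\tilde p_2$ are finite, so the task reduces to showing that $p_\varepsilon(t)$ has a finite, strictly positive limit $q(t)$ as $\varepsilon_0\to 0$.

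To identify $q$, I would invoke continuous dependence on parameters together with the evolution equation $dp_\varepsilon/dt=p_\varepsilon(1+(r^2-2\varepsilon)p_\varepsilon-p_\rho)-\varepsilon p_\varepsilon^2$, whose $\varepsilon_0\to 0$ limit (using $p_\rho\to\tilde p_{\rho,0}=1+\tilde C$) is the Riccati equation
\[
\frac{dq}{dt}=q\bigl(r_0^2 q-\tilde C\bigr),\qquad q(0)=\tilde p_0.
\]
Setting $u=1/q$ linearizes this to $du/dt=\tilde Cu-r_0^2$ with $u(0)=1/\tilde p_0$, which has an explicit solution and never blows up. The solution $q$ therefore blows up exactly when $u$ first vanishes, and a direct computation in the two cases $\tilde p_0 r_0^2\ne\tilde p_2$ and $\tilde p_0 r_0^2=\tilde p_2$ identifies this blow-up time as $T(\lambda_0)$ in Definition~\ref{t.def}. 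Since $\lambda_0\notin\overline{\Sigma}_t\cup[0,\infty)$ forces $T(\lambda_0)>t$ (by Definition~\ref{sigma.def} and the strict inequality portion of Proposition~\ref{boundary}), we get $u(t)>0$ and hence $q(t)\in(0,\infty)$, so
\[
\frac{\partial\varepsilon}{\partial\varepsilon_0}\bigg|_{\varepsilon_0=0}=\frac{\tilde p_0^{\,2}\,e^{-\tilde Ct}}{q(t)^2}>0.
\]

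\textbf{Main obstacle.} The delicate point is justifying the convergence $p_\varepsilon(t;\lambda_0,\varepsilon_0)\to q(t)$ and the interchange of limit and derivative at the boundary value $\varepsilon_0=0$. This needs (i) the extension of the flow to a neighborhood of $\varepsilon_0=0$ including slightly negative values, with $p_\varepsilon$ remaining strictly positive on $[0,t]$, and (ii) uniform control of $p_\varepsilon$ up to time $t$. Both follow from the proof of Theorem~\ref{t*} (together with the remark imported from \cite[Remark~5.10]{dhk} about $p_\varepsilon$ staying positive for slightly negative $\varepsilon_0$) combined with Proposition~\ref{zerooutside}, which gives $t_\ast(\lambda_0,\varepsilon_0)\to T(\lambda_0)>t$; standard smooth-dependence theorems then legitimize both the differentiation in the left column and the limit in the right column.
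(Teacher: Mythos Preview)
Your proof is correct and follows essentially the same route as the paper: both use $U_t(\lambda_0,0)=(\lambda_0,0)$ for the left block and formula~(\ref{epsilonoft}) for the bottom-right entry. The paper's version is just more compressed---it differentiates $\varepsilon(t)=\varepsilon_0\cdot\bigl[p_0^2e^{-Ct}/p_\varepsilon(t)^2\bigr]$ by the product rule and reads off positivity directly, relying on the smooth dependence and the positivity of $p_\varepsilon(t)$ already established in Theorem~\ref{t*}, whereas you re-derive the finiteness and positivity of $p_\varepsilon(t)\big|_{\varepsilon_0=0}$ via the limiting Riccati equation (which in effect reconstructs the formula for $T(\lambda_0)$).
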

	\begin{proof} (Similar to Lemma 6.3 in \cite{dhk})
		Note that if $\varepsilon_0 = 0$, then $\varepsilon(t) \equiv 0$ and $\lambda(t) \equiv \lambda_0$. Thus, $U_t(\lambda_0,0) = (\lambda_0,0)$. Then we only need to show that $\frac{\partial \varepsilon}{\partial \varepsilon_0}(t;\lambda_0,0) > 0$. From Lemma \ref{pepsilon formula}, we have
		\begin{align*}
			\frac{\partial \varepsilon}{\partial \varepsilon_0}(t;\lambda_0,0) &= \frac{1}{p_\varepsilon(t)^2} p_0^2e^{-Ct} + \varepsilon_0 \left.\frac{\partial}{\partial \varepsilon_0}\left[\frac{1}{p_\varepsilon(t)^2} p_0^2e^{-Ct}\right]\right\vert_{\varepsilon_0 = 0}\\
			&=\frac{1}{p_\varepsilon(t)^2} p_0^2e^{-Ct} >0,
		\end{align*}
		as claimed.
	\end{proof}
	
	Now we are ready to prove our main result.
	\begin{proof}[Proof of Theorem \ref{main1}] 
		We first establish the result for $\lambda \notin \compconj{\Sigma}_t \cup [0, \infty)$. We take $\lambda_0=\lambda$ in Lemma \ref{jacobian}. Then by the inverse function theorem, the map $U_t$ in (\ref{utdef}) has a local inverse near $U_t(\lambda,0)=(\lambda,0)$. 
		
		Now, the inverse of the matrix in Lemma \ref{jacobian} will have a positive entry in the bottom right corner; that is, $U_t^{-1}$ has the property that $\partial\varepsilon_0/\partial\varepsilon >0$. Thus, the $\varepsilon_0$-component of $U_t^{-1}(\lambda,\varepsilon)$ will be positive for $\varepsilon$ small and positive. In that case, the solution of the Hamiltonian system (\ref{Ham1})--(\ref{Ham2}) will have $\varepsilon(u)>0$ up to the blow-up time. In turn, the blow-up time with initial conditions $(\lambda_0,\varepsilon_0)=U_t^{-1}(\lambda,\varepsilon)$ will exceed $t$ for $\varepsilon$ close to 0. 
		
	We now let ``HJ'' denote the quantity on the right-hand side of the Hamilton--Jacobi formula (\ref{HJrepeat}):
	\begin{equation}\label{hjfunction} \mathrm{HJ}(t,\lambda_0,\varepsilon_0) = S(0,\lambda_0,\varepsilon_0) + \varepsilon_0p_0p_2t +\log|\lambda(t)| - \log|\lambda_0| .\end{equation}
	If $\varepsilon$ is small and positive, the Hamilton--Jacobi formula (\ref{HJrepeat}) tells us that
	\begin{equation}\label{hjinverse} S(t,\lambda,\varepsilon) = \mathrm{HJ}(t,U_t^{-1}(\lambda,\varepsilon)).\end{equation}
	Now, the Hamilton--Jacobi formula is not directly applicable when $\varepsilon=0$, because $S(t,\lambda,\varepsilon)$ is defined only for $\varepsilon>0$.  But the map $U_t(\lambda_0,\varepsilon_0)$, defined in terms of the Hamiltonian system (\ref{Ham1})--(\ref{Ham2}), does makes sense when $\varepsilon<0$. Thus, the right-hand side of (\ref{hjinverse}) does make sense when $\varepsilon$ is zero or even slightly negative. Thus, (\ref{hjinverse}) provides a way of computing the \emph{limit} of $S(t,\lambda,\varepsilon)$ as $\varepsilon$ approaches zero. 
		
		In the limit $\varepsilon_0 \rightarrow 0^+$ with $\lambda$ fixed, the inverse function theorem tells us that $U_t^{-1}(\lambda,\varepsilon) \rightarrow (\lambda,0)$. Thus the limit of $S(t,\lambda,\varepsilon)$ as $\varepsilon$ tends to zero from above can be computed by taking $\lambda_0=\lambda$ and $\varepsilon_0=0$ on the right-hand side of (\ref{hjfunction}). Since $\lambda(t)$ becomes zero in this limit, as discussed at the beginning of this section, we get 
		\[\lim_{\varepsilon\rightarrow 0^+}S(t,\lambda,\varepsilon)=\lim_{\varepsilon_0\rightarrow 0^+,\,\lambda_0\rightarrow\lambda}S(0,\lambda_0,\varepsilon_0)=
		\int_0^\infty\log(|\xi-\lambda|^2)\,d\mu(\xi),
		\]
		where the second term on the right-hand side of (\ref{hjfunction}) tends to zero as $\varepsilon_0$ tends to zero, because $p_0$ and $p_2$ remain finite when $\lambda_0$ is outside $\supp(\mu)$. The limit of $S(t,\lambda,\varepsilon)$ as $\varepsilon$ tends to zero from above
		can be computed by putting $\lambda(t;\lambda_0,\varepsilon_0)= \lambda$ and letting $\varepsilon_0 \rightarrow 0$ and $\lambda_0 \rightarrow \lambda$ on the right-hand side of (\ref{hjinverse}). We therefore obtain the desired result when $\lambda$ is outside $\overline\Sigma_t\cup[0, \infty)$.
		
		We now have that
	\begin{equation}\label{s_t formula}
		s_t(\lambda) = \int_0^{\infty}\log(|\xi-\lambda|^2)\,d\mu(\xi)
	\end{equation}
		holds for any $\lambda \in \mathbb{C}$ outside $\overline{\Sigma}_t \cup [0, \infty)$. Since $s_t(\lambda)$ is subharmonic, it is it is locally integrable everywhere. (See Proposition \ref{limS.prop}.) Thus, $s_t$ can be interpreted as distribution by integrating against test functions with respect to Lebesgue measure on the plane. It follows that when computing $s_t$ in the distribution sense, we can ignore sets of Lebesgue measure zero, such as $[0, \infty)$. 
		
		We conclude, then, that the formula (\ref{s_t formula}) continues to hold in the distribution sense on the complement of $\overline{\Sigma}_t$. Now, the right-hand side of (\ref{s_t formula}) is a smooth function outside $\overline\Sigma_t\cup\{0\}$, by Lemma \ref{suppoutside}. Its Laplacian in the distribution sense is then the Laplacian in the classical sense, which can be computed by putting the Laplacian inside the integral, giving an answer of 0. Thus, $\mu_t$ is zero outside $\overline\Sigma_t\cup\{0\}$. Once this result is known, it is easy to see that (\ref{s_t formula}) actually holds in the pointwise sense outside $\overline\Sigma_t\cup\{0\}$.
		
		Finally, we compute the mass of $\mu_t$ at the origin, in the case that $0$ is outside $\overline\Sigma_t$. In that case, we can write
		\begin{align}\label{st split}
			s_t(\lambda) = \log(|\lambda|^2)\mu(\{0\}) + \int_{\supp(\mu)\setminus\{0\}} \log(|\xi-\lambda|^2)\,d\mu(\xi).
		\end{align}
		where $\supp(\mu)\setminus\{0\}$ is contained in $\overline\Sigma_t$ by Lemma \ref{suppoutside}. Thus, the second term on the right-hand side of (\ref{st split}) is harmonic outside $\overline\Sigma_t$. Thus, if $0 \notin \compconj{\Sigma}_t$, we obtain
		\[\frac{1}{4\pi}\Delta s_t = 
		\mu(\{0\})\delta_0 \quad \text{on $\compconj{\Sigma}_t^c$},\]
		as claimed.
	\end{proof} 
	
	\section{The general $\tau$ case}
	
	\subsection{Statement of results}
	
	Let $b_{s,\tau}$ be a three-parameter free multiplicative Brownian motion, as defined in Definition \ref{def b_st}, and let $x$ be a non-negative operator that is freely independent of $b_{s,\tau}$. Our goal is to compute the support of the Brown measure of $xb_{s,\tau}$. 
	
	Note that for a \emph{unitary} operator $u$ freely independent of $b_{s,\tau}$, the complement of the support of the Brown measure of $ub_{s,\tau}$ is obtained by mapping the complement of the support of $ub_{s,s}$ by a map $f_{s-\tau}$. (See Section 3 in \cite{hho}). This transformation $f_\alpha$ was introduced by Biane \cite[Section 4]{bianejfa} in the case where $u =1$. In our case, we will do something similar to get the complement of the support of the Brown measure of $xb_{s,\tau}$.
	
	Recall that $\mu$ is the law of the non-negative element $x$, as in (\ref{lawdef}).
	
	\begin{definition}\label{f.def}
		For $\alpha \in \mathbb{C}$, define a transformation $f_{\alpha}$ by 
		\begin{align*}
			f_{\alpha}(\lambda) = \lambda\exp\left[\frac{\alpha}{2}\int_0^{\infty} \frac{\xi+\lambda}{\xi-\lambda}\,d\mu(\xi)\right],
		\end{align*}
		where $\mu$ is the law of $x$ and $\lambda \notin \supp(\mu).$
	\end{definition}
	Note that as $\lambda$ tends to infinity, the exponential factor in $f_\alpha(\lambda)$ tends to $e^{-\alpha/2}$. Thus, $f_\alpha(\lambda)$ tends to infinity as $\lambda$ tends to infinity. 
	\begin{lemma}\label{fat0}
	Suppose $0$ is an isolated point in the support of $\mu$. Then $f_\alpha(\lambda)$ approaches $0$ as $\lambda$ approaches $0$. In this case, we can extend $f_\alpha$ to a holomorphic function on $(\mathbb C\setminus\supp(\mu))\cup\{0\}$ with $f_\alpha(0)=0$.
	\end{lemma}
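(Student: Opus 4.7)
The plan is to decompose $\mu$ into its atom at the origin and a part supported away from the origin, and then show that the integrand in Definition \ref{f.def} splits into a trivial term plus a holomorphic function of $\lambda$ in a neighborhood of $0$.

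First, since $0$ is an isolated point of $\supp(\mu)$ by hypothesis, we may write $\mu = c\,\delta_0 + \nu$, where $c=\mu(\{0\})\in[0,1]$ and $\nu$ is a measure supported in $[\eta,\infty)$ for some $\eta>0$. Substituting into the defining integral for $f_\alpha$,
\begin{equation*}
\int_0^{\infty} \frac{\xi+\lambda}{\xi-\lambda}\,d\mu(\xi) = -c + \int_\eta^\infty \frac{\xi+\lambda}{\xi-\lambda}\,d\nu(\xi).
\end{equation*}
The remaining integral against $\nu$ defines a holomorphic function of $\lambda$ on the disk $\{|\lambda|<\eta\}$ (indeed on $\mathbb C\setminus\supp(\nu)$), since the integrand is holomorphic in $\lambda$ uniformly in $\xi\in\supp(\nu)$ and $\nu$ is a finite measure.

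Next, I would evaluate this integrand at $\lambda=0$, obtaining $\nu([\eta,\infty))=1-c$. Therefore the exponential factor in $f_\alpha(\lambda)$ extends to a function $g(\lambda)$ holomorphic on $\{|\lambda|<\eta\}$, with $g(0)=\exp\bigl[\tfrac{\alpha}{2}(1-2c)\bigr]\neq 0$. Then $f_\alpha(\lambda)=\lambda\,g(\lambda)$ is holomorphic on $\{|\lambda|<\eta\}$ and vanishes at the origin, which gives both conclusions of the lemma: $f_\alpha(\lambda)\to 0$ as $\lambda\to 0$, and setting $f_\alpha(0)=0$ produces a holomorphic extension to $(\mathbb C\setminus\supp(\mu))\cup\{0\}$.

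There is no real obstacle here; the only thing to verify carefully is the separation between the atom at $0$ and the rest of $\supp(\mu)$, which is exactly the hypothesis. (In passing, the computation shows that $0$ is not just a removable singularity of $f_\alpha$ but a simple zero, with derivative $f_\alpha'(0)=\exp\bigl[\tfrac{\alpha}{2}(1-2c)\bigr]$, a fact that may be useful later when one wants to invoke the inverse function theorem at $0$.)
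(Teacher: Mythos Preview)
Your proof is correct and follows essentially the same approach as the paper: split off the atom at the origin, observe that the remaining integral is holomorphic in a neighborhood of $0$, and conclude that $f_\alpha(\lambda)=\lambda\cdot g(\lambda)$ with $g$ holomorphic and nonvanishing at $0$. You have simply supplied more detail than the paper (which says ``the result follows easily'' after the decomposition), including the explicit value $g(0)=\exp\bigl[\tfrac{\alpha}{2}(1-2c)\bigr]$ and the observation that the zero is simple---the latter is indeed used later in the paper (see the proof of Proposition~\ref{masszero.prop}).
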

	\begin{proof}
	Assume 0 is in $\supp(\mu)$ but $(0, c)$ is outside $\supp(\mu)$ for some $c>0$. Then for $\lambda\notin\supp(\mu)$, we have
	\[
	\int_0^\infty \frac{\xi+\lambda}{\xi-\lambda}\,d\mu(\xi)=-\mu(\{0\})+\int_c^\infty \frac{\xi+\lambda}{\xi-\lambda}\,d\mu(\xi)
	\]
	and the result follows easily.
	\end{proof}
	Recall from Corollary \ref{suppoutside} that $\supp(\mu)\setminus\{0\}$ is contained in $\overline\Sigma_s$. Thus, by Lemma \ref{fat0}, we can always define $f_\alpha$ as a holomorphic function on the complement of $\overline\Sigma_s$, even in the case where $0$ is outside $\overline\Sigma_s$ and $\mu$ has mass at 0. 
	\begin{definition}\label{domainD}
		For $s>0$ and $\tau \in \mathbb{C}$ such that $|s-\tau| \leq s$, define a closed set $D_{s,\tau}$ by the relation
		\[ (D_{s,\tau})^c = f_{s-\tau}((\overline{\Sigma}_s)^c)\]
		and
		\[\Sigma_{s,\tau} = \operatorname{int}(D_{s,\tau}), \]
		where we recall from Section 3 that \[\Sigma_s = \{\lambda : \lambda\neq 0 \text{ and } T(\lambda)<s\}.\]
	\end{definition}

	See Figure \ref{fig:second} for examples of the domains $D_{s,\tau}$.
	
	 We now define the regularized log potential of the Brown measure of $xb_{s,\tau}$, as follows.
	
	\begin{definition}\label{defS}
		Define a function $S$ by
		$$ S(s,\tau,\lambda,\varepsilon) = \tr\left[\log\left((xb_{s,\tau}-\lambda)^\ast(xb_{s,\tau}-\lambda)+\varepsilon^2\right)\right] $$
		for all $s>0, \tau \in \mathbb{C}$ with $|\tau -s | \leq s, \lambda\in \mathbb{C},$ and $\varepsilon >0$.
	\end{definition}
	Note that while in Section \ref{sequalstau.sec} we regularized the log potential of the Brown measure of $xb_t$ using $\varepsilon$, as in \cite{dhk} and \cite{hz}, here we regularize using $\varepsilon^2$ to be consistent with \cite{hho}. This convention will allow us to use formulas from \cite{hho} without change. 
	
	We now state the main result of this section, whose proof will occupy the rest of the section.
	\begin{theorem}\label{main2}
		Fix $s >0$ and $\tau \in \mathbb{C}$ such that  $|\tau-s|\leq s$. Then the following results hold.
		\begin{enumerate}
			\item The map $f_{s-\tau}$ is a bijection from $(\overline{\Sigma}_s)^c$ to $(D_{s,\tau})^c$ and $f_{s-\tau}(\lambda)$ tends to infinity as $\lambda$ tends to infinity.
			\item For all $\lambda$ outside of $D_{s,\tau}$, we have  \begin{equation}\label{dSz1}
				\frac{\partial S}{\partial \lambda} =   \frac{f^{-1}_{s-\tau}(\lambda)}{\lambda}\int_0^{\infty} \frac{1}{f^{-1}_{s-\tau}(\lambda) -\xi} \,d\mu(\xi).
			\end{equation}
			\item The Brown measure $\mu_{s,\tau}$ of $xb_{s,\tau}$ is zero outside $D_{s,\tau}$ except possibly at the origin.
			\item If 0 is not in $D_{s,\tau}$, we have
			 \[\mu_{s,\tau}(\{0\}) = \mu(\{0\}).\]
		\end{enumerate}
	\end{theorem}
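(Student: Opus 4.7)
The plan is to derive and solve a PDE for $S(s,\tau,\lambda,\varepsilon)$ with respect to $\tau$, holding $s$ fixed, using the $\tau=s$ case (Theorem \ref{main1}) as initial condition. Following Hall and Ho \cite{hho}, applying free It\^o calculus to the SDE \eqref{sde} should yield a first-order nonlinear PDE for $S$ in the variables $(\lambda,\bar\lambda,\varepsilon)$ (this derivation can be imported from \cite{hho} essentially verbatim, since the computation at the level of the SDE does not depend on the nature of the initial condition). As in Section \ref{sequalstau.sec}, I convert this PDE into a Hamiltonian characteristic system, and run the Hamilton--Jacobi method with $\varepsilon_0=0$.

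The key computation is the characteristic flow when $\varepsilon_0=0$ and $\lambda_0 \in (\overline{\Sigma}_s)^c$. Because the initial momentum in $\varepsilon$ remains finite off $\supp(\mu)\supset \supp(\mu)\setminus\{0\}\subset\overline\Sigma_s$, I expect $\varepsilon(\tau)\equiv 0$ along the flow, as in Lemma \ref{smalleps.lem}. By contrast, $\lambda(\tau)$ evolves nontrivially; matching the characteristic ODE against the defining formula of Definition \ref{f.def} and integrating should give $\lambda(\tau)=f_{s-\tau}(\lambda_0)$. Since $f_{s-\tau}$ thus arises as the time-$(s-\tau)$ map of a well-posed ODE on $(\overline{\Sigma}_s)^c$ and fixes $0$ when applicable (via Lemma \ref{fat0}), it is automatically injective on $(\overline{\Sigma}_s)^c$; surjectivity onto $(D_{s,\tau})^c$ is built into Definition \ref{domainD}, and the limit $f_{s-\tau}(\lambda)\to\infty$ as $\lambda\to\infty$ was noted just after Definition \ref{f.def}. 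This proves Part (1).

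For Part (2), I track the $\lambda$-momentum $p_\lambda(\tau)$ along characteristics and combine the Hamilton--Jacobi formula with the chain rule $\partial/\partial\lambda = (f'_{s-\tau}(\lambda_0))^{-1}\partial/\partial\lambda_0$. Using the known formula $\partial S/\partial\lambda|_{\tau=s,\,\lambda=\lambda_0} = \int_0^\infty (\lambda_0-\xi)^{-1}d\mu(\xi)$ from Theorem \ref{main1}, and simplifying with the logarithmic derivative of $f_{s-\tau}$, I expect the factor $\lambda_0/\lambda$ in \eqref{dSz1} to emerge; then substituting $\lambda_0 = f^{-1}_{s-\tau}(\lambda)$ gives \eqref{dSz1}. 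Part (3) follows immediately: since $f^{-1}_{s-\tau}$ is holomorphic on $(D_{s,\tau})^c$ and the integrand in \eqref{dSz1} is holomorphic in $\lambda_0$ off $\supp(\mu)$, the right-hand side of \eqref{dSz1} is holomorphic in $\lambda$ on $(D_{s,\tau})^c\setminus\{0\}$, so $\partial_{\bar\lambda}\partial_\lambda S=0$ there and $\mu_{s,\tau}$ vanishes outside $D_{s,\tau}$ except possibly at the origin. For Part (4), when $0\notin D_{s,\tau}$ the map $f^{-1}_{s-\tau}$ extends holomorphically across $0$ with $f^{-1}_{s-\tau}(0)=0$ by Lemma \ref{fat0}; decomposing $\mu=\mu(\{0\})\delta_0+\mu|_{(0,\infty)}$ and mirroring the final computation in the proof of Theorem \ref{main1} should extract a $\mu(\{0\})\log|\lambda|^2$ contribution with the remainder harmonic near $0$.

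The main obstacle I anticipate is the lifetime and invertibility analysis for the $\tau$-characteristic system analogous to Theorem \ref{t*} and Lemma \ref{jacobian}: I must verify that the flow does not break down along the full path from $\tau=s$ to the given $\tau$ satisfying $|\tau-s|\le s$, and that the local diffeomorphism property needed to apply the inverse function theorem at $\varepsilon=0$ persists. A secondary technical point is tracking integration constants in $p_\lambda(\tau)$ precisely enough that the final formula matches \eqref{dSz1} with the correct factor $\lambda_0/\lambda$ rather than simply $\int(\lambda_0-\xi)^{-1}d\mu$; this requires careful bookkeeping of both the Hamilton--Jacobi correction term and the logarithmic derivative of $f_{s-\tau}$.
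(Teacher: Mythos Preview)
Your overall strategy---PDE in $\tau$ from \cite{hho}, Hamilton--Jacobi characteristics with $\varepsilon_0=0$, identification of $\lambda(\tau)=f_{s-\tau}(\lambda_0)$---matches the paper. But your injectivity argument for Part~(1) has a real gap. You claim that $f_{s-\tau}$ is ``the time-$(s-\tau)$ map of a well-posed ODE on $(\overline{\Sigma}_s)^c$'' and hence automatically injective. This is not correct: the characteristic equations are a Hamiltonian system in phase space, not an autonomous ODE on the base. Concretely, with $\varepsilon_0=0$ the explicit solution is $\lambda(\tau)=\lambda_0\exp\bigl[\tfrac{\tau-s}{2}(2\lambda_0 p_{\lambda,0}-1)\bigr]$, so $d\lambda/d\tau=\tfrac12(2\lambda_0 p_{\lambda,0}-1)\,\lambda(\tau)$, with a coefficient depending on the \emph{initial} point $\lambda_0$ rather than on the current point $\lambda(\tau)$. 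Uniqueness for ODEs therefore does not preclude two different $\lambda_0,\tilde\lambda_0$ from landing on the same $\lambda$.

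The paper's route is the reverse of yours: it first establishes Parts~(2) and~(3) (Propositions~\ref{main} and~\ref{brownzero.prop}), handling the points where $f'_{s-\tau}(\lambda_0)=0$ separately via the open mapping theorem since the inverse-function-theorem argument in Proposition~\ref{6.2} fails there. Only then does it prove injectivity (Proposition~\ref{finject.prop}), using that $\partial S/\partial\lambda$ is now known to be a single-valued holomorphic function on $(D_{s,\tau})^c$: if $f_{s-\tau}(\lambda_0)=f_{s-\tau}(\tilde\lambda_0)$, the second Hamilton--Jacobi formula together with $\lambda(\tau)p_\lambda(\tau)=\lambda_0 p_{\lambda,0}$ forces $\lambda_0 p_{\lambda,0}=\tilde\lambda_0\tilde p_{\lambda,0}$, and then the explicit form of $f_{s-\tau}$ gives $\lambda_0=\tilde\lambda_0$. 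Your Part~(2) derivation via the chain rule and logarithmic derivative should work, but the paper's observation that $\lambda p_\lambda$ is conserved along characteristics gives the factor $\lambda_0/\lambda$ in \eqref{dSz1} immediately without that bookkeeping.
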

Note that $f_{s-\tau}$ is surjective by Definition \ref{domainD}. The right-hand side of $(\ref{dSz1})$ is well-defined since the nonzero closed support of $\mu$ lies inside $\overline{\Sigma}_s$ by Corollary \ref{suppoutside}. This equation also implies that $\frac{\partial S}{\partial \lambda}$ is holomorphic function outside $D_{s,\tau}$ except possibly at the origin. Therefore, $(3)$ follows from $(2)$.
	\subsection{The PDE method}\label{pdeMethod}
	To obtain Theorem \ref{main2}, we need the following PDE and its Hamiltonian, obtained as Theorem 4.2 in \cite{hho}.
	\begin{theorem}[Hall--Ho]
		The function $S$ in Definition \ref{defS} satisfies the PDE 
		\begin{equation}\label{pdeS}
			\frac{\partial S}{\partial \tau} = \frac{1}{8}\left[1-\left(1-\varepsilon	\frac{\partial S}{\partial \varepsilon}-2\lambda	\frac{\partial S}{\partial \lambda}\right)^2\right]
		\end{equation}
		for all $\lambda\in\mathbb{C}, \tau \in \mathbb{C}$ satisfying $|\tau -s| < s,$ and $\varepsilon>0$ where
		 \[\frac{\partial}{\partial \tau} = \frac{1}{2}\left(\frac{\partial}{\partial \tau_1} - i\frac{\partial}{\partial \tau_2}\right) \quad \text{and } \quad \frac{\partial}{\partial \lambda}=\frac{1}{2}\left(\frac{\partial}{\partial \lambda_1} - i\frac{\partial}{\partial \lambda_2}\right)\] are the Cauchy--Riemann operators and the initial condition at $\tau =s$ is given by
		$$ S(s,s,\lambda,\varepsilon) = \tr\left[\log\left((xb_{s,s}-\lambda)^\ast(xb_{s,s}-\lambda)+\varepsilon^2\right)\right], $$
		where $b_{s,s}=b_s$ is the free multiplicative Brownian motion considered in Section 3. 
	\end{theorem}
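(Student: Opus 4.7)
The plan is to apply Theorem 4.2 of \cite{hho} and verify that its derivation carries over to our setting. In \cite{hho} the PDE is established when the initial condition is a unitary element $u$ freely independent of $b_{s,\tau}$, but examining that proof one sees that unitarity is never used in any essential way: the only ingredients are free independence of the initial condition from $b_{s,\tau}$ and the defining free It\^o SDE for $b_{s,\tau}$ from Definition \ref{def b_st}. Provided we work at $\varepsilon>0$, so that $M:=(xb_{s,\tau}-\lambda)^\ast(xb_{s,\tau}-\lambda)+\varepsilon^2$ is invertible in the relevant von Neumann algebra, the same derivation applies with $u$ replaced by our non-negative $x$.

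Concretely, I would fix $s>0$ and regard $b_{s,\tau}$ as a $\tau$-dependent family whose $\ast$-distribution varies holomorphically in $\tau$ on the open disk $|\tau-s|<s$; this is precisely what justifies acting on $S$ by the Cauchy--Riemann operator $\partial/\partial\tau$. Differentiating $S=\tr[\log M]$ yields
\[
\frac{\partial S}{\partial\tau} = \tr\!\left[M^{-1}\,\frac{\partial M}{\partial\tau}\right],
\]
where $\partial M/\partial\tau$ has a direct first-order contribution from the defining SDE \emph{and} a free It\^o correction arising from the quadratic variation of $b_{s,\tau}$ when it appears on both sides of the hermitization $(xb_{s,\tau}-\lambda)^\ast(xb_{s,\tau}-\lambda)$.

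The second step is to express the right-hand side purely in terms of derivatives of $S$. From
\[
\frac{\partial S}{\partial\varepsilon} = 2\varepsilon\,\tr[M^{-1}], \qquad \frac{\partial S}{\partial\lambda} = -\tr\!\left[M^{-1}(xb_{s,\tau}-\lambda)^\ast\right],
\]
the combination $1-\varepsilon\,\partial_\varepsilon S - 2\lambda\,\partial_\lambda S$ can be recognized as a specific trace involving $b_{s,\tau}$ on both sides of $M$. This is precisely the combination whose square appears in the free It\^o correction, which is what produces the quadratic right-hand side of \eqref{pdeS}; the overall factor $1/8$ reflects the covariance structure of the rotated elliptic element $Z$ driving the SDE and the conventions $\partial/\partial\tau = \tfrac{1}{2}(\partial_{\tau_1}-i\partial_{\tau_2})$.

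The main obstacle is the bookkeeping of the free It\^o corrections when $\tau$ is varied with $s$ held fixed, which is not a standard time-flow deformation and requires one to treat $\partial/\partial\tau$ as a genuine holomorphic operator. One must verify that the deformation of $b_{s,\tau}$ in $\tau$ is governed by a free SDE whose quadratic-variation term assembles into exactly the combination $(1-\varepsilon\partial_\varepsilon S - 2\lambda\partial_\lambda S)^2$, and that passing to Cauchy--Riemann derivatives commutes with the trace in the required sense. These are the delicate points, and both are worked out in \cite{hho}; since no step of that argument uses unitarity, the PDE transfers verbatim to the setting of non-negative $x$.
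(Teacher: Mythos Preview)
Your proposal is correct and takes essentially the same approach as the paper: the paper does not give a proof of this theorem but simply cites it as Theorem 4.2 of \cite{hho}, and your argument amounts to observing that the derivation there never uses unitarity of the initial condition, so it applies verbatim when $u$ is replaced by a non-negative $x$. Your additional exposition of how the free It\^o calculation assembles into the combination $1-\varepsilon\,\partial_\varepsilon S-2\lambda\,\partial_\lambda S$ is a helpful gloss but not required.
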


	Moreover, from Section \ref{sequalstau.sec}, in the initial case $\tau =s$ we have that for $\lambda_0 \notin \compconj{\Sigma}_s$,
	\begin{equation}\label{S0}
	s(s,s,\lambda_0) := 	\lim_{\varepsilon \rightarrow 0^+} S(s,s,\lambda_0,0) = \int_0^{\infty} \log\left(|\xi-\lambda_0|^2\right)\,d\mu(\xi), 
	\end{equation}
	where $\mu$ is the law of the element $x$.
	
	We now introduce the complex-valued Hamiltonian function, obtained from the PDE (\ref{pdeS}) by replacing each derivative with a momentum variable, with an overall minus sign:
	$$ H(\lambda,\varepsilon,p_\lambda,p_\varepsilon) = -\frac{1}{8}\left[1-\left(1-\varepsilon p_\varepsilon -2\lambda p_\lambda\right)^2\right].$$
	Here the variables $\lambda$ and $p_\lambda$ are complex-valued and $\varepsilon$ and $p_\varepsilon$ are real-valued. For arbitrary $\lambda_0\in\mathbb C$ and $\varepsilon_0>0$, define initial momenta $p_{\lambda,0}$ and $p_{\varepsilon,0}$ by
	\begin{align}
		p_{\lambda,0} = \frac{\partial}{\partial \lambda}S(s,s,\lambda_0,\varepsilon_0)\label{4.4}\\
		p_{\varepsilon,0} = \frac{\partial}{\partial \varepsilon}S(s,s,\lambda_0,\varepsilon_0)\label{4.5}.
	\end{align}
		
	We now introduce curves $\lambda(\tau)$, $\varepsilon(\tau)$, $p_\lambda(\tau)$, and $p_\varepsilon(\tau)$ as in Section 5 in \cite{hho} but with $\tau$ there replaced by $\tau-s$ here, since our initial condition is at $\tau=s$:
	\begin{align}
		\lambda(\tau) &= \lambda_0\exp\left\{\frac{\tau-s}{2}\left(\varepsilon_0p_{\varepsilon,0} + 2\lambda_0p_{\lambda,0} - 1\right)\right\}\label{4.6}\\
		\varepsilon(\tau) &= \varepsilon_0\exp\left\{\Re\left[\frac{\tau-s}{2}\left(\varepsilon_0p_{\varepsilon,0} + 2\lambda_0p_{\lambda,0} - 1\right)\right]\right\}\label{4.7}\\
		p_\lambda(\tau) &= p_{\lambda,0}\exp\left\{-\frac{\tau-s}{2}\left(\varepsilon_0p_{\varepsilon,0} + 2\lambda_0p_{\lambda,0} - 1\right)\right\}\label{4.8}\\
		p_{\varepsilon}(\tau) &=p_{\varepsilon,0}\exp\left\{-\Re\left[\frac{\tau-s}{2}\left(\varepsilon_0p_{\varepsilon,0} + 2\lambda_0p_{\lambda,0} - 1\right)\right] \right\}\label{4.9}.
	\end{align}
	The initial values in all cases denote the values of the curves at $\tau=s$; for example, $\lambda_0$ is the value of $\lambda(\tau)$ at $\tau=s$. 
	
	Hall and Ho develop \cite[Section 4]{hho} Hamilton--Jacobi formulas for solutions to the PDE (\ref{pdeS}) by reducing the equations to an ordinary Hamilton--Jacobi PDE with a real time variable. These formulas hold initially for $\vert\tau-s\vert<s$ but extend by continuity to the boundary case $|s-\tau| = s$, as in \cite[Proposition 5.5]{hho}. We therefore obtain the following result.
	\begin{theorem}[Hall--Ho]\label{thm4.6}
		For all $\tau$ with $|\tau-s| \leq s,$ we have the first Hamilton--Jacobi formula
		\begin{equation} \label{HJ2}
			\begin{aligned}
			S(s,\tau,\lambda(\tau),\varepsilon(\tau)) &= S(s,s,\lambda_0,\varepsilon_0) + 2 \Re[(\tau-s) H_0] \\&+\frac{1}{2}\Re\left[(\tau-s)(\varepsilon_0p_{\varepsilon,0}+2\lambda_0p_{\lambda,0})\right],
			\end{aligned}			
		\end{equation}
		where $H_0 = H(\lambda_0,\varepsilon_0,p_{\lambda,0},p_{\varepsilon,0}),$ and the second Hamilton--Jacobi formulas
		\begin{align}
			\frac{\partial S}{\partial \lambda}(s,\tau,\lambda(\tau),\varepsilon(\tau)) &= p_\lambda(\tau)\label{spz}\\
			\frac{\partial S}{\partial \varepsilon}(s,\tau,\lambda(\tau),\varepsilon(\tau)) &= p_\varepsilon(\tau).
		\end{align}
	\end{theorem}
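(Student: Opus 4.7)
The plan is to carry out the method of characteristics for the complex PDE (\ref{pdeS}), adapting the approach of Section~4 of \cite{hho} to our translated initial data at $\tau=s$. The point is that the Hamiltonian $H$ does not depend explicitly on $\tau$, so the whole construction is translation-invariant in $\tau$ and carries over unchanged once the origin of time is moved from $0$ to $s$.

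My first step is to reduce the complex-$\tau$ PDE to a family of real-time Hamilton-Jacobi PDEs. Along any ray $\tau(r)=s+r e^{i\phi}$ with $r\geq 0$, the operator $\partial/\partial\tau$ acts on $r$-dependent restrictions as $\tfrac{1}{2}e^{-i\phi}\,d/dr$. Taking real parts of (\ref{pdeS}) along the ray yields a genuine first-order real Hamilton-Jacobi PDE in $r$ with Hamiltonian $\tilde H_\phi := 2\Re(e^{i\phi}H)$. I would then verify that the explicit curves (\ref{4.6})--(\ref{4.9}) solve Hamilton's equations for $\tilde H_\phi$ with initial data (\ref{4.4})--(\ref{4.5}), for every direction $\phi$ simultaneously. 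Direct inspection of (\ref{4.6})--(\ref{4.9}) shows that $\lambda(\tau)p_\lambda(\tau)\equiv \lambda_0 p_{\lambda,0}$ and $\varepsilon(\tau)p_\varepsilon(\tau)\equiv \varepsilon_0 p_{\varepsilon,0}$ are each separately conserved, so the combination $A:=1-\varepsilon p_\varepsilon-2\lambda p_\lambda$ is constant along trajectories, and hence $H=-\tfrac{1}{8}(1-A^2)$ is conserved with value $H_0$.

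Given the characteristics, the first Hamilton-Jacobi formula (\ref{HJ2}) follows from the classical identity
\[
\frac{d}{dr}S\bigl(s,\tau(r),\lambda(\tau(r)),\varepsilon(\tau(r))\bigr)=2\Re\bigl(e^{i\phi}p_\lambda\dot\lambda\bigr)+p_\varepsilon\dot\varepsilon-\tilde H_\phi
\]
along a characteristic. Because $H$ and the products $\lambda p_\lambda,\varepsilon p_\varepsilon$ are conserved, the integrand is constant in $r$; integrating from $0$ to $|\tau-s|$ yields a contribution $2\Re[(\tau-s)H_0]$ from the Hamiltonian and $\tfrac{1}{2}\Re[(\tau-s)(\varepsilon_0 p_{\varepsilon,0}+2\lambda_0 p_{\lambda,0})]$ from the momentum-velocity pairing, matching (\ref{HJ2}). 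The second Hamilton-Jacobi formulas (\ref{spz}) and its $\varepsilon$-counterpart then follow by differentiating (\ref{HJ2}) in the initial data $\lambda_0,\varepsilon_0$ and appealing to the inverse function theorem to view $(\lambda_0,\varepsilon_0)\mapsto(\lambda(\tau),\varepsilon(\tau))$ as a local diffeomorphism away from degenerate loci.

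This construction lives initially on the open region $|\tau-s|<s$. To reach the boundary circle $|\tau-s|=s$, I would invoke continuity: the characteristic curves (\ref{4.6})--(\ref{4.9}) and the regularized log potential $S$ all depend continuously on $\tau$ up to the boundary, so both sides of (\ref{HJ2}) and of the derivative formulas extend by continuity, exactly as in \cite[Proposition~5.5]{hho}. The main obstacle is the complex-to-real bookkeeping: one must track the factors of $\tfrac{1}{2}$ in the Cauchy-Riemann operator and the real parts throughout, and verify that the four explicit curves, derived along a single ray of direction $\phi$, in fact satisfy the Hamilton-Jacobi equations simultaneously for \emph{every} $\phi$, so that together they parametrize a genuine holomorphic characteristic flow on the whole disk $|\tau-s|\leq s$.
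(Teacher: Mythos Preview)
Your proposal is correct and follows essentially the same route as the paper: the paper does not give an independent argument but simply invokes \cite[Section~4]{hho}, noting that one reduces the complex-$\tau$ PDE to an ordinary real-time Hamilton--Jacobi equation, derives the formulas for $|\tau-s|<s$, and then extends to $|\tau-s|=s$ by continuity as in \cite[Proposition~5.5]{hho}. Your sketch fills in exactly these steps---the ray reduction, the conservation of $\lambda p_\lambda$, $\varepsilon p_\varepsilon$, and $H$ along the explicit curves (\ref{4.6})--(\ref{4.9}), integration of the classical Hamilton--Jacobi identity, and the continuity extension---so there is no substantive difference in approach.
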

	
	\subsection{The $\varepsilon\rightarrow 0$ limit}
	
	We are then interested in the behavior of the curves in (\ref{4.6})--(\ref{4.9}) in the limit as $\varepsilon$ tends to zero.
	\begin{lemma}\label{zfz0} 
		
		Suppose that $\lambda_0$ is a nonzero point outside $\overline\Sigma_s$. Then for all $\tau$ with $|\tau-s| \leq s,$ the limits as  $\varepsilon_0\rightarrow 0$ of $p_{\varepsilon,0}$ and $\lambda_0p_{\lambda,0}$ exist and	\[\lim_{\varepsilon_0 \rightarrow 0}\lambda(\tau) = f_{s-\tau}(\lambda_0).\]
		The same result holds for $\lambda_0=0$, provided that $0\notin\overline\Sigma_s$ and $0\notin\supp(\mu)$.
	\end{lemma}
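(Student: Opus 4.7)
The plan is to use the second Hamilton--Jacobi formulas of Theorem \ref{HJ} (applied at $t=s$) to express the initial momenta $p_{\lambda,0}$ and $p_{\varepsilon,0}$ in (\ref{4.4})--(\ref{4.5}) in terms of the Section-3 Hamiltonian flow, then take $\varepsilon_0\to 0$ and substitute into (\ref{4.6}). Since Sections 3 and 4 use different regularizations, writing $S_3(s,\lambda,\eta)=\tr[\log(|xb_s-\lambda|^2+\eta)]$ gives $S(s,s,\lambda,\varepsilon)=S_3(s,\lambda,\varepsilon^2)$, and the chain rule together with the Wirtinger identity $2\lambda\,\partial/\partial\lambda=\partial/\partial\rho-i\,\partial/\partial\theta$ (a short polar-coordinate computation) yield
\[
\varepsilon_0 p_{\varepsilon,0}=2\varepsilon_0^2\,\tfrac{\partial S_3}{\partial\varepsilon}(s,\lambda_0,\varepsilon_0^2),\qquad 2\lambda_0 p_{\lambda,0}=\tfrac{\partial S_3}{\partial\rho}(s,\lambda_0,\varepsilon_0^2)-i\,\tfrac{\partial S_3}{\partial\theta}(s,\lambda_0,\varepsilon_0^2).
\]

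For $\lambda_0$ outside $\overline\Sigma_s$, Lemma \ref{jacobian} and the inverse function theorem produce unique initial data $(\tilde\lambda_0,\tilde\varepsilon_0)$ for the Section-3 flow with $U_s(\tilde\lambda_0,\tilde\varepsilon_0)=(\lambda_0,\varepsilon_0^2)$ and $(\tilde\lambda_0,\tilde\varepsilon_0)\to(\lambda_0,0)$ as $\varepsilon_0\to 0$. The second Hamilton--Jacobi formulas of Theorem \ref{HJ} identify the three partial derivatives above with $p_\varepsilon(s)$, $p_\rho(s)$, and $p_\theta(s)=p_{\theta,0}$ along this trajectory. A direct inspection of (\ref{hamil}) shows that when $\varepsilon\equiv 0$ one has $d\rho/dt=\varepsilon p_\varepsilon=0$, $dp_\rho/dt=2\varepsilon r^2 p_\varepsilon^2=0$, and $dp_\theta/dt=0$, so along the limiting degenerate trajectory at $\tilde\varepsilon_0=0$ the quantities $p_\rho$ and $p_\theta$ remain equal to their initial values (\ref{prho})--(\ref{ptheta}) at $\varepsilon_0=0$, while $p_\varepsilon$ solves a Bernoulli ODE whose solution is finite on $[0,s]$ since $T(\lambda_0)>s$. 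Continuous dependence on initial conditions then yields the $\varepsilon_0\to 0$ limits of all three quantities.

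An elementary algebraic simplification using $r_0 e^{i\theta}=\lambda_0$, $r_0^2=\lambda_0\bar\lambda_0$, and $|\xi-\lambda_0|^2=(\xi-\lambda_0)(\xi-\bar\lambda_0)$ reduces the limit of $p_\rho-ip_\theta$ to $2\lambda_0\int_0^\infty \frac{d\mu(\xi)}{\lambda_0-\xi}$, while $\varepsilon_0 p_{\varepsilon,0}=2\varepsilon_0^2 p_\varepsilon(s)\to 0$ because $p_\varepsilon(s)$ stays bounded. Combined with the identity
\[
1-\int_0^\infty\frac{\xi+\lambda_0}{\xi-\lambda_0}\,d\mu(\xi)=2\lambda_0\int_0^\infty\frac{d\mu(\xi)}{\lambda_0-\xi},
\]
this turns the exponent in (\ref{4.6}) into $\tfrac{s-\tau}{2}\int\frac{\xi+\lambda_0}{\xi-\lambda_0}\,d\mu(\xi)$ in the limit, giving $\lim\lambda(\tau)=f_{s-\tau}(\lambda_0)$. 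The case $\lambda_0=0$ (with $0\notin\overline\Sigma_s\cup\supp(\mu)$) follows the same argument, using that $f_{s-\tau}(0)=0$ by Lemma \ref{fat0}. The main obstacle is bookkeeping the two different regularization conventions and using Lemma \ref{jacobian} to reduce evaluation of the $\varepsilon_0>0$ momenta to the degenerate $\tilde\varepsilon_0=0$ trajectory; once this reduction is made, the remaining work is an explicit integral manipulation.
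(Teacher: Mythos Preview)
Your proof is correct, but it takes a considerably more elaborate route than the paper's. The paper simply invokes the conclusion of Theorem~\ref{main1}: for $\lambda_0\notin\overline\Sigma_s$ one already knows (formula~(\ref{S0})) that the analytic extension of $S(s,s,\lambda_0,\varepsilon)$ to $\varepsilon=0$ equals $\int_0^\infty\log|\xi-\lambda_0|^2\,d\mu(\xi)$, and then differentiates this explicit formula directly to obtain $\lim_{\varepsilon_0\to 0}p_{\lambda,0}=\int_0^\infty(\lambda_0-\xi)^{-1}\,d\mu(\xi)$ and the boundedness of $p_{\varepsilon,0}$. You instead reconstruct these limits from scratch by pulling back along the Section-3 Hamiltonian flow via Lemma~\ref{jacobian} and the second Hamilton--Jacobi formulas, analyzing the degenerate $\tilde\varepsilon_0=0$ trajectory, and only then assembling the integral. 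This is essentially re-deriving the analytic-extension step already carried out inside the proof of Theorem~\ref{main1}, so the payoff is the same.

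Two small points. First, Theorem~\ref{HJ} as stated only records the second Hamilton--Jacobi formulas for $\partial S/\partial\varepsilon$ and $\partial S/\partial\rho$; you also use $\partial S/\partial\theta=p_\theta$, which is true by the same general mechanism but is not literally written in the paper, so you should cite the underlying result (Proposition~5.3 of \cite{dhk}) rather than Theorem~\ref{HJ} for that identity. Second, Lemma~\ref{jacobian} is stated only for $\lambda_0\notin\overline\Sigma_t\cup[0,\infty)$, so a word is needed for real positive $\lambda_0$ outside $\overline\Sigma_s$ (the paper handles this by the distributional/continuity argument at the end of Theorem~\ref{main1}'s proof, which yields the analytic extension everywhere outside $\overline\Sigma_s\cup\{0\}$).
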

	\begin{proof}
		Let $\lambda_0 \notin \compconj{\Sigma}_s$ be nonzero. Then $p_{\varepsilon,0}$ is the same as $p_0$ in (\ref{pkdef}) in Section 3. Thus, by Corollary \ref{suppoutside}, $p_{\varepsilon,0}$ remains finite as $\varepsilon_0 \rightarrow 0$. Meanwhile, as $\varepsilon_0 \rightarrow 0$, we have
		\begin{align*}
			p_{\lambda,0} &= \frac{\partial}{\partial \lambda_0} S(s,s,\lambda_0,\varepsilon_0)\\
			&= \frac{\partial}{\partial \lambda_0} \int_0^{\infty} \log\left(|\xi-\lambda_0|^2\right)d\mu(\xi) \\
			&= \int_0^{\infty} \frac{1}{\lambda_0 -\xi} \,d\mu(\xi).
		\end{align*}
		It follows that
		\begin{align*}
			\lim_{\varepsilon_0 \rightarrow 0} 2\lambda_0p_{\lambda,0} -1 &= 2\lambda_0 \int_0^{\infty} \frac{1}{\lambda_0 -\xi} \,d\mu(\xi) -1\\
			&= - \int_0^{\infty} \frac{\xi+\lambda_0}{\xi-\lambda_0} \,d\mu(\xi).
		\end{align*}
		Hence, letting $\varepsilon$ tend to zero in the formula $(\ref{4.6})$ for $\lambda(\tau)$, we get
		\begin{align*}
			\lim_{\varepsilon_0 \rightarrow 0}\lambda(\tau) &= \lim_{\varepsilon_0 \rightarrow 0} \lambda_0\exp\left\{\frac{\tau-s}{2}\left(\varepsilon_0p_{\varepsilon,0} + 2\lambda_0p_{\lambda,0} - 1\right)\right\}\\
			&= \lambda_0\exp\left[\frac{\tau-s}{2}\left(-\int_0^{\infty} \frac{\xi+\lambda_0}{\xi-\lambda_0} \,d\mu(\xi)\right)\right] \\
			&= f_{s-\tau}(\lambda_0).
		\end{align*}
		The same calculation is applicable if $\lambda_0\notin\overline\Sigma_s$ equals 0, provided that 0 is not in the support of $\mu$.
	\end{proof}
	
	\begin{prop}\label{6.2} Define the map $\phi_\tau$ from $\mathbb{C} \times (0, \infty)$ into $\mathbb{C}\times (0, \infty)$ by 
		$$\phi_\tau(\lambda_0,\varepsilon_0) = (\lambda(\tau),\varepsilon(\tau))$$
		where $\lambda(\tau)$ and $\varepsilon(\tau)$ are computed with $\lambda(s) = \lambda_0$ and $\varepsilon(s) = \varepsilon_0$ and the initial momenta $p_{\lambda,0}$ and $p_{\varepsilon,0}$ given by (\ref{4.4}) and (\ref{4.5}). Then for all nonzero $\lambda_0 \in (\overline{\Sigma}_s)^c$, the map $\phi_\tau$ extends analytically to a neighborhood of $\varepsilon_0 =0$ as a map into $\mathbb C\times\mathbb R$. Moreover, Jacobian of $\phi_\tau$ at $(\lambda_0,0$) is invertible if and only if $f'_{s-\tau}(\lambda_0) \neq 0$. 
	\end{prop}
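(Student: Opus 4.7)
The plan is to unpack the explicit formulas (\ref{4.6})--(\ref{4.7}) in terms of the initial momenta at $\tau=s$ and then differentiate directly. From Definition \ref{defS} at $\tau=s$, we have $S(s,s,\lambda_0,\varepsilon_0)=\int_0^\infty \log(|\xi-\lambda_0|^2+\varepsilon_0^2)\,d\mu(\xi)$, which gives
\[
p_{\varepsilon,0}=2\varepsilon_0\, Q(\lambda_0,\varepsilon_0),\qquad p_{\lambda,0}=R(\lambda_0,\varepsilon_0),
\]
where $Q(\lambda_0,\varepsilon_0):=\int_0^\infty (|\xi-\lambda_0|^2+\varepsilon_0^2)^{-1}\,d\mu(\xi)$ and $R(\lambda_0,\varepsilon_0):=\int_0^\infty (\bar\lambda_0-\xi)(|\xi-\lambda_0|^2+\varepsilon_0^2)^{-1}\,d\mu(\xi)$. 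For nonzero $\lambda_0\notin\overline\Sigma_s$, Corollary \ref{suppoutside} gives $\lambda_0\notin\supp\mu$, hence $Q$ and $R$ are real-analytic in $(\lambda_0,\varepsilon_0)$ in a neighborhood of $(\lambda_0,0)$.

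Substituting into (\ref{4.6})--(\ref{4.7}) rewrites $\phi_\tau$ as
\[
\lambda(\tau)=\lambda_0 e^{E},\qquad \varepsilon(\tau)=\varepsilon_0 e^{\Re E},\qquad E:=\tfrac{\tau-s}{2}\bigl(2\varepsilon_0^2 Q+2\lambda_0 R-1\bigr),
\]
both manifestly real-analytic in $(\lambda_0,\varepsilon_0)$ near $\varepsilon_0=0$, which gives the desired extension (I interpret ``analytic'' in the real-analytic sense, consistent with $\phi_\tau$ mapping into $\mathbb C\times\mathbb R$). The central observation for the Jacobian is that $E$ has \emph{no} first-order dependence on $\varepsilon_0$ at $\varepsilon_0=0$: the term $\varepsilon_0^2 Q$ is quadratic, and differentiating $R$ under the integral in $\varepsilon_0$ produces an explicit factor of $\varepsilon_0$, so $\partial R/\partial\varepsilon_0|_{\varepsilon_0=0}=0$. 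Consequently $\partial\lambda/\partial\varepsilon_0|_{\varepsilon_0=0}=\lambda_0 e^{E_0}\,\partial E/\partial\varepsilon_0|_{\varepsilon_0=0}=0$, while $\partial\varepsilon/\partial\lambda_0=\varepsilon_0 e^{\Re E}\,\partial(\Re E)/\partial\lambda_0$ vanishes at $\varepsilon_0=0$ by the overall factor $\varepsilon_0$. The $3\times 3$ real Jacobian is therefore block diagonal.

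The upper-left $2\times 2$ block is the real Jacobian of the restriction $\lambda_0\mapsto \lambda(\tau)|_{\varepsilon_0=0}=f_{s-\tau}(\lambda_0)$, identified via Lemma \ref{zfz0}; being the real form of a holomorphic derivative, it has determinant $|f_{s-\tau}'(\lambda_0)|^2$. The bottom-right scalar entry is $\partial\varepsilon/\partial\varepsilon_0|_{\varepsilon_0=0}=e^{\Re E(\tau,\lambda_0,0)}=|f_{s-\tau}(\lambda_0)/\lambda_0|$, which is strictly positive for nonzero $\lambda_0$. Thus the Jacobian determinant equals $|f_{s-\tau}'(\lambda_0)|^2\cdot|f_{s-\tau}(\lambda_0)|/|\lambda_0|$, which vanishes if and only if $f_{s-\tau}'(\lambda_0)=0$, as claimed. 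I expect the main technical point to be the careful bookkeeping of vanishing orders in $\varepsilon_0$ (particularly the vanishing of $\partial R/\partial\varepsilon_0$ at $\varepsilon_0=0$); everything else reduces to direct differentiation of the explicit exponential formulas, with differentiation under the integral justified by the smoothness of the integrand when $\lambda_0\notin\supp\mu$.
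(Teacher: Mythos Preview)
Your displayed identity
\[
S(s,s,\lambda_0,\varepsilon_0)=\int_0^\infty \log(|\xi-\lambda_0|^2+\varepsilon_0^2)\,d\mu(\xi)
\]
is incorrect. By Definition~\ref{defS} at $\tau=s$, the left-hand side is the regularized log potential of $xb_{s,s}=xb_s$, \emph{not} of $x$; there is no closed integral formula for it when $\varepsilon_0>0$. Equation~(\ref{S0}) only gives the $\varepsilon_0\to0$ limit, and that is all Section~3 produces. Consequently your explicit integrals for $Q$ and $R$ are wrong for positive $\varepsilon_0$, and the justification ``differentiating $R$ under the integral produces an explicit factor of $\varepsilon_0$'' does not apply as written. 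The analyticity of $p_{\lambda,0}$ and $p_{\varepsilon,0}$ near $\varepsilon_0=0$ is not a statement about $\mu$; it comes from the Section~3 analysis (the inverse-function-theorem extension in Lemma~\ref{jacobian} and the proof of Theorem~\ref{main1}), which shows that the Section~3 function $S(s,\lambda,\cdot)$ extends analytically through $0$ for $\lambda\notin\overline\Sigma_s$.

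Your block-diagonal claim for the Jacobian is nevertheless recoverable, but by a different argument: the Section~4 function satisfies $S(s,s,\lambda_0,\varepsilon_0)=S^{(3)}(s,\lambda_0,\varepsilon_0^2)$, hence is \emph{even} in $\varepsilon_0$; this forces $\partial p_{\lambda,0}/\partial\varepsilon_0|_{\varepsilon_0=0}=0$ and $p_{\varepsilon,0}=O(\varepsilon_0)$, giving the vanishing you wanted. The paper avoids this issue entirely by observing only the block \emph{triangular} form: since $\varepsilon(\tau)=\varepsilon_0 e^{\Re E}$, one gets $\partial\varepsilon/\partial x_0=\partial\varepsilon/\partial y_0=0$ at $\varepsilon_0=0$ for free, and the $(3,3)$ entry $e^{\Re E_0}>0$. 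The determinant is then $|f_{s-\tau}'(\lambda_0)|^2\cdot e^{\Re E_0}$ regardless of what sits in the $(1,3)$ and $(2,3)$ entries, so no analysis of $\partial\lambda/\partial\varepsilon_0$ is needed.
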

	\begin{proof}
		By Corollary \ref{suppoutside}, every nonzero  $\lambda_0$ in $(\overline{\Sigma}_s)^c$ is outside of the closed support of $\mu$. Then the formulas $(\ref{4.4})$ and $(\ref{4.5})$ for the initial momenta $p_{\lambda,0}$ and $p_{\varepsilon,0}$ are well defined and analytic even in a neighborhood of $\varepsilon_0 =0$. Thus the formulas $(\ref{4.6})$ and $(\ref{4.7})$ for $\lambda(\tau)$ and $\varepsilon(\tau)$ depend analytically on $\lambda_0$ and $\varepsilon_0$.
		
		Now, to compute the Jacobian of $\phi_\tau$ at $(\lambda_0,0)$, we write $\lambda_0 = x_0 +iy_0.$ Differentiating (\ref{4.6}) and evaluating at $\varepsilon_0 =0$ gives 
		$$ \frac{\partial \varepsilon}{\partial x_0} = 0;\quad \frac{\partial \varepsilon}{\partial y_0} = 0; \quad\frac{\partial \varepsilon}{\partial \varepsilon_0} = \exp\left(\Re\left[\frac{\tau-s}{2}(2\lambda_0p_{\lambda,0}-1)\right]\right)> 0.$$
		Meanwhile, when $\varepsilon_0 =0$, Lemma \ref{zfz0} tells us that $\lambda(\tau) = f_{s-\tau}(\lambda_0)$. Thus, Jacobian of $\phi_\tau$ has the form
		$$\begin{pmatrix}
			J & \ast  \\
			0 &  \frac{\partial \varepsilon}{\partial \varepsilon_0}
		\end{pmatrix}$$
		where $J$ is the Jacobian of $f_{s-\tau}(\lambda_0)$ at $\lambda_0$ (that is, the complex number $f_{s-\tau}'(\lambda_0)$, viewed as a $2\times 2$ matrix). Therefore the Jacobian of $\phi_\tau$ at $(\lambda_0,0)$ is invertible if and only if  $f'_{s-\tau}(\lambda_0) \neq 0$.
	\end{proof}
	\begin{prop}\label{6.3}
		Fix $\tau$ with $|\tau -s| \leq s$ and a nonzero point $w \in (D_{s,\tau})^c$. Choose $w_0 \in (\overline{\Sigma}_s)^c$ such that $f_{s-\tau}(w_0) = w$, where such a $w_0$ exists by Definition \ref{domainD}. If $f'_{s-\tau}(w_0) \neq 0$, then the map
		$$ (\lambda,\varepsilon) \mapsto S(s,\tau,\lambda,\varepsilon),$$
		initially defined for $\varepsilon >0$, has an analytic extension defined for $(\lambda,\varepsilon)$ in a neighborhood of $(w,0)$.
	\end{prop}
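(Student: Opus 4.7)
The plan is to construct the analytic extension explicitly as the composition of the right-hand side of the Hamilton--Jacobi formula \eqref{HJ2} with a local inverse of the map $\phi_\tau$ from Proposition \ref{6.2}. First I set up the inverse function theorem: the hypothesis $f'_{s-\tau}(w_0)\neq 0$ together with Proposition \ref{6.2} says that the real Jacobian of $\phi_\tau$ at $(w_0,0)$ is invertible. Moreover, since $f_{s-\tau}(0)=0$ wherever $f_{s-\tau}$ is defined at $0$ (using Lemma \ref{fat0} and the observation that $0\in(\overline{\Sigma}_s)^c$ forces $0$ to be outside $\supp(\mu)$ or an isolated point of it), the assumption $w\neq 0$ forces $w_0\neq 0$. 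Combined with $w_0\in(\overline{\Sigma}_s)^c$ and Corollary \ref{suppoutside}, this gives $w_0\notin\supp(\mu)$. The real analytic inverse function theorem then produces a real analytic local inverse $\phi_\tau^{-1}$ on a neighborhood $V$ of $(w,0)=\phi_\tau(w_0,0)$.

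On $V$, set $(\lambda_0,\varepsilon_0):=\phi_\tau^{-1}(\lambda,\varepsilon)$ and define
\begin{equation*}
\widetilde{S}(\lambda,\varepsilon) := S(s,s,\lambda_0,\varepsilon_0) + 2\Re[(\tau-s)H_0] + \tfrac{1}{2}\Re\bigl[(\tau-s)(\varepsilon_0 p_{\varepsilon,0} + 2\lambda_0 p_{\lambda,0})\bigr],
\end{equation*}
where $H_0, p_{\lambda,0}, p_{\varepsilon,0}$ are computed from the initial data at $(\lambda_0,\varepsilon_0)$ as in \eqref{4.4}--\eqref{4.5}. Because $w_0\notin\supp(\mu)$ and $\supp(\mu)$ is compact, there is a $\delta>0$ with $|\xi-\lambda_0|\geq\delta$ for every $\xi\in\supp(\mu)$ once $\lambda_0$ lies close enough to $w_0$. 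Hence the integrand $\log(|\xi-\lambda_0|^2+\varepsilon_0^2)$ is real analytic in $(\lambda_0,\varepsilon_0)$ uniformly in $\xi$, and differentiation under the integral sign shows that $S(s,s,\lambda_0,\varepsilon_0)$ together with its derivatives $p_{\lambda,0}, p_{\varepsilon,0}$ (and therefore $H_0$) is real analytic in a neighborhood of $(w_0,0)$. Composition with the real analytic $\phi_\tau^{-1}$ then yields real analyticity of $\widetilde{S}$ on $V$.

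What remains is to identify $\widetilde{S}$ with $S(s,\tau,\cdot,\cdot)$ on $V\cap\{\varepsilon>0\}$, which is the main check. Formula \eqref{4.7} for $\varepsilon(\tau)$ writes it as $\varepsilon_0$ times the exponential of a real-valued expression, so $\varepsilon(\tau)$ has the same sign as $\varepsilon_0$; consequently $\phi_\tau^{-1}$ preserves the sign of the $\varepsilon$-coordinate and $\phi_\tau^{-1}(V\cap\{\varepsilon>0\})\subset\{\varepsilon_0>0\}$, which is precisely where Theorem \ref{thm4.6} is applicable. The first Hamilton--Jacobi formula \eqref{HJ2} then reads $S(s,\tau,\lambda,\varepsilon)=\widetilde{S}(\lambda,\varepsilon)$ on this set, and therefore $\widetilde{S}$ is a real analytic extension of $S(s,\tau,\cdot,\cdot)$ across $\varepsilon=0$ on $V$. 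The hardest technical step has already been absorbed into Proposition \ref{6.2} (establishing that $\phi_\tau$ itself extends analytically across $\varepsilon_0=0$ and has a tractable Jacobian there); once that is available, the remaining work is the inverse function theorem and a uniform regularity estimate for the initial data.
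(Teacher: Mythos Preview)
Your approach---composing the right-hand side of the first Hamilton--Jacobi formula \eqref{HJ2} with a local inverse of $\phi_\tau$ supplied by Proposition~\ref{6.2}---is exactly the paper's approach, and your additional care in checking $w_0\neq 0$ and that $\varepsilon>0$ forces $\varepsilon_0>0$ (via the explicit formula \eqref{4.7}) is correct and more thorough than the paper's own write-up.

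There is, however, a genuine slip in your analyticity argument for the initial data. You justify real analyticity of $S(s,s,\lambda_0,\varepsilon_0)$ and of $p_{\lambda,0},p_{\varepsilon,0}$ by writing $S(s,s,\lambda_0,\varepsilon_0)$ as the integral $\int_0^\infty\log(|\xi-\lambda_0|^2+\varepsilon_0^2)\,d\mu(\xi)$ and bounding $|\xi-\lambda_0|$ from below. But that integral is the regularized log potential of $x$ itself (the $t=0$ object), whereas by Definition~\ref{defS},
\[
S(s,s,\lambda_0,\varepsilon_0)=\tr\bigl[\log\bigl((xb_s-\lambda_0)^*(xb_s-\lambda_0)+\varepsilon_0^2\bigr)\bigr]
\]
involves $xb_s$, not $x$. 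The two functions agree only in the limit $\varepsilon_0\to 0$ outside $\overline{\Sigma}_s$ (this is precisely Theorem~\ref{main1}), not for positive $\varepsilon_0$, so your uniform-in-$\xi$ estimate does not apply. The correct way to obtain analyticity of $S(s,s,\cdot,\cdot)$ and its derivatives across $\varepsilon_0=0$ is to invoke the Section~\ref{sequalstau.sec} analysis: the proof of Theorem~\ref{main1} exhibits $S(s,s,\lambda,\varepsilon)$ as $\mathrm{HJ}(s,U_s^{-1}(\lambda,\varepsilon))$ (formula \eqref{hjinverse}), where both the Hamilton--Jacobi expression and the local inverse $U_s^{-1}$ are shown there to extend analytically to $\varepsilon$ in a neighborhood of $0$ for $\lambda$ outside $\overline{\Sigma}_s$. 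Once you replace your integral argument with this reference, the rest of your proof goes through unchanged. (The paper's own proofs of Propositions~\ref{6.2} and \ref{6.3} are terse on this point and lean implicitly on the same Section~\ref{sequalstau.sec} input.)
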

	\begin{proof}
		We let HJ$(s,\tau,\lambda_0,\varepsilon_0)$ denote the right-hand side of the first Hamilton--Jacobi formula (\ref{HJ2}), i.e.,
		\begin{multline*} 
		\text{HJ}(s,\tau,\lambda_0,\varepsilon_0) \\
		=S(s,s,\lambda_0,\varepsilon_0) + 2 \Re[(\tau-s) H_0] +\frac{1}{2}\Re\left[(\tau-s)(\varepsilon_0p_{\varepsilon,0}+2\lambda_0p_{\lambda,0})\right].
		\end{multline*}
		Fix a nonzero $w \in (D_{s,\tau})^c$ and pick $w_0 \in (\overline{\Sigma}_s)^c$ such that $f_{s-\tau}(w_0) =w$. By Lemma \ref{zfz0} with $\varepsilon_0 =0$, we have $\lambda(\tau)= f_{s-\tau}(w_0) =w$. By Proposition \ref{6.2}, the map $\phi_\tau$ has a local inverse near $(w_0,0)$, provided that $f_{s-\tau}'(w_0)\neq 0$. 
		
		We may therefore define a function $\tilde S$ by
		$$ \tilde{S}(s,\tau,\lambda,\varepsilon) = \text{HJ}(s,\tau,\phi_\tau^{-1}(\lambda,\varepsilon))$$
		Then $\tilde{S}$ agrees with $S$ as long as the first Hamilton--Jacobi formula (\ref{HJ2}) is applicable, namely for $\varepsilon_0 >0$. Thus, $\tilde{S}$ will be the desired analytic extension, provided that $f'_{s-\tau}(w_0) \neq 0$.
	\end{proof}
	We now want to let $\varepsilon_0$ tend to zero in the second Hamilton--Jacobi formula (\ref{spz}). We do this at first for points $\lambda$ outside $D_{s,\tau}$ for which we can find $\lambda_0$ outside $\overline\Sigma_s$ with $f_{s-\tau}(\lambda_0)=\lambda$ and $f_{s-\tau}'(\lambda_0)\neq 0$.
	\begin{prop}\label{main}
		Fix $\tau$ with $|\tau -s| \leq s$ and a nonzero point $\lambda \in (D_{s,\tau})^c$. Choose $\lambda_0 \in (\overline{\Sigma}_s)^c$ such that $f_{s-\tau}(\lambda_0) = \lambda$, and assume that $f'_{s-\tau}(\lambda_0) \neq 0$. Then the analytic extension of $S$ given by Proposition \ref{6.3} satisfies
		\begin{equation}\label{dSz}
			\frac{\partial S}{\partial \lambda}(s,\tau,\lambda,0) =   \frac{f^{-1}_{s-\tau}(\lambda)}{\lambda}\int_0^{\infty} \frac{1}{f^{-1}_{s-\tau}(\lambda) -\xi} \,d\mu(\xi)
		\end{equation}
		where the inverse is taken locally. 
	\end{prop}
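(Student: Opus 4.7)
The plan is to derive the formula by passing to the $\varepsilon_0 \to 0$ limit in the second Hamilton--Jacobi formula (\ref{spz}). For $\varepsilon_0>0$, Theorem \ref{thm4.6} gives
\[
\frac{\partial S}{\partial \lambda}(s,\tau,\lambda(\tau),\varepsilon(\tau)) = p_\lambda(\tau),
\]
with $(\lambda(\tau),\varepsilon(\tau)) = \phi_\tau(\lambda_0,\varepsilon_0)$. By Proposition \ref{6.2}, the hypothesis $f_{s-\tau}'(\lambda_0)\neq 0$ ensures that $\phi_\tau$ has an analytic local inverse near $(\lambda_0,0)$, and by Lemma \ref{zfz0} we have $\phi_\tau(\lambda_0,\varepsilon_0)\to (\lambda,0)$ as $\varepsilon_0\to 0^+$. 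Since the analytic extension $\tilde S$ constructed in Proposition \ref{6.3} agrees with $S$ for $\varepsilon_0>0$ and is analytic in a neighborhood of $(\lambda,0)$, its $\lambda$-derivative is continuous there, so the left-hand side converges to $\frac{\partial S}{\partial\lambda}(s,\tau,\lambda,0)$.

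The key algebraic observation is that the exponents in (\ref{4.6}) and (\ref{4.8}) are negatives of each other, so
\[
\lambda(\tau)\, p_\lambda(\tau) \,=\, \lambda_0 \, p_{\lambda,0}
\]
identically in $\varepsilon_0$. Thus $p_\lambda(\tau) = \lambda_0 p_{\lambda,0}/\lambda(\tau)$ whenever $\lambda(\tau)\neq 0$, which holds in a neighborhood of $\varepsilon_0=0$ since $\lambda(\tau)\to\lambda\neq 0$. Next, from the computation in the proof of Lemma \ref{zfz0}, we have
\[
\lim_{\varepsilon_0 \to 0} p_{\lambda,0} \,=\, \int_0^{\infty} \frac{1}{\lambda_0-\xi}\,d\mu(\xi),
\]
using that $\lambda_0\in(\overline\Sigma_s)^c$ lies outside $\supp(\mu)$ by Corollary \ref{suppoutside}.

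Combining these ingredients, we take $\varepsilon_0\to 0^+$ on both sides of the second Hamilton--Jacobi formula. The left-hand side converges to $\frac{\partial S}{\partial\lambda}(s,\tau,\lambda,0)$ by the continuity argument above, and the right-hand side converges to
\[
\frac{\lambda_0}{\lambda} \int_0^{\infty} \frac{1}{\lambda_0-\xi}\,d\mu(\xi).
\]
Substituting $\lambda_0 = f^{-1}_{s-\tau}(\lambda)$ (interpreted via the local inverse guaranteed by $f_{s-\tau}'(\lambda_0)\neq 0$) yields exactly the claimed formula (\ref{dSz}).

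The only delicate point is justifying that the limit of $\frac{\partial S}{\partial \lambda}$ along the characteristic endpoints equals $\frac{\partial S}{\partial \lambda}$ of the extended function at $(\lambda,0)$, and this is why Proposition \ref{6.3} is invoked: the analytic extension $\tilde S = \mathrm{HJ}\circ \phi_\tau^{-1}$ makes the derivative jointly continuous in $(\lambda,\varepsilon)$ across $\varepsilon=0$, so the limit is automatic. The remaining computations are the explicit cancellation of the exponentials in (\ref{4.6})--(\ref{4.8}) and the monotone convergence of $p_{\lambda,0}$ already recorded in Lemma \ref{zfz0}.
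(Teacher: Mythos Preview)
Your proof is correct and follows essentially the same approach as the paper: both use the cancellation $\lambda(\tau)p_\lambda(\tau)=\lambda_0 p_{\lambda,0}$ coming from (\ref{4.6}) and (\ref{4.8}), invoke Proposition~\ref{6.3} to justify passing to $\varepsilon_0\to 0$ in the second Hamilton--Jacobi formula, and then substitute the limiting value of $p_{\lambda,0}$ from Lemma~\ref{zfz0}. One small quibble: in your final summary paragraph you refer to ``monotone convergence of $p_{\lambda,0}$,'' but $p_{\lambda,0}$ is complex-valued and the actual mechanism (as in Lemma~\ref{zfz0}) is simply continuity of the integrand, since $\lambda_0\notin\supp(\mu)$.
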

	\begin{proof}
		Since $f'_{s-\tau}(\lambda_0)\neq 0$, it directly follows from Proposition $\ref{6.3}$ that we can let $\varepsilon_0 \rightarrow 0$ in the second Hamilton--Jacobi formula $(\ref{spz})$. Now, if we compare the formulas (\ref{4.6}) and (\ref{4.8}), we see that 
		\[
		p_\lambda(\tau)=p_{\lambda,0}\frac{\lambda_0}{\lambda(\tau)}.
		\]
		Furthermore, by Lemma \ref{zfz0}, we have (with $\varepsilon=0$) that $\lambda(\tau)= f_{s-\tau}(\lambda_0)=\lambda$. Thus, the analytic extension of $S$ satisfies 
		\begin{align*}
			\frac{\partial S}{\partial \lambda}(s,\tau,\lambda,0) &= \frac{\lambda_0}{\lambda}p_{\lambda,0}\\
			&= \frac{\lambda_0}{f_{s-\tau}(\lambda_0)}\int_0^{\infty} \frac{1}{\lambda_0 -\xi} \,d\mu(\xi) \\
			&= \frac{f^{-1}_{s-\tau}(\lambda)}{\lambda}\int_0^{\infty} \frac{1}{f^{-1}_{s-\tau}(\lambda) -\xi} \,d\mu(\xi),
		\end{align*}
		where the inverse is locally defined and holomorphic by the holomorphic version of the inverse function theorem. 
	\end{proof}
	
	\begin{prop}\label{brownzero.prop}
		 The Brown measure of $xb_{s,\tau}$ is zero outside $D_{s,\tau}$, except possibly at the origin. 
	\end{prop}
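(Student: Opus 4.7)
The strategy is to use Proposition \ref{main} to show that the log potential $s_{s,\tau}(\lambda) := \lim_{\varepsilon \to 0^+} S(s,\tau,\lambda,\varepsilon)$ is harmonic on $(D_{s,\tau})^c \setminus \{0\}$. Since the Brown measure equals $\frac{1}{4\pi}\Delta s_{s,\tau}$ in the distributional sense, harmonicity on this open set immediately yields $\mu_{s,\tau}((D_{s,\tau})^c \setminus \{0\}) = 0$.

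To implement this, I would fix a nonzero $\lambda \in (D_{s,\tau})^c$ and a preimage $\lambda_0 \in (\overline{\Sigma}_s)^c$ under $f_{s-\tau}$ (which exists by Definition \ref{domainD}) with $f'_{s-\tau}(\lambda_0) \neq 0$. Proposition \ref{main} then gives the explicit formula (\ref{dSz}) for $\partial S/\partial \lambda$ at this $\lambda$. The right-hand side is holomorphic in $\lambda$ on a neighborhood: a local inverse $f^{-1}_{s-\tau}$ is holomorphic by the holomorphic inverse function theorem, and the integral is holomorphic in $f^{-1}_{s-\tau}(\lambda)$ because Corollary \ref{suppoutside} places $\supp(\mu) \setminus \{0\} \subset \overline{\Sigma}_s$, keeping the denominator $f^{-1}_{s-\tau}(\lambda) - \xi$ bounded away from zero. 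Using $\Delta = 4\partial_\lambda \partial_{\bar{\lambda}}$ and the Cauchy--Riemann equation $\partial_{\bar{\lambda}} (\partial_\lambda s_{s,\tau}) = 0$, I would conclude $\Delta s_{s,\tau} = 0$ at such $\lambda$, so $s_{s,\tau}$ is harmonic and the Brown measure vanishes nearby.

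Let $V \subseteq (D_{s,\tau})^c \setminus \{0\}$ denote the open set of all $\lambda$ admitting such a regular preimage. Since $f_{s-\tau}$ is holomorphic and nonconstant on the open set $(\overline{\Sigma}_s)^c$ (it tends to infinity at infinity), its critical set is discrete there, so $V$ is open and dense in $(D_{s,\tau})^c \setminus \{0\}$, with complement contained in the discrete image of the critical set. The positive measure $\Delta s_{s,\tau}$ restricted to $(D_{s,\tau})^c \setminus \{0\}$ is then supported on this discrete exceptional set, so $\mu_{s,\tau}|_{(D_{s,\tau})^c \setminus \{0\}}$ decomposes as at most a sum of atoms at isolated critical values of $f_{s-\tau}$.

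The main obstacle is ruling out these potential atoms at critical values $\lambda^*$. An atom at $\lambda^*$ would correspond to a logarithmic singularity of the subharmonic function $s_{s,\tau}$, so it suffices to establish local boundedness of $s_{s,\tau}$ near $\lambda^*$. The natural route is to argue that the holomorphic formula (\ref{dSz}) for $\partial s_{s,\tau}/\partial \lambda$ extends as a single-valued holomorphic function across the branch points of $f^{-1}_{s-\tau}$---the symmetric dependence on the various local branches yielding a single-valued expression---so that $\partial s_{s,\tau}/\partial \lambda$ remains bounded near $\lambda^*$, precluding any logarithmic blow-up of $s_{s,\tau}$. A standard removable-singularity argument for subharmonic functions with harmonic restriction to the punctured neighborhood then gives $\mu_{s,\tau}(\{\lambda^*\}) = 0$, completing the proof.
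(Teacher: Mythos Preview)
Your approach is essentially the same as the paper's: use Proposition~\ref{main} to get holomorphicity of $\partial S/\partial\lambda$ at points with a regular preimage, note that the remaining critical values form a discrete set, and then rule out atoms at those critical values by showing $\partial S/\partial\lambda$ stays bounded (so there is no logarithmic singularity in $s_{s,\tau}$).

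The only substantive difference is in how you handle the last step. You propose to argue that the formula (\ref{dSz}) extends as a single-valued holomorphic function across the branch points via ``symmetric dependence on the various local branches.'' This is vague, and there is no evident symmetry in $\lambda_0\mapsto \frac{\lambda_0}{f_{s-\tau}(\lambda_0)}\int \frac{d\mu(\xi)}{\lambda_0-\xi}$ among the sheets of $f_{s-\tau}^{-1}$ that would guarantee single-valuedness. The paper avoids this issue entirely: it simply observes that for $w$ near $\lambda^*$ one can write
\[
\frac{\partial S}{\partial\lambda}(s,\tau,w,0)=\frac{w_0}{f_{s-\tau}(w_0)}\int_0^\infty\frac{1}{w_0-\xi}\,d\mu(\xi),
\]
and lets $w_0\to\lambda_0^*$ (the critical preimage). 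Since $\lambda_0^*\in(\overline\Sigma_s)^c$ is outside $\supp(\mu)\setminus\{0\}$ by Corollary~\ref{suppoutside}, the integral stays finite and $f_{s-\tau}(w_0)\to\lambda^*\neq 0$, so the right-hand side is bounded. That already precludes the blow-up an atom would force, without needing any holomorphic extension or branch analysis. Replacing your symmetric-dependence sketch with this direct boundedness computation makes the argument complete.
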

	\begin{proof}
	For all nonzero $w\in(D_{s,\tau})^c$, Proposition \ref{main} exhibits $\partial S/\partial \lambda (s,\tau,\lambda,0)$ near $w$ as a holomorphic function, provided that there is $\lambda_0\notin\overline\Sigma_s$ with $f_{s-\tau}(\lambda_0)=\lambda$ and $f_{s-\tau}'(\lambda_0)\neq 0$. Then Brown measure of $xb_{s,\tau}$, obtained by taking a $\bar \lambda$derivative, is therefore zero near any such $\lambda$.
	
	Now suppose that $\lambda\neq 0$ is outside $D_{s,\tau}$ and that for all $\lambda_0\notin \overline\Sigma_s$ such that $f_{s,\tau}(\lambda_0)=\lambda$, we have $f_{s,\tau}'(\lambda_0)=0$. Then any one such $\lambda_0$ will be an isolated zero of $f_{s,\tau}$. By the open mapping theorem and the result in the previous paragraph, the Brown measure of $xb_{s,\tau}$ will be zero in a punctured neighborhood of $\lambda$. If the Brown measure assigned positive mass to ${\lambda}$ (but is zero in a punctured neighborhood of $\lambda$), then $S(s,\tau,\lambda,0)$ would have a logarithmic singularity at $\lambda$ and $\partial S/\partial \lambda(s,\tau,\lambda,0)$ would blow up at $\lambda$. But if we evaluate $(\ref{dSz})$ at $w$ and let $w$ tend to $\lambda$, we get 
		\begin{align*}
			\lim_{w \rightarrow \lambda} \frac{\partial }{\partial \lambda}S(s,\tau,w,0) = \lim_{w_0 \rightarrow \lambda_0}\frac{w_0}{f_{s-\tau}(w_0)}\int_0^{\infty} \frac{1}{w_0 -\xi} \,d\mu(\xi)
		\end{align*}
		which remains bounded.
		\end{proof}
	\begin{prop}\label{masszero.prop}
	If $0$ is not in $D_{s,\tau}$, the Brown measure $\mu_{s,\tau}$ of $xb_{s,\tau}$ satisfies
	 \[\mu_{s,\tau}(\{0\}) = \mu(\{0\}).\]
	\end{prop}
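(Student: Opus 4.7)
The plan is to compute the local structure of $\partial S/\partial \lambda$ near $\lambda=0$ using Proposition~\ref{main}, then read off the mass at $0$ via the distributional identity $\partial_{\bar\lambda}(1/\lambda)=\pi\delta_0$ together with the Brown-measure formula $\mu_{s,\tau}=\frac{1}{\pi}\partial_{\bar\lambda}\partial_\lambda S$.

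The first step is to verify that $f_{s-\tau}$ is a local biholomorphism at the origin. Since $0\notin D_{s,\tau}$, some $\lambda_0\in(\overline\Sigma_s)^c$ satisfies $f_{s-\tau}(\lambda_0)=0$; as the exponential factor in the definition of $f_{s-\tau}$ never vanishes when $\lambda_0\notin\supp(\mu)$, this forces $\lambda_0=0$, and in particular $0\notin\overline\Sigma_s$. By Corollary~\ref{suppoutside}, either $0\notin\supp(\mu)$ or $0$ is an isolated point mass of $\mu$, so Lemma~\ref{fat0} extends $f_{s-\tau}$ holomorphically across $0$, and a direct computation gives
\[
f'_{s-\tau}(0)=\exp\!\left[\frac{s-\tau}{2}\bigl(1-2\mu(\{0\})\bigr)\right]\neq 0.
\]

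Next, I would apply Proposition~\ref{main} to $\lambda$ in a small punctured neighborhood of $0$, which lies in $(D_{s,\tau})^c$ since the latter is open. Writing $\lambda_0=f_{s-\tau}^{-1}(\lambda)$ for the local inverse and isolating the atom of $\mu$ at $0$, the formula of Proposition~\ref{main} rearranges to
\[
\frac{\partial S}{\partial \lambda}(s,\tau,\lambda,0)=\frac{\mu(\{0\})}{\lambda}+\frac{\lambda_0}{\lambda}\int_{(0,\infty)}\frac{1}{\lambda_0-\xi}\,d\mu(\xi),
\]
after cancelling $\lambda_0$ in the first summand. The factor $\lambda_0/\lambda$ is the reciprocal of the nonvanishing holomorphic function $f_{s-\tau}(\lambda_0)/\lambda_0$, and the remaining integral is holomorphic in $\lambda_0$ near $0$ because $\supp(\mu)\setminus\{0\}$ is bounded away from $0$. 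Composing with the holomorphic local inverse $\lambda\mapsto\lambda_0$, the second summand is holomorphic in $\lambda$ near $0$.

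Finally, applying $\partial_{\bar\lambda}$ in the distributional sense annihilates the holomorphic remainder and extracts a residue of $\pi\mu(\{0\})\delta_0$ from the first term, so that $\mu_{s,\tau}=\mu(\{0\})\delta_0$ in a neighborhood of $0$, giving $\mu_{s,\tau}(\{0\})=\mu(\{0\})$. The main obstacle is justifying that Proposition~\ref{main} remains applicable as $\lambda\to 0$, which reduces to the local biholomorphism of $f_{s-\tau}$ at $0$ established in the first step via Lemma~\ref{fat0}.
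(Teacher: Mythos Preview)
Your proof is correct and follows essentially the same approach as the paper: isolate the atom of $\mu$ at $0$ in the formula from Proposition~\ref{main} to write $\partial S/\partial\lambda$ as $\mu(\{0\})/\lambda$ plus a term holomorphic near the origin, then take the distributional $\bar\lambda$-derivative. You supply a few details the paper leaves implicit (e.g., the explicit value of $f'_{s-\tau}(0)$ and the argument that the preimage of $0$ must be $0$), but the structure is the same.
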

	
	\begin{proof}
		 Note that since $f_{s-\tau}(\lambda_0)$ has a simple zero at $\lambda_0 =0$, we can construct a local inverse $f_{s,\tau}^{-1}$ near 0 having a simple zero at 0. Then $f^{-1}_{s-\tau}(\lambda)/\lambda$ has a removable zero at $\lambda=0$. Also, recall from Corollary \ref{suppoutside} that if 0 is outside $\overline\Sigma_s$, it is an isolated point in $\supp(\mu)$. In that case, we can find $c>0$ such that $(0, c) \cap \supp(\mu) = \varnothing$ and (\ref{dSz}) becomes
		\begin{equation}\label{4.14}
			\frac{\partial S}{\partial \lambda}(s,\tau,\lambda,0) = \frac{\mu(\{0\})}{\lambda} +  \frac{f^{-1}_{s-\tau}(\lambda)}{\lambda}\int_c^\infty \frac{1}{f^{-1}_{s-\tau}(\lambda) -\xi} \,d\mu(\xi).
		\end{equation} 
		Since the second term on the right-hand side of (\ref{4.14}) is holomorphic near the origin, we can take the distributional $\lambda$-bar derivative to obtain the distributional Laplacian, giving 
		\[ \mu_{s,\tau}(\{0\})= \mu(\{0\}) \]
		as claimed.
	\end{proof}
	
	\begin{prop}\label{finject.prop}
	For all $s$ and $\tau$ with $\vert s-\tau\vert\leq s$, the following results hold.
	
	\begin{enumerate}
	\item The function $f_{s-\tau}$ is injective on the complement of $\overline\Sigma_s$.
	\item The quantity $f_{s-\tau}(\lambda_0)$  tends to infinity as $\lambda_0$ tends to infinity.
	\item The set $D_{s,\tau}$ is compact.
	\item Assume that $\vert\tau-s\vert<s$, that the condition (\ref{p0infinity}) in Remark \ref{allsupport.rem} holds, and that $0\notin\partial\Sigma_s$. Then $f_{s-\tau}$ is defined and injective on the complement of $\Sigma_s$.

	\end{enumerate}
	
	\end{prop}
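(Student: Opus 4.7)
The plan is to prove the four parts in the order (2), (3), (1), (4), since (2) and (3) provide basic infrastructure used in the main argument. For part (2), I would rewrite the integrand as $(\xi+\lambda_0)/(\xi-\lambda_0) = -1 + 2\xi/(\xi-\lambda_0)$ and apply dominated convergence as $\lambda_0 \to \infty$ to conclude that the integral in Definition \ref{f.def} tends to $-1$; hence $f_{s-\tau}(\lambda_0) \sim \lambda_0 e^{-(s-\tau)/2}$, which tends to infinity. This also shows $f_{s-\tau}(\lambda)/\lambda \to e^{-(s-\tau)/2} \neq 0$ as $\lambda \to \infty$, making $f_{s-\tau}$ a local biholomorphism at $\infty$ on the Riemann sphere. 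Part (3) follows at once: $D_{s,\tau}$ is closed by Definition \ref{domainD}, and $\overline\Sigma_s$ is bounded by Proposition \ref{valuet}, so $\overline\Sigma_s^c$ contains a neighborhood of $\infty$. The image of that neighborhood under $f_{s-\tau}$ contains a neighborhood of $\infty$ in $\mathbb{C}$ (by the local biholomorphism just noted), so $D_{s,\tau}^c = f_{s-\tau}(\overline\Sigma_s^c)$ contains a neighborhood of $\infty$ and $D_{s,\tau}$ is bounded.

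Part (1) is the heart of the proposition. My strategy is a degree-theoretic argument combined with continuation in $\alpha = s-\tau$. I would first extend $f_\alpha$ to a continuous map $\overline\Sigma_s^c \cup \{\infty\} \to \mathbb{CP}^1$ sending $\infty \mapsto \infty$, and verify it is proper: properness at $\infty$ is part (2), while properness at the inner boundary $\partial\Sigma_s$ requires showing $f_\alpha(\lambda_n) \to \partial D_{s,\tau}$ whenever $\lambda_n \to \partial\Sigma_s$, which I would derive from Definition \ref{domainD} together with the open mapping theorem applied to $f_\alpha$. A proper non-constant holomorphic map between open subsets of Riemann surfaces is a finite-degree branched covering, so $f_\alpha$ has a well-defined degree onto $D_{s,\tau}^c \cup \{\infty\}$. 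This degree is locally constant in $\alpha$ for a continuous proper family, hence constant on the connected parameter set $\{|\alpha|\leq s\}$. At $\alpha=0$ we have $f_0 = \mathrm{id}$ of degree 1, so $f_\alpha$ has degree 1 throughout, yielding injectivity.

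For part (4), I would first check that $f_{s-\tau}$ extends holomorphically to $\Sigma_s^c$. Under condition \eqref{p0infinity}, Remark \ref{tboundary.rem} forces $T \equiv 0$ on $\supp(\mu)\setminus\{0\}$, so $\supp(\mu)\setminus\{0\} \subset \Sigma_s$, which makes the integral in Definition \ref{f.def} convergent on $\Sigma_s^c \setminus \{0\}$; the hypothesis $0 \notin \partial\Sigma_s$ combined with Lemma \ref{fat0} then handles any extension at the origin. Hence $f_{s-\tau}$ is holomorphic on an open neighborhood of $\Sigma_s^c$. Injectivity on $\Sigma_s^c$ upgrades that of part (1) by a boundary argument: given distinct $\lambda_1,\lambda_2 \in \Sigma_s^c$ with $f_{s-\tau}(\lambda_1)=f_{s-\tau}(\lambda_2)=w$, choose disjoint small neighborhoods $W_1,W_2$ in the open domain of holomorphy; the images $f_{s-\tau}(W_i)$ are open by the open mapping theorem and both contain $w$, so near $w$ one locates preimages $z_i\in W_i\cap\overline\Sigma_s^c$ (using that $\overline\Sigma_s^c$ is dense in $\Sigma_s^c$ together with the locally biholomorphic structure of $f_{s-\tau}$ away from its discrete ramification set) with $f_{s-\tau}(z_1)=f_{s-\tau}(z_2)$, contradicting part (1).

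The main obstacle will be part (1), specifically two delicate items. First, verifying properness at $\partial\Sigma_s$ requires ruling out that some interior sequence $\lambda_n \to \partial\Sigma_s$ maps under $f_\alpha$ to a limit in the interior of $D_{s,\tau}^c$; here the Jacobian calculation of Proposition \ref{6.2}, which links $f'_{s-\tau}(\lambda_0)$ to the invertibility of the flow map $\phi_\tau$, is likely indispensable for controlling branch points and boundary behavior. Second, the topology of $\overline\Sigma_s$ can change with $s$, as Figure \ref{fig:first} shows, so $\overline\Sigma_s^c$ may be multiply connected and the covering-space analysis must be carried out with care rather than reduced to a simple-connectedness argument.
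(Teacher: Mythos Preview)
Your treatment of parts (2) and (3) is correct and matches the paper. The difficulty is part (1), where your degree-theoretic strategy has a genuine gap at the properness step.

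You write that properness at $\partial\Sigma_s$ would ``derive from Definition~\ref{domainD} together with the open mapping theorem.'' But this is circular. Suppose $\lambda_n\to\lambda^*\in\partial\Sigma_s$ and $f_\alpha(\lambda_n)\to w$. If $f_\alpha$ extends continuously to $\lambda^*$ (as it does whenever $\lambda^*\notin\supp(\mu)$), then $w=f_\alpha(\lambda^*)$. Nothing in Definition~\ref{domainD} prevents $w$ from lying in $D_{s,\tau}^c$: that would simply mean there is some \emph{other} point $\lambda'\in\overline\Sigma_s^c$ with $f_\alpha(\lambda')=w$. A failure of properness at the boundary is thus exactly a failure of injectivity (of the extended map), so you cannot use properness to deduce injectivity. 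The open mapping theorem only tells you the image is open, which is already built into the definition of $D_{s,\tau}$. (Incidentally, even if properness were granted, the continuation in $\alpha$ is unnecessary: since $f_\alpha(\lambda)/\lambda\to e^{-\alpha/2}$, the map has a simple pole at $\infty$, so a proper holomorphic map would already have global degree equal to the local degree $1$ at $\infty$.)

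The paper bypasses all of this with an argument coming from the PDE. Once the Brown measure is known to vanish on $D_{s,\tau}^c\setminus\{0\}$ (Proposition~\ref{brownzero.prop}), the function $s(s,\tau,\lambda)=S(s,\tau,\lambda,0)$ is well defined there and $\partial s/\partial\lambda$ is a single-valued function of $\lambda$. Letting $\varepsilon_0\to0$ in the second Hamilton--Jacobi formula (\ref{spz}) and using (\ref{4.6}), (\ref{4.8}) gives
\[
\lambda(\tau)\,\frac{\partial s}{\partial\lambda}\bigl(s,\tau,\lambda(\tau)\bigr)=\lambda(\tau)p_\lambda(\tau)=\lambda_0\, p_{\lambda,0}.
\]
Hence if $f_{s-\tau}(\lambda_0)=f_{s-\tau}(\tilde\lambda_0)$, the left side is the same for both initial conditions, so $\lambda_0 p_{\lambda,0}=\tilde\lambda_0\tilde p_{\lambda,0}$. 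But by the computation in Lemma~\ref{zfz0},
\[
f_{s-\tau}(\lambda_0)=\lambda_0\exp\Bigl[\tfrac{\tau-s}{2}(2\lambda_0 p_{\lambda,0}-1)\Bigr],
\]
so the exponential factors agree and therefore $\lambda_0=\tilde\lambda_0$. Injectivity drops out in one line, with no topology required.

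For part (4) the paper's argument is also simpler than your boundary-perturbation idea. Since $f_{s-\tau}$ depends only on the difference $s-\tau$, one applies part (1) with parameters $(s-\varepsilon,\tau-\varepsilon)$: the \emph{same} map $f_{s-\tau}$ is then injective on the larger set $\overline\Sigma_{s-\varepsilon}^c$, and under hypothesis (\ref{p0infinity}) together with Remark~\ref{tboundary.rem} one has $\partial\Sigma_s\subset\overline\Sigma_{s-\varepsilon}^c$. Your argument, by contrast, needs the two open sets $f_{s-\tau}(W_i\cap\overline\Sigma_s^c)$ to intersect, which does not follow merely from both having $w$ in their closures.
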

	
	Point 4 of the proposition says that (under the stated assumptions), the injectivity of $f_{s-\tau}$ in Point 1 extends to the boundary of $\Sigma_s$. The assumption $\vert\tau-s\vert<s$ in Point 4 cannot be omitted. Consider, for example, the case $\tau=0$ (so that $\vert\tau-s\vert=s$) and $x=1$. In that case, the map $f_{s-\tau}=f_s$ has already been studied in \cite{bianejfa} and \cite{dhk} and maps the boundary of $\Sigma_s$ in a generically two-to-one fashion to the unit circle. Specifically, points on the boundary with the same argument but different radii have the same value of $f_s$. See Figure \ref{2domains1.fig} in Section \ref{tauzero.sec} along with the discussion surrounding Proposition 2.5 in \cite{dhk}.
	
	\begin{proof}
	Now that the Brown measure of $xb_{s,\tau}$ is known (Proposition \ref{brownzero.prop}) to be zero outside $D_{s,\tau}$, we know that $\partial S/\partial \lambda(s,\tau,\lambda,0)$ is holomorphic outside $D_{s,\tau}$. It is then possible to use Proposition \ref{main} to show that $f_{s,\tau}'$ is nonzero outside $D_{s,\tau}$, showing that $f_{s,\tau}$ is \emph{locally} injective.  The argument is that if $f_{s,\tau}'(\lambda_0)=0$, then $\partial S/\partial \lambda$ would blow up at $f_{s,\tau}(\lambda_0)$. We omit the details because we will prove global injectivity by a different method. 
		
	To obtain global injectivity, we use an argument similar to Proposition 5.1 in \cite{pz}, by considering the second Hamilton--Jacobi formula (\ref{spz}). The argument is briefly as follows. We know from Proposition \ref{6.3} that $S(s,\tau,\lambda,\varepsilon)$ has an analytic extension to $\varepsilon=0$, for $\lambda\in(D_{s,\tau})^c$. If there were two different initial conditions $\lambda_0$ and $\tilde \lambda_0$ giving the same value of $f_{s-\tau}$---and thus, by Lemma \ref{zfz0}, the same value of $\lambda(\tau)$ at $\varepsilon_0=0$---we would get two different values for $\partial S/\partial \lambda(s,\tau,\lambda,0)$ at the same $\lambda$. 
	
	Filling in the details, we use the notation $$s(s,\tau,\lambda)=S(s,\tau,\lambda,0).$$ We first note that, in the case $0\notin\overline\Sigma_s$, we have from Definition \ref{f.def} that $f_{s-\tau}(\lambda_0)=0$ if and only if $\lambda_0=0$. 
	Using Proposition \ref{6.3}, we can let $\varepsilon$ tend to 0 in the second Hamilton--Jacobi formula (\ref{spz}). We therefore obtain		
		\begin{align*}
			\lambda(\tau)\frac{\partial s}{\partial \lambda}\left(\lambda(\tau),\tau\right)&= \lambda(\tau)p_\lambda(\tau) = \lambda_0 p_{\lambda,0},
		\end{align*}
		where the second equality follows from the explicit formulas (\ref{4.6}) and (\ref{4.8}).
		
		Now, if two initial conditions $\lambda_0$ and  $\tilde{\lambda}_0$ give the same value of $f_{s-\tau} = \lambda(\tau)$, then
		
		\begin{equation}\label{z0pz0}
		\lambda_0p_{\lambda,0} = \lambda(\tau)\frac{\partial s}{\partial \lambda}\left(\lambda(\tau),\tau\right) =\tilde{\lambda_0} \tilde{p}_{\lambda,0}.
		\end{equation}
		But then by the calculation in the proof of Lemma \ref{zfz0}, we have
		\begin{align*}
		\lambda_0\exp\left[\frac{\tau-s}{2}\left(2\lambda_0 p_{\lambda,0}-1\right)\right]  &=f_{s-\tau}(\lambda_0) \\
		&= f_{s-\tau}(\tilde{\lambda}_0) \\
		&= \tilde{\lambda}_0\exp\left[\frac{\tau-s}{2}\left(2\tilde{\lambda}_0 \tilde{p}_{\lambda,0}-1\right)\right].
		\end{align*}
		But since, by (\ref{z0pz0}), $\lambda_0p_{\lambda,0}=\tilde \lambda_0\tilde p_{\lambda,0}$, we must have $\lambda_0=\tilde \lambda_0$.
				
		Next, we have that 
		\[\lim_{\lambda_0\rightarrow \infty} \int_0^{\infty} \frac{\xi+\lambda_0}{\xi-\lambda_0} \,d\mu(\xi) = -1.\]
		That is 
		\begin{align*}
			\lim_{\lambda_0\rightarrow \infty} f_{s-\tau}(\lambda_0) = \lim_{\lambda_0\rightarrow \infty} \lambda_0 \exp\left[\frac{\tau-s}{2}\right] = \infty,
		\end{align*}
		as claimed. 
		
		Thus, $f_{s-\tau}$ extends to a holomorphic function on a neighborhood of $\infty$ on the Riemann sphere, mapping $\infty$ to $\infty$. Then by the open mapping theorem, the image of $f_{s-\tau}$ contains a neighborhood of $\infty$. Since, also, $\Sigma_s$ is bounded by Point 3 of Proposition \ref{valuet}, we see that $D_{s,\tau}$ (as defined in Definition \ref{domainD}) is a closed and bounded (i.e., compact) set.
		
		We now address the last point in the proposition. If the condition (\ref{p0infinity}) holds, then by Remark \ref{allsupport.rem}, $T\equiv 0$ on $\supp(\mu)\setminus\{0\}$, so that $\supp(\mu)\setminus\{0\}$ is contained in $\Sigma_r$ for all $r>0$. Thus, by Lemma \ref{fat0}, the domain of definition of $f_{s-\tau}$ contains the complement of $\Sigma_s$. Now, since $\vert\tau-s\vert<s$, we have
		\[
		\vert(\tau-\varepsilon)-(s-\varepsilon)\vert=\vert\tau-s\vert<s-\varepsilon
		\]
		for sufficiently small $\varepsilon$. Thus, we can apply Point 1 of the proposition with $(s,\tau)$ replaced by $(s-\varepsilon,\tau-\varepsilon)$ to conclude that $f_{s-\tau}=f_{(s-\varepsilon)-(\tau-\varepsilon)}$ is injective on $(\overline\Sigma_{s-\varepsilon})^c$. But Remark \ref{tboundary.rem} tells us that every nonzero point in $\overline\Sigma_{s-\varepsilon}$ is in $\Sigma_s$ (and thus, not in $\partial\Sigma_s$), under the condition (\ref{p0infinity}). Thus, assuming $0\notin\partial\Sigma_s$, we find that injectivity of $f_{s-\tau}$ extends to the boundary of $\Sigma_s$.
	\end{proof}
	
	\begin{proof}[Proof of Theorem \ref{main2}]
	The theorem follows from Propositions \ref{main}, \ref{brownzero.prop}, \ref{masszero.prop}, and \ref{finject.prop}.
	\end{proof}

	\section{The boundary case $\tau =0$}\label{tauzero.sec}
	
	Recall from Definition \ref{def b_st} that a free multiplicative Brownian motion $b_{s,\tau}$ is defined for $|s-\tau|\leq s$. In this section, we focus on the borderline case of $\tau =0$. Recall also from the discussion after Definition \ref{def b_st} that $b_{s,0}$ is the free unitary Brownian motion $U_s$ considered by Biane in \cite{bia}. Thus we study $xb_{s,0}$ where $x$ is non-negative operator and freely independent of $b_{s,0}$. The case in which $x=cI$ is special in that $xb_{s,0}$ is $c$ times a unitary operator, so that the Brown measure of $xb_{s,0}$ is supported on a circle of radius $c$ centered at the origin.
	
Note that the domain $D_{s,0}$ from Definition \ref{domainD} when $\tau =0$ is defined by the relation
	\[
	(D_{s,0})^c = f_{s}((\overline{\Sigma}_s)^c)\]
With $\tau=0$, results from Theorem \ref{main2} can be stated as follows.

	\begin{theorem}
		For a fixed $s > 0$, we have
		\begin{itemize}
			\item The map $f_{s}: (\overline{\Sigma}_s)^c \rightarrow (D_{s,0})^c$ is injective.
			\item The Brown measure $\mu_{s,0}$ of $xb_{s,0}$ is zero outside $D_{s,0}$ except possibly at the origin.
		\end{itemize}
	\end{theorem}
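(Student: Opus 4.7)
The plan is to recognize that this theorem is simply the specialization of Theorem \ref{main2} to the boundary case $\tau = 0$, so that essentially no new work is required. First I would verify that $\tau = 0$ satisfies the admissibility condition $|\tau - s| \leq s$: indeed $|0 - s| = s$, placing us exactly on the boundary of the allowed parameter region. Thus Theorem \ref{main2} applies without modification.

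For the first bullet, Point (1) of Theorem \ref{main2} states that $f_{s-\tau}$ is a bijection from $(\overline{\Sigma}_s)^c$ onto $(D_{s,\tau})^c$; taking $\tau = 0$ gives that $f_s$ is a bijection, and in particular injective, from $(\overline{\Sigma}_s)^c$ to $(D_{s,0})^c$. (This recovers in our generality the transform introduced by Biane \cite{bianejfa} in the special case $x = I$.) For the second bullet, Point (3) of Theorem \ref{main2} directly gives that $\mu_{s,0}$ vanishes outside $D_{s,0}$, except possibly at the origin.

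The only potential subtlety is whether the $\varepsilon_0 \to 0$ Hamilton--Jacobi analysis of Section 4 remains valid at the boundary $|\tau - s| = s$, since the underlying PDE (\ref{pdeS}) is formulated for $|\tau - s| < s$. However, this issue has already been absorbed into the main development: Theorem \ref{thm4.6} extends the Hamilton--Jacobi formulas by continuity to the closed region $|\tau - s| \leq s$, and the subsequent results (Propositions \ref{6.2}--\ref{finject.prop}) are stated under the same non-strict inequality. So the proof reduces to a one-line invocation of Theorem \ref{main2}, and the only item worth flagging is that the stronger injectivity claim on the \emph{closed} complement $(\Sigma_s)^c$ is known to fail in the prototype $x = I$ (as pointed out in the paragraph following Proposition \ref{finject.prop}). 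The theorem wisely restricts to the open complement $(\overline{\Sigma}_s)^c$, so this obstacle never enters.
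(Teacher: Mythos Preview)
Your proposal is correct and matches the paper's treatment exactly: the paper presents this theorem simply as a restatement of Theorem \ref{main2} specialized to $\tau=0$, with no separate proof given. Your additional remarks about the boundary-continuity issue and the failure of closed-complement injectivity are accurate and consistent with the paper's own discussion.
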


	Demni and Hamdi \cite{dem} carry out a PDE analysis for the regularized Brown measure of $xb_{s,0}$, culminating in a formula \cite[Theorem 1.1]{dem} analogous to our Theorem \ref{thm4.6}. In determining the support of the Brown measure in, however, they specialize to the case in which $x$ is a self-adjoint projection. See Theorem 1.2 in \cite{dem}, where the authors are computing the Brown measure of the operator $xb_{s,0}x$ in the compressed von Neumann algebra $x\mathcal A x$, which is the same as the Brown measure of $xb_{s,0}$ in $\mathcal A$, after removing an atom at the origin and multiplying by a constant. Our results therefore generalize \cite{dem} by treating arbitrary non-negative $x$ and arbitrary $\tau$ with $\vert \tau-s\vert\leq s$.
	
	Demni and Hamdi first identify a domain $\Sigma_{t,\alpha}$, where $\alpha$ is the trace of the projection $x$, which they show is bounded by a Jordan curve. Then they define another domain $\Omega_{t,\alpha}$ whose boundary is the image of the boundary of $\Sigma_{t,\alpha}$ under a map $f_{t,\alpha}$. Now, the map $f_{t,\alpha}$ in \cite[Theorem 1.2]{dem} is easily seen to be what we call $f_t$. Meanwhile, the domain $\Sigma_{t,\alpha}$ in \cite[Proposition 3.2]{dem} is defined by the condition $T_\alpha(\lambda_0)<t$. Thus, if we can verify that their function $T_\alpha$ agree with our $T$, then we will see that their $\Sigma_{t,\alpha}$ equals our $\Sigma_t$ and thus that their $\Omega_{t,\alpha}$ is the interior of our $D_{t,0}$. Thus, their Theorem 1.2 will agree with Point 3 of our Theorem \ref{main2}. (Recall that Demni and Hamdi remove the mass of the Brown measure at the origin.)
			
	\begin{figure}[t]
	\centering
	\includegraphics[scale=0.5]{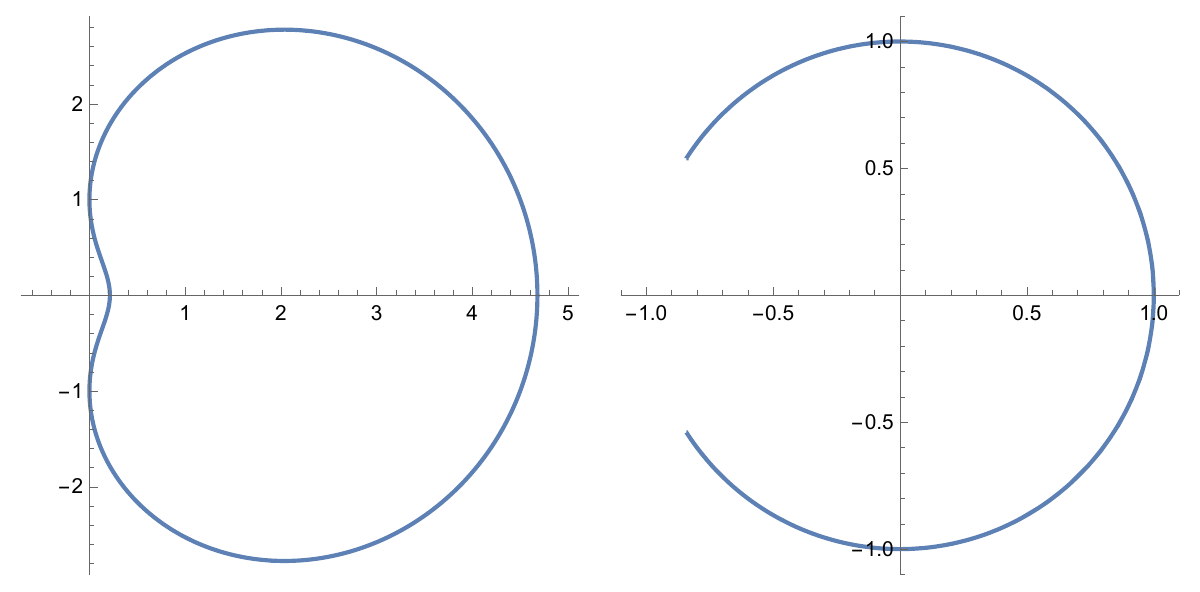}
	\caption{The domains $\overline\Sigma_s$ (left) and $D_{s,0}$ (right) with $s=2$ and $\mu=\delta_1$}
	\label{2domains1.fig}
	\end{figure}
	
	\begin{figure}[t]
	\centering
	\includegraphics[scale=0.5]{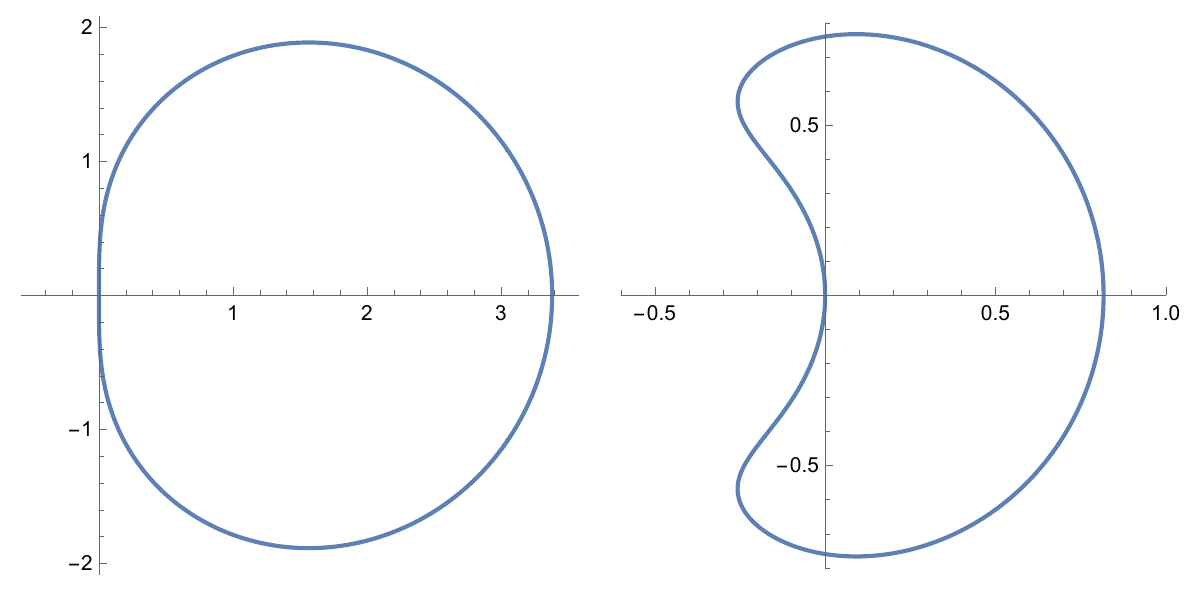}
	\caption{The domains $\overline\Sigma_s$ (left) and $D_{s,0}$ (right) with $s=2$ and $\mu=0.5\,\delta_0+0.5\,\delta_1$}
	\label{2domains2.fig}
	\end{figure}

	As depicted in Figures \ref{2domains1.fig} and \ref{2domains2.fig}, the domain $D_{s,0}$ collapses onto the unit circle in the case when $x$ is the identity operator (i.e., $\mu=\delta_1$), while no such collapse occurs when $x$ is not a multiple of the identity (i.e., $\mu$ is not concentrated at a single point).
	
	\begin{prop}
		The function $T_\alpha$ in \cite[Proposition 2.10]{dem} agrees with our function $T$, when $x$ is a self-adjoint projection with trace $\alpha$. 
	\end{prop}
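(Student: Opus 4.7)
The plan is to carry out a direct computation: specialize our general formula for $T(\lambda_0)$ to the case where $\mu$ is the spectral distribution of a self-adjoint projection, and then match it term-by-term with the formula for $T_\alpha$ that appears in Proposition 2.10 of \cite{dem}.

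First, since $x$ is a self-adjoint projection with trace $\alpha$, its spectral distribution is
\[
\mu = (1-\alpha)\delta_0 + \alpha\delta_1.
\]
I would substitute this $\mu$ directly into the definitions (\ref{pktildedef}) of $\tilde p_0$ and $\tilde p_2$, obtaining the explicit expressions
\[
\tilde p_0(\lambda_0) = \frac{1-\alpha}{|\lambda_0|^2} + \frac{\alpha}{|\lambda_0-1|^2},\qquad \tilde p_2(\lambda_0) = \frac{\alpha}{|\lambda_0-1|^2}.
\]
In particular, writing $\lambda_0 = r_0 e^{i\theta_0}$, we get
\[
\tilde p_0 r_0^2 = (1-\alpha) + \frac{\alpha r_0^2}{|\lambda_0-1|^2}.
\]

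Next, I would plug these expressions into Definition \ref{t.def} to obtain a fully explicit formula for $T(\lambda_0)$ purely in terms of $\alpha$, $r_0$, and $|\lambda_0-1|^2 = 1 + r_0^2 - 2r_0\cos\theta_0$. Both cases ($\tilde p_0 r_0^2 \neq \tilde p_2$ and $\tilde p_0 r_0^2 = \tilde p_2$) should be treated, although the second case is automatic from the first by continuity (removable singularity). The resulting expression should be a closed-form function of $r_0$, $\theta_0$, and $\alpha$.

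The third step is to unpack the definition of $T_\alpha$ from \cite[Proposition 2.10]{dem}. Here the main obstacle is notational reconciliation: Demni and Hamdi parameterize things in terms adapted to the compression $x\mathcal{A}x$ and work directly with the projection trace $\alpha$, so their quantities live in slightly different variables than ours. I would rewrite their $T_\alpha(\lambda_0)$ using the same $r_0, \theta_0$ used above, then verify that the ratio of log-differences they obtain coincides term-for-term with the expression derived in the previous step. Because both formulas arise from analyzing lifetimes of characteristics of the same underlying Hamilton--Jacobi equation at a common initial datum, they must agree; the verification is thus expected to be a routine (if slightly tedious) algebraic identity, with the only subtlety being the bookkeeping needed to translate between Demni--Hamdi's variables and ours. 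Once this identification is made, the equality of the two domains $\Sigma_{t,\alpha} = \Sigma_t$ (and hence $\Omega_{t,\alpha} = \operatorname{int}(D_{t,0})$) follows immediately, confirming that our Theorem \ref{main2} restricted to $\tau=0$ and $x$ a projection recovers \cite[Theorem 1.2]{dem}.
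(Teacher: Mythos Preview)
Your plan is correct and would work, but the paper takes a slightly cleaner route that avoids specializing $\mu$ altogether. Rather than computing $\tilde p_0$ and $\tilde p_2$ explicitly for $\mu=(1-\alpha)\delta_0+\alpha\delta_1$ and then matching against Demni--Hamdi's expression, the paper first translates Demni--Hamdi's formula for $T_\alpha$ directly into the paper's general notation, obtaining
\[
T_\alpha=\frac{1}{1-\tilde p_\rho}\log\!\left(1+\frac{1-\tilde p_\rho}{r_0^2\,\tilde p_0}\right),
\]
and then verifies the identity $1-\tilde p_\rho=\tilde p_2-\tilde p_0 r_0^2$, which holds for \emph{any} $\mu$ (it is just the computation $|x-\lambda_0|^2-2r_0^2+2xr_0\cos\theta=x^2-r_0^2$ inside the trace). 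Once this identity is in hand, $T_\alpha$ collapses to $\dfrac{\log(\tilde p_2)-\log(\tilde p_0 r_0^2)}{\tilde p_2-\tilde p_0 r_0^2}=T$ immediately. The advantage of the paper's approach is that it isolates a single structural identity and never needs the explicit form of $\mu$; your approach trades that for a concrete but more laborious term-by-term check. Either way the notational reconciliation with \cite{dem} is the only real work.
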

	 \begin{proof}	 	
	 	By Proposition 2.10 in \cite{dem}, we have that 
	 	
	 	\begin{align*}
	 		T_\alpha &= \frac{1}{2\dot{v}(0)}\log\left(1+ \frac{2\dot{v}(0)}{r_0^2\tau(q_0)}\right) \\
	 		&= \frac{1}{1-\tilde{p}_\rho}\log\left(1+ \frac{1-\tilde{p}_\rho}{r_0^2\tilde{p}_0}\right)
	 	\end{align*}
	 	where the first line uses the notation in \cite{dem} and the second line uses our notation from Section 3, where $\tilde{p}_\rho$ is the value of  $p_\rho$ at $\varepsilon_0=0$. Thus we only need to show that 
	 	\[1-\tilde p_\rho = \tilde{p}_2 - \tilde{p}_0r_0^2.\]
	 	But from (\ref{prho}), we have
	 	
	 	\begin{align*}
	 		1- \tilde{p}_\rho &= \tr\left[\frac{|x-\lambda_0|^2 - 2r^2_0 + 2xr_0\cos\theta}{|x-\lambda_0|^2}\right] \\
	 		&= \tr\left[\frac{x^2 - 2xr_0\cos\theta + r_0^2 - 2r^2_0 + 2xr_0\cos\theta}{|x-\lambda_0|^2}\right] \\
	 		&= \tilde{p}_2 - \tilde{p}_0r_0^2.
	 	\end{align*}
		and the proof is complete.
	 \end{proof}
	 
	 \subsection*{Acknowledgments} The authors thank the referee for a careful reading of the paper, which has led to several substantial improvements. 
	
%	Finally, for the generalized domain $\Omega_{t,\alpha}$ in \cite{dem}, they mapped the boundary of the above domain $\Sigma_{t,\alpha}$ under the same transformation $f_{t,\alpha}$ we mentioned in Definition 4.1. Hence we have that when $\tau =0$ the cloesd supports we get are the same as in \cite{dem}.
	\bibliographystyle{abbrv} 
	% The standard abbrvnat style should be acceptable. Also provided with both the advanced and standard
	\bibliography{mybib} 
\end{document}